\DeclareMathSymbol{\subsetneqq}{\mathbin}{AMSb}{36}
\newcommand{\R}{\mathbb{R}}
\newcommand{\N}{\mathbb{N}}
\newcommand{\C}{\mathbb{C}}
\newcommand{\beq}{\begin{eqnarray}}
\newcommand{\eeq}{\end{eqnarray}}
\newcommand{\bq}{\begin{equation}}
\newcommand{\eq}{\end{equation}}
\newcommand{\beqn}{\begin{eqnarray*}}
\newcommand{\eeqn}{\end{eqnarray*}}
\newcommand{\bex}{\begin{exo}}
\newcommand{\eex}{\end{exo}}
\newcommand{\ben}{\begin{enumerate}}
\newcommand{\een}{\end{enumerate}}
\newtheorem{th1}{{\bf Theorem}}[section]
\newtheorem{thm}[th1]{{\bf Theorem}}
\newtheorem{lem}[th1]{{\bf Lemma}}
\newtheorem{prop}[th1]{{\bf Proposition}}
\newtheorem{rem}[th1]{\bf Remark}
\newtheorem{defi}[th1]{\bf Definition}
\author[T. Saanouni]{T. Saanouni}
\address{University Tunis El Manar,
Faculty of Sciences of Tunis, LR03ES04 partial differential equations and applications, 2092 Tunis, Tunisia.}
\email{\sl Tarek.saanouni@ipeiem.rnu.tn}
\subjclass{35Q55}
\keywords{Nonlinear Schr\"odinger system, unbounded potential, ground state, global well-posedness, blow-up, stability.}
\title[coupled NLS with potential]{On coupled nonlinear Schr\"odinger equations with harmonic potential}
\date{\today}
\begin{document}
\begin{abstract}
The initial value problem for some coupled nonlinear Schr\"odinger system with unbounded potential is investigated. In the defocusing case, global well-posedness is obtained. For the focusing sign, existence of global and non global solutions is discussed via potential well method. Moreover, existence of ground state and instability of standing waves are proved.
\end{abstract}
\maketitle
\tableofcontents
\vspace{ 1\baselineskip}
\renewcommand{\theequation}{\thesection.\arabic{equation}}
\section{Introduction}
Consider the initial value problem for a Schr\"odinger system with power-type nonlinearities
\begin{equation}\label{S}
\left\{
\begin{array}{ll}
i\dot u_j +\Delta u_j- |x|^2u_j-\mu\displaystyle\sum_{k=1}^{m}a_{jk}|u_k|^p|u_j|^{p-2}u_j=0 ;\\
u_j(0,x)= \psi_{j}(x),
\end{array}
\right.
\end{equation}
where $u_j: \R \times \R^N \to \C$ for $j\in[1,m]$, $\mu=\pm1$ and $a_{jk} =a_{kj}$ are positive real numbers.\\
The m-component coupled nonlinear Schr\"odinger system with power-type nonlinearities
\begin{eqnarray*}(CNLS)_p\quad
i\dot u_j +\Delta u_j= \pm \displaystyle\sum_{k=1}^{m}a_{jk}|u_k|^p|u_j|^{p-2}u_j ,
\end{eqnarray*}
arises in many physical problems such as nonlinear optics and Bose-Einstein condensates. It models physical systems in which the field has more than one component. In nonlinear optics \cite{ak} $u_j$ denotes the $j^{th}$ component of the beam in Kerr-like photo-refractive media. The coupling constant $a_{jk}$ acts to the interaction between the $j^{th}$ and the $k^{th}$ components of the beam. This system arises also in the Hartree-Fock theory for a two component Bose-Einstein condensate. 
 Readers are referred, for instance, to \cite{Hasegawa, Zakharov} for the derivation and applications of this system.\\
Well-posedness issues in the energy space of $(CNLS)_p$ were recently investigated by many authors \cite{saa1,saa2,ntds}.\\
A solution ${\bf u}:= (u_1,...,u_m)$ to \eqref{S} formally satisfies respectively conservation of the mass and the energy
\begin{gather*}
M(u_j):= \displaystyle\int_{\R^N}|u_j(x,t)|^2\,dx = M(\psi_{j});\\
E({\bf u}(t)):= \frac{1}{2}\displaystyle \sum_{j=1}^{m}\displaystyle\int_{\R^N}\Big(|\nabla u_j(t)|^2+|xu_j(t)|^2+ \frac{\mu}{p}\displaystyle \sum_{k=1}^{m}a_{jk}\displaystyle  |u_j(t)u_k(t)|^p\Big)\,dx = E({\bf u}(0)).
\end{gather*}
If $\mu =1,$ the energy is always positive and we say that the problem \eqref{S} is defocusing, otherwise it is focusing.\\

Before going further let us recall some historic facts about this problem. The one component model case given by a pure power nonlinearity is of particular interest. The question of well-posedness in the energy space was widely investigated. We denote for $p>1$ the Schr\"odinger problem
$$(NLS)_p\quad i\dot  u+\Delta u-|x|^2u\pm u|u|^{p-1}=0,\quad u:{\mathbb R}\times{\mathbb R}^N\rightarrow{\mathbb C}.$$
For $1<p<\frac{N+2}{N-2}$ if $N\geq 3$ and $1<p<\infty$ if $N\in\{1,2\}$, local well-posedness in the conformal space was established \cite{o,Cas}.
By \cite{rm}, when $p<1+\frac4N$ or $p\geq 1+\frac4N$ with a defocusing nonlinearity, the solution to the Cauchy problem \eqref{S} exists globally. For $p = 1 +\frac4N$, there exists a sharp condition \cite{zh} to the global existence for the Cauchy problem \eqref{S}. When $p >1+\frac4N$, the solution to the
Cauchy problem \eqref{S} blows up in a finite time for a class of sufficiently large data and globally exists for a class of sufficiently small data \cite{rm1,rm2,tw}.\\

In two space dimensions, similar results about global well-posedness and instability of the Schr\"odinger equation with harmonic potential and exponential nonlinearity exist \cite{T5}.\\

Intensive work has been done in the last few years about coupled Schr\"odinger systems \cite{ntds,w,mz,T2}. These works have been mainly on 2-systems or with small couplings. Moreover, most works treat the focusing case by considering the stationary associated problem \cite{AC2,xs,hs,AC3,AC4}. Despite the partial progress made so far, many difficult questions remain open and little is known about m-systems for $m\geq 3$.\\

In this note, we combine in some meaning the two problems $(NLS)_p$ and $(CNLS)_p.$ Thus, we have to overcome two difficulties. The first one is the presence of a potential term and the second is the existence of coupled nonlinearities. \\

It is the purpose of this manuscript to obtain global well-posedness of \eqref{S} in the defocusing sign. In the focusing case, using potential well method \cite{ps}, we discuss global and non global existence of solutions, via existence of ground state. Moreover, strong instability of standing waves is proved.\\

The rest of the paper is organized as follows. The next section contains the main results and some technical tools needed in the sequel. The third and fourth sections are devoted to prove well-posedness of \eqref{S}. In section five, existence of ground state is established. The sixth section contains a discussion of global and non-global existence of solutions via potential well method. The last section is devoted to obtaining strong instability of standing waves. In appendix, we give a proof of the Virial identity.\\
Denoting $H^1(\R^N)$ the usual Sobolev space, define the conformal space
$$\Sigma:=\{u\in H^1({\R^N})\quad\mbox{s. t}\quad \int_{\R^N}|x|^2|u(x)|^2\,dx<\infty\}$$
 endowed with the complete norm 
$$ \|u\|_{\Sigma} := \Big(\|u\|_{L^2(\R^N)}^2+\|xu\|_{L^2(\R^N)}^2 + \|\nabla u\|_{L^2(\R^N)}^2\Big)^\frac12$$
and the product space
$$H:=\Sigma\times...\times\Sigma =[\Sigma]^m.$$
Denote the real numbers 
 $$p_*:=1+\frac2N\quad\mbox{ and }\quad p^*:=\left\{
\begin{array}{ll}
\frac{N}{N-2}\quad\mbox{if}\quad N>2;\\
\infty\quad\mbox{if}\quad N=2.
\end{array}
\right.$$
We mention that $C$ will denote a
constant which may vary
from line to line and if $A$ and $B$ are non negative real numbers, $A\lesssim B$  means that $A\leq CB$. For $1\leq r\leq\infty$ and $(s,T)\in [1,\infty)\times (0,\infty)$, we denote the Lebesgue space $L^r:=L^r({\mathbb R}^N)$ with the usual norm $\|\,.\,\|_r:=\|\,.\,\|_{L^r}$, $\|\,.\,\|:=\|\,.\,\|_2$ and 
$$\|u\|_{L_T^s(L^r)}:=\Big(\int_{0}^{T}\|u(t)\|_r^s\,dt\Big)^{\frac{1}{s}},\quad \|u\|_{L^s(L^r)}:=\Big(\int_{0}^{+\infty}\|u(t)\|_r^s\,dt\Big)^{\frac{1}{s}}.$$
For simplicity, we denote the usual Sobolev Space $W^{s,p}:=W^{s,p}({\mathbb R}^N)$ and $H^s:=W^{s,2}$. If $X$ is an abstract space $C_T(X):=C([0,T],X)$ stands for the set of continuous functions valued in $X$ and $X_{rd}$ is the set of radial elements in $X$, moreover for an eventual solution to \eqref{S}, we denote $T^*>0$ it's lifespan.
\section{Main results and background}
In what follows, we give the main results and some estimates needed in the sequel.
\subsection{Main results}
First, local well-posedness of the Schr\"odinger problem \eqref{S} is claimed.
\begin{thm}\label{existence}
Let $ N\in[2,6]$ and $ \Psi:=(\psi_1,..,\psi_m) \in H$. Assume that $ 1< p \leq p^*$ if $3\leq N\leq6$ and $ 1< p< p^*$ if $N=2$. Then, there exist $T^*>0$ and a unique maximal solution to \eqref{S},
$$ {\bf u} \in C ([0, T^*), H).$$ Moreover,
\begin{enumerate}
\item ${\bf u}\in \big(L^{\frac{4p}{N(p-1)}}([0, T^*], W^{2,2p})\big)^{(m)};$
\item ${\bf u}$ satisfies conservation of the energy and the mass;
\item $T^*=\infty$ in the defocusing subcritical case $(\mu=1, 1<p<p^*)$.
\end{enumerate}
\end{thm}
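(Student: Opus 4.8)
The plan is to solve the Duhamel formulation of \eqref{S} by a fixed point argument built on Strichartz estimates for the harmonic oscillator propagator. Set $\mathcal H:=-\Delta+|x|^2$ and let $(e^{-it\mathcal H})_t$ be the unitary group it generates on $L^2$; a solution of \eqref{S} is a fixed point of
\[
\Phi_j({\bf u})(t):=e^{-it\mathcal H}\psi_j-i\mu\int_0^t e^{-i(t-s)\mathcal H}\sum_{k=1}^m a_{jk}|u_k(s)|^p|u_j(s)|^{p-2}u_j(s)\,ds,
\]
for $1\le j\le m$. I would run the contraction in a ball of the complete space $C_T(H)\cap\big(L^q_T(W^{2,2p})\big)^{(m)}$, with $q=\frac{4p}{N(p-1)}$, whose radius is fixed by $\|\Psi\|_H$ and for $T>0$ small. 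The spatial exponent $2p$ is natural because the nonlinearity has homogeneity $2p-1$, and $(q,2p)$ is admissible since $\frac2q=N(\frac12-\frac1{2p})$; membership in this mixed-norm space is precisely item (1).

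Two linear facts drive the estimates. First, $\Sigma=D(\mathcal H^{1/2})$ is invariant under $e^{-it\mathcal H}$, so the free flow keeps the data in $H$; and since the conjugated operators $e^{it\mathcal H}xe^{-it\mathcal H}$ and $e^{it\mathcal H}\nabla e^{-it\mathcal H}$ are $\cos/\sin$ combinations of $x$ and $\nabla$, they upgrade the plain Strichartz bounds to the weighted and differentiated versions that control the full $H$-norm. Second, although $\mathcal H$ is not translation invariant, the Mehler kernel of $e^{-it\mathcal H}$ satisfies $\|e^{-it\mathcal H}\|_{L^1\to L^\infty}\lesssim |t|^{-N/2}$ for small $t>0$, exactly as in the free case; the $TT^*$ machinery then yields the homogeneous and inhomogeneous Strichartz estimates on bounded time intervals. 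Equivalently one may pull the free-Schr\"odinger estimates back through the lens transform. These are the estimates already used for the scalar problem in \cite{o,Cas}.

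The nonlinear step is a H\"older--Sobolev estimate. Using $\big||u_k|^p|u_j|^{p-2}u_j\big|\lesssim|u_k|^p|u_j|^{p-1}$, H\"older distributes the $2p-1$ factors onto $L^{2p}$-based norms, and the dual Strichartz exponent applied to the Duhamel integral closes the estimate at the regularity of item (1); the hypotheses $1<p\le p^*$ and $N\in[2,6]$ are precisely what make the ambient Sobolev embeddings hold and the map $z\mapsto|z|^{2p-2}z$ regular enough to differentiate. Running the same computation on differences $\Phi_j({\bf u})-\Phi_j({\bf v})$ yields the Lipschitz bound, so $\Phi=(\Phi_j)_j$ is a contraction for $T$ small; the Banach fixed point theorem then produces a unique maximal solution ${\bf u}\in C([0,T^*),H)$ and establishes item (1). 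The hard part is the endpoint $p=p^*$: there the existence time is not controlled by $\|\Psi\|_H$ alone, and one must replace the crude bound by the smallness of $\|e^{-i\cdot\mathcal H}\Psi\|_{L^q_T(W^{2,2p})}$ on short intervals, in the spirit of Cazenave--Weissler, using the continuity of $T\mapsto\|e^{-i\cdot\mathcal H}\Psi\|_{L^q_T(W^{2,2p})}$ and a splitting of the data.

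For item (2), conservation of mass and energy is first derived formally --- pairing the $j$th equation with $\bar u_j$ and taking the imaginary part gives $\frac{d}{dt}M(u_j)=0$, while pairing with $\dot{\bar u}_j$, summing over $j$ and invoking $a_{jk}=a_{kj}$ gives $\frac{d}{dt}E({\bf u})=0$ --- and then justified rigorously on a regularized problem and passed to the limit with the regularity from item (1). For item (3), in the defocusing case $\mu=1$ every term of the energy is nonnegative, so
\[
\tfrac12\sum_{j=1}^m\big(\|\nabla u_j(t)\|^2+\|xu_j(t)\|^2\big)\le E({\bf u}(t))=E(\Psi);
\]
together with mass conservation this bounds $\|{\bf u}(t)\|_H$ uniformly on $[0,T^*)$. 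In the subcritical range $1<p<p^*$ the local existence time depends only on $\|{\bf u}(t)\|_H$, so the blow-up alternative (if $T^*<\infty$ then $\|{\bf u}(t)\|_H\to\infty$ as $t\uparrow T^*$) cannot occur, whence $T^*=\infty$.
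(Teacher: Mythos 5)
Your proposal is correct and follows essentially the same route as the paper: a Duhamel--Strichartz contraction for the harmonic-oscillator propagator in mixed space-time norms tracking $u$, $\nabla u$ and $xu$ (your Heisenberg conjugation identities play the role of the paper's Duhamel identities for $\nabla u$ and $xu$), followed by the conservation laws and a continuation argument for item (3). If anything, your explicit flagging of the endpoint $p=p^*$ as requiring smallness of the free evolution in the Strichartz norm, in the spirit of Cazenave--Weissler, is more careful than the paper's Section 3 argument, whose contraction factor $T^{\frac{4p-2N(p-1)}{4p}}$ degenerates to $T^{0}$ exactly at $p=p^*$ and is only rescued by the separate critical-case proposition proved later.
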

In the critical case, global existence for small data holds in the energy space.
\begin{thm}\label{glb}
 Let $N\in[3,6]$ and $p=p^*.$ There exists $\epsilon_0>0$ such that if $\Psi:=(\psi_1,...,\psi_m) \in H$ satisfies $ \displaystyle \sum_{j=1}^m\displaystyle\int_{\R^N}(|\nabla \psi_j|^2+|x\psi_j|^2)\,dx\leq \epsilon_0$, the system \eqref{S} possesses a unique global solution ${\bf u}\in C(\R, H)$.
\end{thm}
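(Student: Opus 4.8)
The plan is to upgrade the local theory of Theorem \ref{existence} to a global one by means of a continuity (bootstrap) argument fed by Strichartz estimates, the smallness of the data entering only through the homogeneous term. Write $U(t):=e^{it(\Delta-|x|^2)}$ for the harmonic propagator and let $S([0,T))$ denote the critical Strichartz space of Theorem \ref{existence}(1), built on the admissible Lebesgue pair $(q,r)=(2,2p)=(2,\frac{2N}{N-2})$; indeed $\frac{4p}{N(p-1)}=2$ and $\frac2q=N(\frac12-\frac1r)$ both hold at $p=p^*$, and the nonlinearity $\sum_k a_{jk}|u_k|^p|u_j|^{p-2}u_j$ is homogeneous of degree $2p-1=\frac{N+2}{N-2}$, i.e.\ energy-critical. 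By Theorem \ref{existence} there is a unique maximal solution ${\bf u}\in C([0,T^*),H)$ with finite Strichartz norm on compact subintervals, and the critical local construction provides the blow-up alternative: if $T^*<\infty$ then $\|{\bf u}\|_{S([0,T^*))}=+\infty$. It therefore suffices to prove that this norm remains finite, in fact small, on $[0,T^*)$.

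Next I would assemble the two Strichartz inputs. For the linear evolution of the data, the homogeneous estimate for $U$ gives $\|U(t)\Psi\|_{S([0,\infty))}\lesssim\big(\sum_{j=1}^m\|\psi_j\|_{\dot\Sigma}^2\big)^{1/2}$, where $\|\psi_j\|_{\dot\Sigma}^2:=\|\nabla\psi_j\|^2+\|x\psi_j\|^2$; by hypothesis the right-hand side is $\le\sqrt{\epsilon_0}$. Crucially, only this homogeneous seminorm appears, which is exactly why the theorem requires smallness of the gradient and potential parts but not of the $L^2$ mass. For the Duhamel term I would apply the inhomogeneous Strichartz inequality together with Hölder in space and the Sobolev embedding attached to $\Sigma$, estimating component by component $\big\||u_k|^p|u_j|^{p-2}u_j\big\|_{\mathrm{dual}}\lesssim\|{\bf u}\|_{S}^{\,2p-1}$, the exponent $2p-1$ matching the critical homogeneity. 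The derivative bookkeeping for the operator $\Delta-|x|^2$ forces a Leibniz/fractional chain rule on the nonlinearity, and it is here that the restriction $N\le6$ will be used, so that the degree $2p-1=\frac{N+2}{N-2}\ge2$ nonlinearity is regular enough for the estimate to close.

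Combining the two inputs through Duhamel's formula, set $X(T):=\|{\bf u}\|_{S([0,T))}$ for $T<T^*$, so that $X(T)\le C\sqrt{\epsilon_0}+C\,X(T)^{2p-1}$ with $2p-1>1$. Since $T\mapsto X(T)$ is continuous, nondecreasing, and tends to $0$ as $T\to0^+$, a standard continuity argument shows that if $\epsilon_0$ is fixed small enough that $C\sqrt{\epsilon_0}$ stays below the threshold of $y\mapsto y-Cy^{2p-1}$, then $X(T)\le2C\sqrt{\epsilon_0}$ for every $T<T^*$. The critical Strichartz norm is thus finite on $[0,T^*)$, so the blow-up alternative forces $T^*=\infty$; running the identical argument backward in time (the equation and the estimates being time-reversible) produces a solution on all of $\R$, with uniqueness and ${\bf u}\in C(\R,H)$ inherited from Theorem \ref{existence}.

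The main obstacle I anticipate is the global-in-time Strichartz estimate for $U(t)$. Unlike the free group, the Mehler kernel of the harmonic propagator refocuses periodically, so the pointwise dispersive bound $\|U(t)\|_{L^1\to L^\infty}\lesssim|t|^{-N/2}$ degenerates at multiples of the period and the $L^2_t$ Strichartz norm over $[0,\infty)$ is not immediate. I would overcome this either through the lens (conformal) transform, which conjugates $U(t)$ on a bounded time interval to the free Schr\"odinger group on all of $\R$ and thereby transfers the free global Strichartz estimates to the harmonic setting, or by summing the finite-interval estimates over successive periods using the metaplectic structure of the linear flow. The second, more routine, delicate point is the derivative count in the nonlinear estimate at criticality, which is precisely what confines the admissible dimensions to $N\in[3,6]$.
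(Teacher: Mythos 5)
Your local step (the contraction in the critical spaces $M(I)$, $S(I)$ and the resulting blow-up alternative) is essentially the paper's Proposition \ref{proposition 1}, but your globalization step has a genuine gap: it rests on the homogeneous Strichartz estimate over the half-line, $\|U(t)\Psi\|_{S([0,\infty))}\lesssim\big(\sum_{j}(\|\nabla\psi_j\|^2+\|x\psi_j\|^2)\big)^{1/2}$, and this estimate is \emph{false} for the harmonic propagator. The spectrum of $-\Delta+|x|^2$ is $N+2\N$, so $U(t)$ is $2\pi$-periodic in time; hence $t\mapsto\|U(t)\psi\|_{L^r}$ is a nonzero periodic function for every $\psi\neq0$, and its $L^q_t([0,\infty))$ norm diverges for any finite $q$. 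Consequently the bootstrap inequality $X(T)\le C\sqrt{\epsilon_0}+C\,X(T)^{2p-1}$ with $C$ independent of $T$ cannot be established; worse, the conclusion you aim for -- a finite, small critical Strichartz norm on $[0,\infty)$ -- is itself false for any nonzero solution, since the confined flow recurs rather than disperses (there is no scattering for the harmonic oscillator). Neither of your proposed repairs can work: the lens transform linearizes only the bounded window $|t|<\pi/4$ and, at the energy-critical power, produces a time-dependent coupling $(\cos 2t)^{4/(N-2)}$ rather than the free equation, while summing per-period Strichartz bounds diverges because each period contributes the same positive amount.

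The mechanism that actually closes the argument -- and which your proposal never invokes -- is the conservation laws. From conservation of the energy and the Gagliardo--Nirenberg inequality \eqref{Nirenberg} one gets, for all $t<T^*$,
$$\big(\|x{\bf u}(t)\|+\|\nabla {\bf u}(t)\|\big)^2\le C\Big(\xi(\Psi)+\xi(\Psi)^{\frac N{N-2}}\Big)+C\big(\|x{\bf u}(t)\|+\|\nabla {\bf u}(t)\|\big)^{\frac{2N}{N-2}},$$
and the absorption Lemma \ref{Bootstrap}, applied to this continuous function of \emph{time}, shows that $\xi({\bf u}(t))$ stays small on all of $[0,T^*)$ once $\epsilon_0$ is small. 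Since Proposition \ref{proposition 1} solves the problem on an interval whose only hypothesis is smallness of $\|T(t){\bf u}(t_0)\|_{S(I)}$, which is controlled (with a constant depending only on $|I|$) by $\|x{\bf u}(t_0)\|+\|\nabla{\bf u}(t_0)\|$, this uniform-in-time smallness allows the local theory to be restarted at any $t_0$, and that is what forces $T^*=\infty$. In short, for a propagator with no global dispersion, smallness must be propagated by conserved quantities rather than by a global spacetime norm; your scheme should be reorganized around the energy identity, which is exactly the paper's route.
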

Now, we are interested on the focusing problem \eqref{S}.
 For ${\bf u} :=(u_1,...,u_m)\in H$, we define the action
$$S({\bf u}):= \frac{1}{2}\displaystyle\sum_{j=1}^m\| u_j\|_{\Sigma}^2-\frac{1}{2p}\displaystyle\sum_{j,k=1}^m a_{jk} \displaystyle\int_{\R^N}|u_j u_k|^{p}\,dx.$$
If $ \alpha, \,\beta\in \R,$ we call constraint
\begin{eqnarray*}
2K_{\alpha,\beta}({\bf u})
&:=&\displaystyle\sum_{j=1}^m\big((2\alpha + (N -2)\beta) \|\nabla u_j\|^2 + (2\alpha + N \beta) \| u_j\|^2+ (2\alpha + \beta(N+2)) \|x u_j\|^2\big)\\
& - &\frac{1}{p}\displaystyle\sum_{j,k=1}^m a_{jk}\displaystyle\int_{\R^N}(2p\alpha + N \beta)|u_j u_k|^{p}\,dx.
\end{eqnarray*}
\begin{defi}
We say that $\Psi:=(\psi_1,...,\psi_m)$ is a ground state solution to \eqref{S} if
\begin{equation}\label{E}
\Delta  \psi_j - \psi_j-|x|^2\psi_j + \displaystyle\sum_{k=1}^m a_{jk}|\psi_k|^p|\psi_j|^{p - 2}\psi_j=0,\quad 0\neq \Psi\in H_{rd}
\end{equation}
and it minimizes the problem
\begin{equation}\label{M}
m_{\alpha,\beta}:= \inf_{0\neq {\bf u}\in H}\{ S({\bf u})\quad\mbox{s.\,t}\quad K_{\alpha,\beta}({\bf u}) = 0\}.
\end{equation}
\end{defi}
\begin{rem}
If $\Psi\in H$ is a solution to \eqref{E}, then $e^{it}\Psi$ is a global solution of \eqref{S} said standing wave.
\end{rem}
Now, the existence of a ground state solution to \eqref{S} is claimed. Define the set $G_p:=\{(\alpha,\beta)\in\R_+^*\times\R_+$ s. t $\alpha(p-1)<\beta\}$.
\begin{thm}\label{t1}
Take $N\geq2$, $p_*<p<p^*$ and two real numbers $(\alpha,\beta)\in G_p.$ Then
\begin{enumerate}
\item $ m:=m_{\alpha,\beta}$ is nonzero and independent of $(\alpha,\beta);$
\item there is a minimizer of \eqref{M}, which is some nontrivial solution to \eqref{E}.
\end{enumerate}
\end{thm}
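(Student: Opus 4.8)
The plan is to treat \eqref{M} as a constrained minimization and to exploit the scaling that generates $K_{\alpha,\beta}$. Introduce the one–parameter family $\mathbf u^\lambda:=(e^{\alpha\lambda}u_j(e^{-\beta\lambda}\cdot))_{1\le j\le m}$ and note, by the change of variables $y=e^{-\beta\lambda}x$, that
\[
S(\mathbf u^\lambda)=\frac12\sum_j\Big(e^{(2\alpha+(N-2)\beta)\lambda}\|\nabla u_j\|^2+e^{(2\alpha+N\beta)\lambda}\|u_j\|^2+e^{(2\alpha+(N+2)\beta)\lambda}\|xu_j\|^2\Big)-\frac{1}{2p}\,e^{(2p\alpha+N\beta)\lambda}\sum_{j,k}a_{jk}\int_{\R^N}|u_ju_k|^p\,dx,
\]
so that $K_{\alpha,\beta}(\mathbf u)=\frac{d}{d\lambda}S(\mathbf u^\lambda)\big|_{\lambda=0}$ and, since the scaling is a group action, $\frac{d}{d\lambda}S(\mathbf u^\lambda)=K_{\alpha,\beta}(\mathbf u^\lambda)$. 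First I would use the constraint $K_{\alpha,\beta}(\mathbf u)=0$ to eliminate the coupling term: solving for $\sum_{j,k}a_{jk}\int|u_ju_k|^p$ and substituting it back rewrites $S$, on the constraint manifold, as a linear combination of $\|\nabla u_j\|^2,\ \|u_j\|^2,\ \|xu_j\|^2$ with coefficients that are explicit functions of $(\alpha,\beta)$. The role of the hypothesis $(\alpha,\beta)\in G_p$ is precisely to make this combination coercive on $H$, that is comparable to $Q(\mathbf u):=\sum_j\|u_j\|_\Sigma^2$.

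Given coercivity, positivity $m>0$ follows from a Gagliardo--Nirenberg estimate. The constraint forces $\sum_{j,k}a_{jk}\int|u_ju_k|^p\simeq Q(\mathbf u)$ (with positive constants, since the quadratic coefficients $2\alpha+(N\pm2)\beta$ and $2\alpha+N\beta$ are positive for $N\geq2$), while Hölder together with the Sobolev embedding $\Sigma\hookrightarrow L^{2p}$, valid since $p<p^*$, yields $\sum_{j,k}a_{jk}\int|u_ju_k|^p\lesssim Q(\mathbf u)^p$. Comparing the two bounds gives $Q(\mathbf u)\gtrsim1$ on $\{K_{\alpha,\beta}=0\}$, whence $S(\mathbf u)\gtrsim Q(\mathbf u)\gtrsim1$ and $m>0$.

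For the existence of a minimizer I would take a minimizing sequence $(\mathbf u_n)$; coercivity bounds it in $H$, so up to a subsequence $\mathbf u_n\rightharpoonup\bar{\mathbf u}$ weakly in $H$. The decisive tool is the compactness of the embedding $\Sigma\hookrightarrow\hookrightarrow L^{q}$ in the whole subcritical range (in particular for $q=2p$), afforded by the confining potential, which lets the coupling term converge strongly, $\sum_{j,k}a_{jk}\int|u_{j,n}u_{k,n}|^p\to\sum_{j,k}a_{jk}\int|\bar u_j\bar u_k|^p$. Combined with the uniform lower bound $Q(\mathbf u_n)\gtrsim1$, this forces $\bar{\mathbf u}\neq0$. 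Weak lower semicontinuity of the quadratic part then gives $K_{\alpha,\beta}(\bar{\mathbf u})\le0$, and if the inequality were strict one scales $\bar{\mathbf u}$ by a unique $\lambda_0$, read off from the monotonicity of $\lambda\mapsto K_{\alpha,\beta}(\bar{\mathbf u}^\lambda)$ near the critical point, to restore $K_{\alpha,\beta}(\bar{\mathbf u}^{\lambda_0})=0$; weak lower semicontinuity then yields $S(\bar{\mathbf u}^{\lambda_0})\le m$, forcing equality and producing a minimizer. Replacing it by its Schwarz symmetrization, which does not increase the relevant quantities, I may assume the minimizer lies in $H_{rd}$.

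It remains to identify the minimizer with a solution of \eqref{E} and to establish independence of $(\alpha,\beta)$. The minimizer $\Psi$ satisfies $S'(\Psi)=\eta\,K_{\alpha,\beta}'(\Psi)$ for a Lagrange multiplier $\eta$; pairing with the scaling generator $\frac{d}{d\lambda}\Psi^\lambda|_{\lambda=0}$ gives $0=K_{\alpha,\beta}(\Psi)=\eta\,\frac{d^2}{d\lambda^2}S(\Psi^\lambda)|_{\lambda=0}$, and since $(\alpha,\beta)\in G_p$ makes the fibering critical point nondegenerate the second derivative is nonzero, forcing $\eta=0$; thus $\Psi$ solves \eqref{E}. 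Finally, testing \eqref{E} against $\bar\psi_j$ and summing gives the Nehari identity $I_1(\Psi):=Q(\Psi)-\sum_{j,k}a_{jk}\int|\psi_j\psi_k|^p=0$, while the Virial/Pohozaev identity of the appendix provides a second relation $I_2(\Psi)=0$. Because $K_{\alpha',\beta'}=\alpha' I_1+\beta' I_2$ for every $(\alpha',\beta')$, any solution of \eqref{E} is admissible in \eqref{M} for all parameters, so a minimizer for $(\alpha,\beta)$ competes for any $(\alpha',\beta')\in G_p$, giving $m_{\alpha',\beta'}\le m_{\alpha,\beta}$, and by symmetry $m_{\alpha,\beta}$ is independent of $(\alpha,\beta)$. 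I expect the main obstacle to be the compactness/non‑vanishing step: ruling out loss of mass for the minimizing sequence and landing exactly on the constraint. Here the compact embedding provided by the harmonic potential is exactly what lets the direct method go through without a concentration–compactness analysis.
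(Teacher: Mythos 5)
Your proposal is sound and shares the paper's overall skeleton --- direct minimization on the constraint, a scaling argument to land back on $\{K_{\alpha,\beta}=0\}$, a Lagrange multiplier eliminated by showing $\partial_\lambda^2 S(\Psi^\lambda)_{|\lambda=0}\neq 0$, and independence of $(\alpha,\beta)$ via the generalized Pohozaev identity --- but it differs in two technically substantive ways, both to your advantage. First, you eliminate the \emph{coupling} term on the constraint, so that $S$ restricted to $\{K_{\alpha,\beta}=0\}$ becomes the purely quadratic, coercive form
\begin{equation*}
S({\bf u})=\sum_{j=1}^m\Big(\frac{\alpha(p-1)+\beta}{2p\alpha+N\beta}\,\|\nabla u_j\|^2+\frac{\alpha(p-1)}{2p\alpha+N\beta}\,\|u_j\|^2+\frac{\alpha(p-1)-\beta}{2p\alpha+N\beta}\,\|x u_j\|^2\Big),
\end{equation*}
whereas the paper's $H_{\alpha,\beta}=S-\frac{1}{2\alpha+(N+2)\beta}K_{\alpha,\beta}$ removes $\|xu_j\|^2$ but keeps the nonlinearity; as a result, boundedness of minimizing sequences costs the paper a lengthy algebraic manipulation (its first step) and $m>0$ is obtained only a posteriori from the minimizer, while for you both are immediate (and, incidentally, your positivity argument never uses $p>p_*$, which the paper needs in Lemma \ref{K>0}). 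Second, you obtain compactness from $\Sigma\hookrightarrow\hookrightarrow L^{2p}$ \emph{without} radial symmetry, so no rearrangement of the minimizing sequence is required; the paper instead symmetrizes and invokes \eqref{radial}. This matters more than it looks: Schwarz symmetrization \emph{increases} $\int|u_ju_k|^p$ (Hardy--Littlewood), so $H_{\alpha,\beta}$, which carries this term with a positive coefficient, need not decrease under rearrangement, and the paper's one-line ``rearrangement argument via Lemma \ref{Lemma}'' glosses over a real difficulty that your quadratic reduction sidesteps; you only need symmetrization at the very end, to place the minimizer in $H_{rd}$ as \eqref{E} demands, and there only quadratic quantities (plus one rescaling back to the constraint) are involved. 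Three caveats on your side. (i) The non-radial compact embedding is true but is not among the paper's stated tools (Proposition \ref{injection} only gives $\Sigma_{rd}$ in \eqref{cpk}); justify it via $\int_{|x|>R}|u|^2\,dx\leq R^{-2}\|xu\|^2$, Rellich on balls, and interpolation. (ii) The nondegeneracy $\partial_\lambda^2 S(\Psi^\lambda)_{|\lambda=0}\neq0$ should be computed, not asserted: using $K_{\alpha,\beta}(\Psi)=0$ it reduces to the strict inequalities $2p\alpha+N\beta>2\alpha+(N\pm2)\beta$ and $2p\alpha+N\beta>2\alpha+N\beta$, which is exactly where $G_p$ enters (this is the paper's computation of the quantity $(A)$). (iii) Your linear-combination formula for $K_{\alpha',\beta'}$ is correct in spirit but with the wrong basis if $I_2$ is the appendix virial functional $K_{1,-2/N}$ rather than $K_{0,1}$; simplest is to quote the paper's Pohozaev proposition, which directly gives $K_{\alpha',\beta'}(\Psi)=0$ for all $(\alpha',\beta')$. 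Finally, note that your coercivity claim, like the paper's own lemmas ($H_{\alpha,\beta}\geq0$ forces $\alpha(p-1)\geq\beta$), requires $\beta<\alpha(p-1)$: the inequality in the printed definition of $G_p$ is evidently reversed, and both your proof and the paper's use the intended orientation.
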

Using the potential well method \cite{ps}, we discuss global and non global existence of a solution to the focusing problem \eqref{S}. Define the sets
\begin{gather*}
A_{\alpha,\beta}^+:= \{ {\bf u}\in H \quad\mbox{s.\, t}\quad S({\bf u})<m\quad\mbox{and}\quad K_{\alpha,\beta}({\bf u})\geq 0\};\\
A_{\alpha,\beta}^-:= \{ {\bf u}\in H \quad\mbox{s.\, t}\quad S({\bf u})<m\quad\mbox{and}\quad K_{\alpha,\beta}({\bf u})< 0\}.
\end{gather*}
\begin{thm}\label{t2}
Take $N\in[2,6]$ and $ p_*< p< p^*.$ Let $\Psi:=(\psi_1,..,\psi_m) \in H$ and ${\bf u}\in C_{T^*}(H)$ the maximal solution to \eqref{S}. 
\begin{enumerate}
\item
If there exist $(\alpha,\beta)\in G_p\cup\{1,-\frac2N\} $ and $t_0\in  [0, T^*)$ such that 
${\bf u}(t_0)\in A_{\alpha,\beta}^+$, then ${\bf u}$ is global;
\item
if there exist $(\alpha,\beta)\in G_p\cup\{1,-\frac2N\} $ and $t_0\in  [0, T^*)$ such that 
${\bf u}(t_0)\in A_{\alpha,\beta}^-$ and $x{\bf u}(t_0)\in L^2$, then ${\bf u}$ is non global.
\end{enumerate}
\end{thm}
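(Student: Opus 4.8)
The plan is to conserve the action and then reduce both statements to the distinguished virial parameter $(\alpha,\beta)=(1,-\frac2N)$. First, since $S({\bf u})=E({\bf u})+\frac12\sum_{j=1}^m M(u_j)$ and both the energy and the mass are conserved by Theorem \ref{existence}, the action is conserved: $S({\bf u}(t))=S(\Psi)$ on $[0,T^*)$. Next, introducing the scaling ${\bf u}^\lambda=(u_1^\lambda,\dots,u_m^\lambda)$, $u_j^\lambda(x):=e^{\alpha\lambda}u_j(e^{-\beta\lambda}x)$, a direct computation gives $K_{\alpha,\beta}({\bf u})=\frac{d}{d\lambda}S({\bf u}^\lambda)\big|_{\lambda=0}$, so that $K_{\alpha,\beta}=0$ selects the critical points of $\lambda\mapsto S({\bf u}^\lambda)$. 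Studying this one--variable profile on the well $\{0\neq{\bf u}\in H:\ S({\bf u})<m\}$, together with the independence of $m=m_{\alpha,\beta}$ from $(\alpha,\beta)$ (Theorem \ref{t1}), I would show that the sign of $K_{\alpha,\beta}({\bf u})$ there does not depend on the admissible pair; this identifies $A^\pm_{\alpha,\beta}$ with $A^\pm_{1,-\frac2N}$ and lets me work henceforth with $(1,-\frac2N)$, for which
$$
K_{1,-\frac2N}({\bf u})=\frac2N\sum_{j=1}^m\big(\|\nabla u_j\|^2-\|xu_j\|^2\big)-\frac{p-1}{p}\sum_{j,k=1}^m a_{jk}\int_{\R^N}|u_ju_k|^p\,dx .
$$

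For part (1) I would first check that $A^+_{1,-\frac2N}$ is flow-invariant: $S({\bf u}(t))<m$ for all $t$ by conservation, and if $K_{1,-\frac2N}({\bf u}(t))$ became negative then, by continuity of $t\mapsto K_{1,-\frac2N}({\bf u}(t))$ and mass conservation (which forces ${\bf u}(t)\neq0$ when $\Psi\neq0$), there would be a time where $K_{1,-\frac2N}=0$ with ${\bf u}\neq0$, whence $S\geq m$, contradicting $S=S(\Psi)<m$. On $A^+_{1,-\frac2N}$ the auxiliary functional
$$
\widetilde S({\bf u}):=S({\bf u})-\tfrac{1}{2(p-1)}K_{1,-\frac2N}({\bf u})=\tfrac{1}{4(p-1)}\sum_{j=1}^m\Big[\big(2(p-1)-\tfrac4N\big)\|\nabla u_j\|^2+2(p-1)\|u_j\|^2+\big(2(p-1)+\tfrac4N\big)\|xu_j\|^2\Big]
$$
has no nonlinear part, and since $p>p_*$ makes its three coefficients positive it is coercive on $H$. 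As $K_{1,-\frac2N}({\bf u}(t))\geq0$, this yields $\|{\bf u}(t)\|_H^2\lesssim\widetilde S({\bf u}(t))\leq S({\bf u}(t))=S(\Psi)<m$, a bound uniform in $t$; the blow-up alternative of Theorem \ref{existence} then forces $T^*=\infty$.

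For part (2) the same argument shows $A^-_{1,-\frac2N}$ is flow-invariant, so $K_{1,-\frac2N}({\bf u}(t))<0$ and $S({\bf u}(t))=S(\Psi)<m$ throughout. The key quantitative point is that on the unstable set one has $\widetilde S({\bf u})\geq m$, i.e.
$$
K_{1,-\frac2N}({\bf u}(t))\leq 2(p-1)\big(S(\Psi)-m\big)=:-\delta<0 .
$$
I would obtain this by scaling: when $K_{1,-\frac2N}({\bf u})<0$ the maximiser $\lambda^*$ of $\lambda\mapsto S({\bf u}^\lambda)$ is negative, the map $\lambda\mapsto\widetilde S({\bf u}^\lambda)$ is convex (the identity $2p\alpha+N\beta=2(p-1)$ at $(1,-\frac2N)$ kills the nonlinear term in its derivative) with minimiser $\lambda_{\min}\leq\lambda^*$, hence $\widetilde S({\bf u})\geq\widetilde S({\bf u}^{\lambda^*})=S({\bf u}^{\lambda^*})\geq m$. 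Since ${\bf u}(t_0)\in H$ keeps $x{\bf u}(t)\in L^2$ with $J(t):=\sum_{j}\|xu_j(t)\|^2$ twice differentiable, the virial identity of the appendix, $J''(t)=4N\,K_{1,-\frac2N}({\bf u}(t))$, combined with the bound above gives $J''(t)\leq-4N\delta<0$. Then $J(t)\leq J(t_0)+J'(t_0)(t-t_0)-2N\delta\,(t-t_0)^2$ would become negative in finite time, contradicting $J\geq0$; therefore $T^*<\infty$ and ${\bf u}$ is non global.

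The main obstacle is the variational input common to both parts: the reduction lemma showing that the sign of $K_{\alpha,\beta}$ is parameter-independent on $\{S<m\}$, and the sharp inequality $\widetilde S\geq m$ on the unstable set. Both rest on a precise analysis of the profile $\lambda\mapsto S({\bf u}^\lambda)$ and on the characterisation of $m$ as the constrained minimum $\inf_{{\bf u}\neq0}\max_\lambda S({\bf u}^\lambda)$; in particular one must verify that this common value is still attained at the virial parameter $(1,-\frac2N)\notin G_p$, so that Theorem \ref{t1} may legitimately be used there.
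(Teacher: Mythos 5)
Your preliminary observations are fine: $S=E+\frac12\sum_jM(u_j)$ is conserved, $K_{\alpha,\beta}=\partial_\lambda S({\bf u}^\lambda)|_{\lambda=0}$, the algebra giving $\widetilde S=S-\frac1{2(p-1)}K_{1,-\frac2N}$ and its coercivity for $p>p_*$ is correct, and $Q''=4N\,K_{1,-\frac2N}({\bf u}(t))$ is the right virial identity. The fatal gap is the reduction on which both parts rest: the claim that on $\{S<m\}$ the sign of $K_{\alpha,\beta}$ is parameter-independent \emph{including} at the virial pair $(1,-\frac2N)$, the claim that $K_{1,-\frac2N}({\bf u})=0$, ${\bf u}\neq0$ forces $S({\bf u})\geq m$, and the claim that $\widetilde S\geq m$ on $A_{1,-\frac2N}^{-}$. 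All three are false, for one structural reason: the scaling $u^\lambda=e^{\lambda}u(e^{\frac{2\lambda}N}\cdot)$ attached to $(1,-\frac2N)$ preserves the $L^2$ norm, and the quadratic part of $K_{1,-\frac2N}$, namely $\frac2N\sum_j(\|\nabla u_j\|^2-\|xu_j\|^2)$, is \emph{indefinite}, so there is no potential-well (mountain-pass) geometry at this parameter. Concretely, take a radial $0\neq\psi\in\Sigma$ with $\|\nabla\psi\|<\|x\psi\|$ (a widely spread Gaussian) and set ${\bf u}_\epsilon:=(\epsilon\psi,\dots,\epsilon\psi)$. For all small $\epsilon$ one has $S({\bf u}_\epsilon)=O(\epsilon^2)<m$ and $K_{1,-\frac2N}({\bf u}_\epsilon)<0$ (the negative quadratic part dominates the $O(\epsilon^{2p})$ nonlinearity), while $K_{\alpha,\beta}({\bf u}_\epsilon)>0$ for every $(\alpha,\beta)\in G_p$, since there the quadratic part is positive definite. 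So $A_{1,-\frac2N}^{-}\neq A_{\alpha,\beta}^{-}$, and $\widetilde S({\bf u}_\epsilon)=O(\epsilon^2)<m$ although ${\bf u}_\epsilon\in A_{1,-\frac2N}^{-}$. Adjusting the width of $\psi$ so that $\|\nabla\psi\|^2-\|x\psi\|^2$ is a small positive number and choosing $\epsilon$ to balance it against the nonlinear term gives nonzero functions with $K_{1,-\frac2N}=0$ and arbitrarily small action, so $\inf\{S:\ K_{1,-\frac2N}({\bf u})=0,\ {\bf u}\neq0\}=0$, not $m$. Finally, your characterisation $m=\inf_{{\bf u}\neq0}\max_\lambda S({\bf u}^\lambda)$ collapses at this parameter: $\|xu^\lambda\|^2=e^{-\frac{4\lambda}N}\|xu\|^2\to\infty$ as $\lambda\to-\infty$, hence $\sup_\lambda S({\bf u}^\lambda)=+\infty$ for every ${\bf u}$ with $x{\bf u}\neq0$, and the maximiser $\lambda^*$ you invoke does not exist.

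This is not a presentational detail; it is why the paper's proof has a different shape. The paper proves independence of the sets only for pairs \emph{inside} $G_p$ (Lemma \ref{fn}, via a connectivity argument), proves part (1) using the functional $H_{1,1}$ together with energy conservation and Gagliardo--Nirenberg, and, for part (2), never uses the single-constraint problem at $(1,-\frac2N)$: it introduces the two-constraint problem \eqref{min2}, $\inf\{S:\ K_{1,-\frac2N}=0\ \mbox{and}\ I\leq0\}$ with $I=K_{1,0}$ the Nehari functional, shows in Proposition \ref{tgss} that this recovers the level $m$, and only then extracts a uniform bound $K_{1,-\frac2N}({\bf u}(t))\leq-\delta$ to feed into the virial identity. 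Your counterexample in fact shows more: ${\bf u}_\epsilon\in A_{1,-\frac2N}^{-}$ is arbitrarily small data, whose solution is global by the conservation laws, inequality \eqref{Nirenberg} and Lemma \ref{Bootstrap}; so statement (2), read literally at $(1,-\frac2N)$ with the sets as defined, cannot be proved by \emph{any} argument without an additional Nehari-type constraint (a delicacy the paper itself glosses over when it invokes Lemma \ref{lem} at that pair, and which Proposition \ref{tgss} is designed to handle). A repaired proof must therefore run the invariance and coercivity arguments at pairs in $G_p$, where the well geometry is genuine, and separately establish the extra information ($I\leq0$, or the two-constraint characterisation of $m$) needed to control $K_{1,-\frac2N}$; identifying $A^{\pm}_{1,-\frac2N}$ with $A^{\pm}_{\alpha,\beta}$ is not an available shortcut.
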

The last result concerns instability by blow-up for standing waves of the Schr\"odinger problem \eqref{S}. Indeed, near ground state, there exist infinitely many data giving finite time blowing-up solutions to \eqref{S}.
\begin{thm}\label{t3}
Take $N\in[2,6]$ and $p_1:=1+\frac2{N^2}(1+\sqrt{1+N^2})< p< p^*.$ Let $\Psi$ be a ground state solution to \eqref{E}. Then, for any $\varepsilon>0$, there exists ${\bf u}_0\in H$ such that $\|{\bf u}_0-\Psi\|_{H}<\varepsilon$ and the maximal solution to \eqref{S} with data ${\bf u}_0$ is not global.
\end{thm}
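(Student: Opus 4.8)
The plan is to realise the standing wave $\Psi$ as a degenerate maximum of the action $S$ along the mass-invariant dilation and then to push slightly into the region $A_{1,-\frac{2}{N}}^-$, where Theorem~\ref{t2} forces blow-up. Concretely, for $\lambda\in\R$ I would set, componentwise,
\[
\Psi_\lambda:=\big(e^{\lambda}\psi_j(e^{2\lambda/N}\,\cdot\,)\big)_{1\le j\le m},
\]
the flow generated by the scaling field in the direction $(\alpha,\beta)=(1,-\frac{2}{N})$. This dilation preserves the $L^2$ norm, keeps $\Psi_\lambda\in H$ with $x\Psi_\lambda\in L^2$, and satisfies $\Psi_\lambda\to\Psi$ in $H$ as $\lambda\to0$; hence $\|\Psi_\lambda-\Psi\|_H<\varepsilon$ for $|\lambda|$ small. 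Writing $g(\lambda):=S(\Psi_\lambda)$ and using that the dilations form a one-parameter group, one gets $g'(\lambda)=K_{1,-\frac{2}{N}}(\Psi_\lambda)$ for every $\lambda$, so it suffices to prove that $g$ has a strict local maximum at $\lambda=0$: then $g(\lambda)<g(0)=m$ and $g'(\lambda)<0$ for small $\lambda>0$, i.e. $\Psi_\lambda\in A_{1,-\frac{2}{N}}^-$, and applying Theorem~\ref{t2}(2) with $t_0=0$ and ${\bf u}_0=\Psi_\lambda$ produces a non-global solution arbitrarily close to $\Psi$.

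Next I would reduce everything to a sign. Since $\Psi$ solves the Euler--Lagrange equation \eqref{E}, it is an unconstrained critical point of $S$, whence $K_{\alpha,\beta}(\Psi)=0$ for every $(\alpha,\beta)$; in particular $g'(0)=0$. Separating the coefficients of $\alpha$ and $\beta$ in $K_{\alpha,\beta}(\Psi)=0$ (equivalently, testing \eqref{E} against $\psi_j$ and against $x\cdot\nabla\psi_j$) produces the two Pohozaev identities
\[
A+B+C=D,\qquad [(N-2)p-N]A+N(p-1)B+[(N+2)p-N]C=0,
\]
where $A:=\sum_j\|\nabla\psi_j\|^2$, $B:=\sum_j\|\psi_j\|^2$, $C:=\sum_j\|x\psi_j\|^2$ and $D:=\sum_{j,k}a_{jk}\int_{\R^N}|\psi_j\psi_k|^p\,dx$. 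As $p<p^*$ the coefficient $N-(N-2)p$ is positive, and as $\Psi\neq0$ we have $B>0$; solving the second identity for $A$ and discarding the positive $B$-term gives the strict lower bound $A/C>\big((N+2)p-N\big)/\big(N-(N-2)p\big)$.

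The computation of $g''(0)$ is the heart of the matter. Using the explicit scaling exponents of the three quadratic terms and of the nonlinear term under the dilation, together with $g'(0)=0$, one finds
\[
g''(0)=\frac{4}{N^2}\Big[\big(2-N(p-1)\big)A+\big(2+N(p-1)\big)C\Big].
\]
Here $2-N(p-1)<0$ because $p>p_*$, so the kinetic term is stabilizing, whereas the harmonic-potential term $C$ carries the positive coefficient $2+N(p-1)$ and is the sole obstruction. Thus $g''(0)<0$ is equivalent to $A/C>\big(2+N(p-1)\big)/\big(N(p-1)-2\big)$, and by the Pohozaev bound of the previous step it suffices that
\[
\frac{(N+2)p-N}{N-(N-2)p}\ \ge\ \frac{2+N(p-1)}{N(p-1)-2}.
\]
Clearing the (positive) denominators, this inequality collapses to $N^2p^2-(2N^2+4)p+N^2\ge0$, whose largest root is exactly $p_1=1+\frac{2}{N^2}(1+\sqrt{1+N^2})$; hence $p>p_1$ yields $g''(0)<0$ and closes the argument.

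I expect the main obstacle to be precisely this last sign computation. The difficulty is structural rather than technical: the mass-invariant compression simultaneously raises the kinetic and nonlinear energies but lowers the potential energy $C=\sum_j\|x\psi_j\|^2$, and the destabilizing contribution of $C$ to $g''(0)$ can be absorbed by the kinetic term only when $A/C$ is large enough. Quantifying ``large enough'' through the Pohozaev identity is what pins the threshold at $p_1$, and it is the confining harmonic potential, absent in the potential-free theory, that makes such a threshold appear at all.
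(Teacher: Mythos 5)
Your proposal is correct, and every ingredient it invokes (Theorem \ref{t2}, the generalized Pohozaev identity $K_{\alpha,\beta}(\Psi)=0$ for all $(\alpha,\beta)$, the scaling calculus, $S(\Psi)=m$) is available in the paper. I checked the algebra: with $A=\sum_j\|\nabla\psi_j\|^2$, $C=\sum_j\|x\psi_j\|^2$ and $g(\lambda)=S(\Psi_\lambda)$, your formula $g''(0)=\frac{4}{N^2}\big[(2-N(p-1))A+(2+N(p-1))C\big]$ is right, the Pohozaev bound $A/C>\frac{(N+2)p-N}{N-(N-2)p}$ is right, and clearing denominators in the sufficient condition does reduce to $N^2p^2-(2N^2+4)p+N^2\ge 0$, whose largest root is exactly $p_1$. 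The overall skeleton coincides with the paper's: push the ground state along the mass-invariant dilation in the direction $(\alpha,\beta)=(1,-\frac2N)$ (your $e^{\lambda}\psi_j(e^{2\lambda/N}\cdot)$ is the paper's $\lambda^{N/2}\Psi(\lambda\cdot)$ reparametrized), show the dilated state lies in $A^-_{1,-\frac2N}$, invoke Theorem \ref{t2}(2), and let the parameter tend to the identity. But the key step is executed genuinely differently. The paper's Lemma \ref{cle} is stated for \emph{any} ${\bf v}$ in the cone $\{I({\bf v})\le0,\ K({\bf v})\le0\}$: it combines the two constraint inequalities into a one-sided bound on $K({\bf v}_\lambda)$ involving $f(\lambda)=(\lambda^4-1)(1+\frac{N(p-1)}{2p})-\frac{N(p-1)}{p}(\lambda^{N(p-1)+2}-1)$, and the threshold emerges from $f'(1)=4-px^2<0$ with $x=N(1-\frac1p)$ — which is the same quadratic as yours. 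You instead perform an exact second-derivative test at the critical point, using \emph{both} components of the Pohozaev identity (the $\beta$-component, i.e.\ $[(N-2)p-N]A+N(p-1)B+[(N+2)p-N]C=0$, is never used in the paper) to lower-bound $A/C$. Your route buys transparency: it exhibits $p_1$ as precisely the exponent at which the ground state switches from local minimum to local maximum of $S$ along the mass-invariant scaling, and it isolates the confining term $C$ as the sole destabilizing contribution. The paper's lemma buys generality: its estimate applies to non-critical data in the cone, not only to exact ground states. You also bypass the paper's intermediate flow-invariance lemma by applying Theorem \ref{t2}(2) directly at $t_0=0$, which is legitimate since that theorem only requires membership in $A^-_{1,-\frac2N}$ at a single time together with $x{\bf u}_0\in L^2$.

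One small wording repair: a strict local maximum of $g$ at $\lambda=0$ does not by itself yield $g'(\lambda)<0$ for small $\lambda>0$. What you actually need — and do prove — is $g''(0)<0$, which, combined with $g'(0)=K_{1,-\frac2N}(\Psi)=0$ and the smoothness of $g$ (it is an explicit combination of exponentials in $\lambda$), gives simultaneously $S(\Psi_\lambda)=g(\lambda)<m$ and $K_{1,-\frac2N}(\Psi_\lambda)=g'(\lambda)<0$ for $0<\lambda$ small, i.e.\ $\Psi_\lambda\in A^-_{1,-\frac2N}$. With that phrasing corrected, the argument is complete.
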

In what follows, we collect some intermediate estimates.
\subsection{Tools}
First, let us recall some known results \cite{fu,fu2,rm} about the free propagator associated to \eqref{S}. 
\begin{prop}There exists a family of operators $U:=U(t,s)$, $U(t):=U(t,0)$ such that $u(t,x):=U(t,s)\phi(x)$ is solution to the linear problem
$$i\dot u+\Delta u=|x|^2u,\quad u(s,.)=\phi.$$
Moreover, we have the following elementary properties
\begin{enumerate}
\item
$U(t,t)=Id;$
\item
$(t,s)\mapsto U(t,s)$ is continuous;
\item
$U(t,s)^*=U(t,s)^{-1};$
\item
$U(t,\tau)U(\tau,s)=U(t,s);$
\item
$U(t,s)$ is unitary of $L^2$.
\end{enumerate}
\end{prop}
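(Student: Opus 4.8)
The plan is to realise the propagator as the unitary group generated by the time-independent harmonic-oscillator Hamiltonian. Writing $H:=-\Delta+|x|^2$, the linear equation $i\dot u+\Delta u=|x|^2u$ reads $i\dot u=Hu$, so the natural candidate is $u(t)=e^{-i(t-s)H}\phi$. Accordingly I would set $U(t,s):=e^{-i(t-s)H}$; because the potential is time-independent, the two-parameter family collapses to a one-parameter group and all five properties become consequences of the spectral theory of $H$ together with Stone's theorem. Alternatively, the explicit Mehler kernel recalled in \cite{fu,fu2,rm} realises the same operator as an oscillatory integral.

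The first and main step is to establish that $H$, initially defined on $C_c^\infty(\R^N)$, is essentially self-adjoint in $L^2(\R^N)$. This is the chief obstacle, since $|x|^2$ is \emph{unbounded} and is not infinitesimally small relative to $-\Delta$, so the Kato--Rellich perturbation theorem does not apply. Instead I would use the classical fact that the normalised products of Hermite functions form a complete orthonormal system of eigenfunctions of $H$, with eigenvalues $N+2|k|$, $k\in\N^N$, tending to $+\infty$. Hence $H$ has compact resolvent and extends uniquely to a self-adjoint operator with purely discrete spectrum, and its functional calculus produces, for each $t\in\R$, the bounded operator $e^{-itH}$ acting diagonally on the Hermite basis through the unimodular phases $e^{-it(N+2|k|)}$.

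With $H$ self-adjoint, Stone's theorem furnishes a strongly continuous one-parameter unitary group $\{e^{-itH}\}_{t\in\R}$, and I set $U(t,s):=e^{-i(t-s)H}$. The five properties then follow from the functional calculus. Evaluating at $t=s$ gives $U(t,t)=e^{0}=\mathrm{Id}$, which is (1). Strong continuity of the group yields the joint strong continuity of $(t,s)\mapsto U(t,s)$, which is (2). The group law $e^{-i(t-\tau)H}e^{-i(\tau-s)H}=e^{-i(t-s)H}$ is exactly (4); applying it with intermediate point $s$ gives $U(t,s)U(s,t)=U(t,t)=\mathrm{Id}$, whence $U(s,t)=U(t,s)^{-1}$. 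Since $H$ is self-adjoint, $U(t,s)^*=\big(e^{-i(t-s)H}\big)^*=e^{i(t-s)H}=e^{-i(s-t)H}=U(s,t)$, so $U(t,s)^*=U(t,s)^{-1}$, giving (3); and each phase $e^{-it(N+2|k|)}$ having modulus one, every $U(t,s)$ is an isometric bijection of $L^2$, which is the unitarity (5).

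It remains to confirm that $u(t):=U(t,s)\phi$ genuinely solves the Cauchy problem. For $\phi$ in the domain of $H$---in particular for any Schwartz datum or any finite Hermite combination---Stone's theorem guarantees that $t\mapsto u(t)$ is $C^1$ into $L^2$, satisfies $i\dot u=Hu$ and $u(s)=\phi$, hence is a strong solution of $i\dot u+\Delta u=|x|^2u$; for general $\phi\in L^2$ the same identity persists in the distributional sense by density of the domain of $H$ and boundedness of $U(t,s)$. The one point demanding care is the identification of the generator of the group with the differential expression $-\Delta+|x|^2$ on its natural domain, which is once more settled by the Hermite eigenfunction expansion.
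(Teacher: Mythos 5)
Your proposal is mathematically sound, but it takes a genuinely different route from the paper, which in fact offers no proof at all: the proposition is stated as a recollection of known results, with the references \cite{fu,fu2,rm} doing the work. Those references (Fujiwara, Carles) construct the propagator $U(t,s)$ as an oscillatory integral with the Mehler-type kernel, a method designed for general subquadratic and even time-dependent potentials, which is precisely why the two-parameter formalism $U(t,s)$ appears in the statement. Your argument instead exploits the time-independence of $|x|^2$: you diagonalise $H=-\Delta+|x|^2$ on the Hermite basis (eigenvalues $N+2|k|$), obtain essential self-adjointness and compact resolvent, invoke Stone's theorem, and read off all five properties from the functional calculus, with $U(t,s)=e^{-i(t-s)H}$ depending only on $t-s$. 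This buys a short, self-contained, elementary proof and correctly identifies strong (not norm) continuity as the meaning of property (2); what it gives up is the generality of the kernel construction, which also yields the dispersive $L^1\to L^\infty$ bounds underlying the Strichartz estimates of Proposition 2.9, something the spectral argument alone does not provide. One small technical point to tidy: the Hermite functions are Schwartz but not compactly supported, so to conclude essential self-adjointness on $C_c^\infty(\R^N)$ you should either take the span of Hermite functions (or Schwartz space) as the core, or invoke Kato's theorem for non-negative $L^2_{\mathrm{loc}}$ potentials; the criterion ``complete orthonormal set of eigenvectors in the domain implies essential self-adjointness'' applies verbatim only when those eigenvectors lie in the chosen initial domain.
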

Duhamel formula yields 
\begin{prop}\label{inhm}If $u$ is a solution to the inhomogeneous Schr\"odinger problem
$$i\dot u+\Delta u-|x|^2u=h,\quad u(0,.)=0,$$
then 
\begin{enumerate}
\item
$u(t)=-i\int_0^tU(t-s)h(s,x)ds;$
\item
$\nabla u(t)=-i\int_0^tU(t-s)[\nabla h+2xu]\,ds;$
\item
$xu(t)=-i\int_0^tU(t-s)[x h+2\nabla u]\,ds.$
\end{enumerate}
\end{prop}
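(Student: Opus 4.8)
The plan is to establish the first formula by the classical Duhamel principle and then to obtain the gradient and weighted formulas by applying, respectively, $\nabla$ and the multiplication by $x$ to the equation, reducing each case back to the first one. For part (1), I would set $v(t):=-i\int_0^t U(t-s)h(s)\,ds$ and verify directly that $v$ solves the same Cauchy problem as $u$. Clearly $v(0)=0$. Differentiating in $t$ and using that $\partial_\tau U(\tau)=i(\Delta-|x|^2)U(\tau)$, which is exactly the linear equation solved by the propagator of the previous proposition, the boundary term produces $-ih(t)$ while the interior term may be pulled under $\Delta-|x|^2$; a short computation then yields $i\dot v+\Delta v-|x|^2 v=h$. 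Since the homogeneous problem with null data admits only the trivial solution — because $U(t,s)$ is unitary on $L^2$, so any such solution has constant, hence vanishing, $L^2$ norm — uniqueness forces $u=v$, which is (1).

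For parts (2) and (3) the key observation is that $\nabla$ and multiplication by $x$ almost commute with $\Delta-|x|^2$, the defect being precisely the source terms in the statement. Applying $\nabla$ to $i\dot u+\Delta u-|x|^2 u=h$ and using $[\nabla,\Delta]=0$ together with $\nabla(|x|^2u)=|x|^2\nabla u+2xu$, one finds that $w:=\nabla u$ solves $i\dot w+\Delta w-|x|^2 w=\nabla h+2xu$ with $w(0)=0$; formula (1) applied to $w$ is exactly (2). Likewise, multiplying the equation by $x$ and using $\Delta(xu)=x\Delta u+2\nabla u$, while $|x|^2$ commutes with multiplication by $x$, one gets that $z:=xu$ solves $i\dot z+\Delta z-|x|^2 z=xh+2\nabla u$ with $z(0)=0$, so that (1) applied to $z$ gives (3).

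The delicate point is not the algebra but its justification: $\nabla$ and $x$ are unbounded, so differentiating under the integral, commuting $\Delta-|x|^2$ through $U(t-s)$, and applying $\nabla$ or $x$ to the equation are only formal for a general source $h$. I would therefore first carry out the entire argument for smooth data, say $h\in C([0,T],\mathcal{S}(\R^N))$ with correspondingly regular $u$, where every manipulation is licit and the commutator identities are exact, obtaining the three identities in that class. The general case then follows by approximating $h$ in the relevant space and passing to the limit, using the boundedness of $U$ (unitarity on $L^2$ together with its mapping properties on $\Sigma$) to control both sides of each identity. This density and regularity step, rather than the formal derivation, is where the real work lies.
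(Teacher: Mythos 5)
Your proposal is correct and follows essentially the same route as the paper, whose own justification is exactly the remark that point (2) follows by differentiating the equation satisfied by $u$ and point (3) by multiplying it by $x$, each combined with the Duhamel formula of point (1). Your additional care about uniqueness (via unitarity of $U$ on $L^2$) and the density/regularization step is sound but goes beyond what the paper records, which treats these identities at the formal level.
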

\begin{rem}
Taking the derivative of the equation satisfied by $u$, we obtain the second point. For the last one, we multiply the same equation with $x$.
\end{rem}
A classical tool to study Schr\"odinger problems is the so-called Strichartz type estimate.
\begin{defi}
A pair $(q,r)$ of positive real numbers is admissible if
$$2\leq r<\infty\quad\mbox{and}\quad \frac1q+\frac1r=\frac12.$$
\end{defi}
In order to control an eventual solution to \eqref{S}, we will use the following Strichartz estimate \cite{rm}.
\begin{prop}\label{str}
For any time slab $I$, any admissible pairs $(q,r)$ and $(\alpha,\beta)$,
\begin{enumerate}
\item
$\|U(t)\phi\|_{L^q(I,L^r)}\leq C_q\|\phi\|$, $\forall\phi\in L^2;$
\item
$\|\int_0^tU(t-s)h(s,x)ds\|_{L^q(I,L^r)}\leq C_{\alpha,|I|}\|h\|_{L^{\alpha{'}}(I,L^{\beta{'}})}$, $\forall h\in L^{\alpha{'}}(I,L^{\beta{'}})$.
\end{enumerate}
\end{prop}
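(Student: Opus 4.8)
The plan is to derive both estimates from a single dispersive bound via the standard $TT^*$ argument, the only new feature relative to the free case being the behaviour of the harmonic propagator near its focal times. First I would record the two building blocks. The $L^2$ bound is immediate: since $U(t)$ is unitary on $L^2$ (the fifth property of the preceding proposition), $\|U(t)\phi\|=\|\phi\|$. The crucial ingredient is the dispersive estimate
\begin{equation*}
\|U(t)\phi\|_{\infty}\lesssim|\sin(2t)|^{-N/2}\,\|\phi\|_{1},
\end{equation*}
which I would obtain from Mehler's formula: the kernel of $U(t)=e^{it(\Delta-|x|^2)}$ is explicit, of the form $c_N(\sin 2t)^{-N/2}\exp\bigl(i\varphi(t,x,y)\bigr)$ with a real phase $\varphi$, so its $L^1\to L^\infty$ norm is the modulus of the prefactor.

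Next I would localise in time. On a time slab $I$ of length smaller than the first focal distance $\pi/2$ (the general case following by splitting $I$ into finitely many such pieces, which is the source of the dependence of the constant on $|I|$ in the second estimate), one has $|\sin 2t|\sim|t|$, so the harmonic propagator obeys the same decay $\|U(t)\phi\|_\infty\lesssim|t|^{-N/2}\|\phi\|_1$ as the free Schr\"odinger group. Interpolating this with the $L^2$ isometry gives $\|U(t)U(s)^*\phi\|_{r}\lesssim|t-s|^{-N(\frac12-\frac1r)}\|\phi\|_{r'}$ for every $2\le r<\infty$, and the admissibility relation between $q$ and $r$ is precisely what places the resulting time kernel in the range of the one-dimensional Hardy--Littlewood--Sobolev inequality.

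With these in hand, the homogeneous estimate (1) and the inhomogeneous estimate (2) both follow from the $TT^*$ machinery. Writing $T\phi:=U(\cdot)\phi$, the operator $TT^*$ acts by $h\mapsto\int U(t)U(s)^*h(s)\,ds$; combining the interpolated dispersive bound above with Minkowski and Hardy--Littlewood--Sobolev in the time variable bounds $TT^*$ from $L^{q'}(I,L^{r'})$ into $L^q(I,L^r)$, and a duality argument turns this into (1). For (2) I would first obtain the analogous bound for the full (non-retarded) bilinear form with the two possibly different admissible pairs $(q,r)$ and $(\alpha,\beta)$, and then invoke the Christ--Kiselev lemma to pass to the retarded integral $\int_0^t$; this is legitimate because all pairs are strictly admissible (here $r,\beta<\infty$ and $q,\alpha>2$), so no endpoint issue arises.

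The main obstacle is genuinely the focal structure of the harmonic oscillator: the Mehler prefactor $(\sin 2t)^{-N/2}$ blows up at the times $t=k\pi/2$, where $U(t)$ degenerates to (a multiple of) a reflection rather than a smoothing operator, and the correct branch of the fractional power --- equivalently the Maslov phase --- must be tracked across each focal point. This forces the time interval to be cut at the focal times and is exactly why the constant in (2) must be allowed to depend on $|I|$, the number of pieces growing with $|I|$. An alternative route that avoids the kernel computation altogether is the lens (pseudo-conformal) transform, which conjugates $U(t)$ to the free propagator on any slab avoiding $\cos 2t=0$ and transfers the classical free Strichartz estimates directly; the $|I|$-dependence then re-enters through the Jacobian factors $(\cos 2t)^{-N/2}$ of the change of variables.
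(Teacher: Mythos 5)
The paper itself contains no proof of this proposition: it is quoted from Carles \cite{rm}, whose argument (via Fujiwara's parametrix / Mehler's formula) is exactly the route you take. Your reconstruction --- unitarity on $L^2$, the Mehler dispersive bound $\|U(t)\phi\|_\infty\lesssim|\sin 2t|^{-N/2}\|\phi\|_1$, time-localization away from the focal times $k\pi/2$, $TT^*$ combined with Hardy--Littlewood--Sobolev, Christ--Kiselev for the retarded integral, and patching of long slabs --- is the standard proof and is sound. (Two small technical remarks: only the \emph{modulus} of the Mehler prefactor enters the dispersive estimate, so no Maslov-phase bookkeeping is actually required; and the comparability $|\sin 2(t-s)|\gtrsim|t-s|$ holds on a slab of any fixed length $<\pi/2$ with a constant that degrades as the length approaches $\pi/2$, which is harmless since you split anyway.)

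Two caveats, both really defects of the paper's statement that your write-up inherits rather than corrects. First, your parenthetical claim that the time-splitting produces $|I|$-dependence only in the second estimate is inconsistent with your own method: the homogeneous estimate on a long slab is likewise obtained by summing over the $O(1+|I|)$ subintervals, so its constant grows like $|I|^{1/q}$ --- and this growth is unavoidable, because $U$ has periodic orbits. Indeed, for the Gaussian ground state $h_0$ one has $U(t)h_0=e^{-iNt}h_0$, hence $\|U(t)h_0\|_{L^q(I,L^r)}=|I|^{1/q}\|h_0\|_{L^r}$, so part (1) with $C_q$ independent of the slab is false for long $I$; what your argument (and the cited reference) actually yields is both estimates with constants depending on $|I|$, i.e.\ a local-in-time Strichartz theory. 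Second, the Hardy--Littlewood--Sobolev step in the $TT^*$ argument forces the standard admissibility relation $\frac2q=N\big(\frac12-\frac1r\big)$; the paper's definition $\frac1q+\frac1r=\frac12$ agrees with this only when $N=2$, so your sentence that ``the admissibility relation between $q$ and $r$ is precisely what places the time kernel in the range of HLS'' is correct for the standard relation, not for the paper's. Neither caveat invalidates your approach; they locate exactly where the proposition, as literally stated, cannot be proved.
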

Any solution to \eqref{S} formally enjoys the so-called Virial identity, which proof is given in appendix.
\begin{prop}\label{vir}
Let ${\bf u}:=(u_1,..,u_m)\in H$, a solution to \eqref{S} such that $x{\bf u}\in L^2$. Then,
{\small\begin{equation}\label{vrl}
\frac{1}8\Big(\displaystyle\sum_{j=1}^m\|xu_j(t)\|^2\Big)''=\displaystyle\sum_{j=1}^m(\|\nabla u_j\|^2-\|xu_j\|^2)-\frac{N(p-1)}{2p}\displaystyle\sum_{j,k=1}^ma_{j,k}\int_{\R^N}|u_ju_k|^{p}\,dx.
\end{equation}}
\end{prop}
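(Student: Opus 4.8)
The plan is to set $V(t):=\sum_{j=1}^m\|xu_j(t)\|^2=\sum_{j=1}^m\int_{\R^N}|x|^2|u_j|^2\,dx$ and to obtain $V'$ and $V''$ by differentiating under the integral sign, each time eliminating the time derivatives $\dot u_j,\dot{\bar u}_j$ via \eqref{S}, namely $i\dot u_j=-\Delta u_j+|x|^2u_j+N_j$ with $N_j:=\mu\sum_k a_{jk}|u_k|^p|u_j|^{p-2}u_j$. The first observation is a gauge invariance: since $\sum_j\bar u_jN_j=\mu\sum_{j,k}a_{jk}|u_ju_k|^p$ and $|x|^2|u_j|^2$ are real, one has $\mathrm{Re}(\bar u_j\dot u_j)=-\,\mathrm{Im}(\bar u_j\Delta u_j)$, so both the potential and the nonlinearity drop out of $V'$. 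Using $\mathrm{Im}(\bar u_j\Delta u_j)=\nabla\!\cdot\mathrm{Im}(\bar u_j\nabla u_j)$ (the term $\mathrm{Im}|\nabla u_j|^2$ vanishing) together with $\nabla|x|^2=2x$, a single integration by parts gives $V'(t)=4\sum_j\int_{\R^N}x\cdot\mathrm{Im}(\bar u_j\nabla u_j)\,dx$.

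For the second derivative I would write $V''=4\sum_j\tfrac{d}{dt}\int x\cdot\mathrm{Im}(\bar u_j\nabla u_j)\,dx$, differentiate once more, insert $\dot u_j,\dot{\bar u}_j$, and sort the integrand into three groups. The kinetic group reproduces the classical free-flow virial and contributes $2\|\nabla u_j\|^2$ per component; the potential group, where $\partial_k(|x|^2)=2x_k$ forces $-\sum_k x_k(\partial_k|x|^2)|u_j|^2=-2|x|^2|u_j|^2$, contributes $-2\|xu_j\|^2$ per component. The only system-specific computation is the nonlinear group: after integrating $\int\bar u_j(x\cdot\nabla N_j)$ by parts it reduces to $\sum_j\big(2\,\mathrm{Re}\int\bar N_j(x\cdot\nabla u_j)+N\,\mathrm{Re}\int\bar u_jN_j\big)$, which I would simplify with the pointwise identity $\mathrm{Re}(|u_j|^{p-2}\bar u_j\partial_\ell u_j)=\tfrac1p\partial_\ell|u_j|^p$, the symmetry $a_{jk}=a_{kj}$ (to replace $|u_k|^p\,x\cdot\nabla|u_j|^p$ by $\tfrac12\,x\cdot\nabla|u_ju_k|^p$), and finally $\int x\cdot\nabla f=-N\int f$. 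This collapses the nonlinear group into $\tfrac{N\mu(p-1)}{p}\sum_{j,k}a_{jk}\int|u_ju_k|^p\,dx$. Collecting the three groups and using $\tfrac18V''=\tfrac12\sum_j\tfrac{d}{dt}\int x\cdot\mathrm{Im}(\bar u_j\nabla u_j)\,dx$ yields $\sum_j(\|\nabla u_j\|^2-\|xu_j\|^2)+\tfrac{N\mu(p-1)}{2p}\sum_{j,k}a_{jk}\int|u_ju_k|^p\,dx$, which is precisely \eqref{vrl} in the focusing case $\mu=-1$, the setting in which the identity is afterwards applied to blow-up.

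The main obstacle is not algebraic but the rigorous justification of these manipulations: a solution merely in $C([0,T^*),H)$ with $x{\bf u}\in L^2$ need not carry enough regularity and spatial decay to differentiate twice under the integral, to perform the spatial integrations by parts without boundary contributions, or to guarantee the finiteness of the intermediate weighted quantities. The standard remedy, which I would carry out in the appendix, is a regularization argument: either approximate $\Psi$ by smooth, rapidly decaying data, establish \eqref{vrl} for the corresponding smooth solutions where every step is licit, and pass to the limit by the well-posedness and continuous dependence of Theorem~\ref{existence}; or replace $|x|^2$ by a truncated weight $a_R$ with bounded $\nabla^2 a_R$, derive the identity with controlled remainders, and let $R\to\infty$. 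This is exactly why the proposition is stated as holding \emph{formally}, with the rigorous version deferred to the appendix.
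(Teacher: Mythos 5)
Your proposal is correct and follows essentially the same route as the paper's appendix: compute $V'(t)=4\sum_j\int_{\R^N}x\cdot\Im(\bar u_j\nabla u_j)\,dx$ (potential and nonlinearity dropping out by gauge invariance), differentiate the momentum functional once more, and split into kinetic, potential and nonlinear groups, the last being handled exactly as in the paper via $\Re(|u_j|^{p-2}\bar u_j\partial_\ell u_j)=\frac1p\partial_\ell|u_j|^p$, the symmetry $a_{jk}=a_{kj}$, and $\int x\cdot\nabla f\,dx=-N\int f\,dx$. Your explicit tracking of $\mu$ (recovering \eqref{vrl} for the focusing sign $\mu=-1$) and your closing remarks on regularization address points the paper leaves implicit, since its appendix proof is, like yours before that paragraph, purely formal.
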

Recall the so-called generalized Pohozaev identity \cite{sl1}.
\begin{prop}
$\Psi \in H$ is a solution to \eqref{E} if and only if $S'(\Psi)=0.$ Moreover, in such a case 
$$K_{\alpha,\beta}(\Psi)=0,\quad\mbox{for any}\quad (\alpha,\beta)\in\R^2.$$
\end{prop}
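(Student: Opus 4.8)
The plan is to derive both assertions from the variational structure of the action $S$. I would first establish that $S\in C^1(H,\R)$ and compute its differential explicitly; the equivalence in the first assertion is then just the identification of the Euler--Lagrange equation of $S$ with the system \eqref{E}, while the Pohozaev-type identity $K_{\alpha,\beta}(\Psi)=0$ follows by differentiating $S$ along a two-parameter dilation and invoking $S'(\Psi)=0$.

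For the first assertion I would compute the G\^ateaux derivative of $S$ at ${\bf u}=(u_1,\dots,u_m)$ in a direction ${\bf v}=(v_1,\dots,v_m)\in H$. Using $\frac{d}{d\epsilon}\big|_{\epsilon=0}|u+\epsilon v|^p=p|u|^{p-2}\mathrm{Re}(\bar u\,v)$ together with the symmetry $a_{jk}=a_{kj}$ to merge the two nonlinear contributions, one obtains
\begin{equation*}
\langle S'({\bf u}),{\bf v}\rangle=\sum_{j=1}^m\mathrm{Re}\langle u_j,v_j\rangle_{\Sigma}-\sum_{j,k=1}^m a_{jk}\,\mathrm{Re}\int_{\R^N}|u_k|^p|u_j|^{p-2}\bar u_j\,v_j\,dx .
\end{equation*}
Integrating the $\Sigma$-inner product by parts replaces the quadratic part by $\mathrm{Re}\int_{\R^N}\overline{(u_j+|x|^2u_j-\Delta u_j)}\,v_j\,dx$, so that
\begin{equation*}
\langle S'({\bf u}),{\bf v}\rangle=\sum_{j=1}^m\mathrm{Re}\int_{\R^N}\overline{\Big(u_j+|x|^2u_j-\Delta u_j-\sum_{k=1}^m a_{jk}|u_k|^p|u_j|^{p-2}u_j\Big)}\,v_j\,dx .
\end{equation*}
Hence $S'(\Psi)=0$ exactly when each bracketed expression vanishes, which is precisely system \eqref{E}: if $\Psi$ solves \eqref{E} the pairing is identically zero, and conversely the fundamental lemma of the calculus of variations (testing against all ${\bf v}\in H$) forces \eqref{E}.

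For the Pohozaev identity I would introduce the dilation $\Psi^\lambda_j(x):=e^{\alpha\lambda}\Psi_j(e^{-\beta\lambda}x)$ and record how the homogeneous pieces scale,
\begin{gather*}
\|\Psi_j^\lambda\|^2=e^{(2\alpha+N\beta)\lambda}\|\Psi_j\|^2,\quad \|\nabla\Psi_j^\lambda\|^2=e^{(2\alpha+(N-2)\beta)\lambda}\|\nabla\Psi_j\|^2,\\
\|x\Psi_j^\lambda\|^2=e^{(2\alpha+(N+2)\beta)\lambda}\|x\Psi_j\|^2,\quad \int_{\R^N}|\Psi_j^\lambda\Psi_k^\lambda|^p\,dx=e^{(2p\alpha+N\beta)\lambda}\int_{\R^N}|\Psi_j\Psi_k|^p\,dx.
\end{gather*}
Differentiating $S(\Psi^\lambda)$ at $\lambda=0$ then reproduces term by term exactly the expression $K_{\alpha,\beta}(\Psi)$. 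On the other hand $\lambda\mapsto\Psi^\lambda$ is a $C^1$ curve in $H$, so the chain rule gives $\frac{d}{d\lambda}\big|_{0}S(\Psi^\lambda)=\langle S'(\Psi),\partial_\lambda\Psi^\lambda|_{\lambda=0}\rangle$. Since $\Psi$ solves \eqref{E}, the first assertion yields $S'(\Psi)=0$, whence $K_{\alpha,\beta}(\Psi)=0$ for every $(\alpha,\beta)\in\R^2$.

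The main obstacle is the rigorous justification of the $C^1$ regularity of $S$ and of differentiation under the integral sign for the coupling term, which is delicate in the range $1<p<2$ because $s\mapsto|s|^{p-2}s$ is only H\"older continuous near the origin; this is handled through the embedding $\Sigma\hookrightarrow H^1\hookrightarrow L^{2p}$ (valid since $p\le p^*$) together with a dominated-convergence control of the difference quotients. A secondary point is ensuring that the integration by parts produces no boundary contribution and that $\partial_\lambda\Psi^\lambda|_{\lambda=0}$ genuinely lies in $H$; both follow from $\Psi\in H$ and the fact that the dilations preserve the conformal space.
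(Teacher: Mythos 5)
The paper itself offers no proof of this proposition: it is stated as a recalled result with a citation to Le Coz's survey \cite{sl1}, so your argument can only be measured against the standard proof given there. Your treatment of the first equivalence is correct and essentially complete in outline: $S$ is $C^1$ on $H$ (using $\Sigma\hookrightarrow L^{2p}$ for $p\le p^*$), the symmetry $a_{jk}=a_{kj}$ merges the two nonlinear contributions exactly as you say, and $S'(\Psi)=0$ is precisely the weak formulation of \eqref{E}. Your scaling exponents are also correct, and the identity $\partial_\lambda S(({\bf u}^\lambda)^{\alpha,\beta})_{|\lambda=0}=K_{\alpha,\beta}({\bf u})$ is in fact how the paper itself introduces $K_{\alpha,\beta}$ in Section 5.

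The gap is in the final chain-rule step, and your proposed justification of it is wrong. You assert that $\lambda\mapsto\Psi^\lambda$ is a $C^1$ curve in $H$ and that $\partial_\lambda\Psi^\lambda_{|\lambda=0}=\alpha\Psi-\beta\, x\cdot\nabla\Psi$ lies in $H$ ``because $\Psi\in H$ and dilations preserve the conformal space.'' Membership in $\Sigma$ does not give this: take $\Psi(x)=\sin(|x|^4)\,(1+|x|^2)^{-(N+7)/4}$; one checks $\Psi,\ x\Psi,\ \nabla\Psi\in L^2$, so $\Psi\in\Sigma$, yet $x\cdot\nabla\Psi\sim 4|x|^4\cos(|x|^4)(1+|x|^2)^{-(N+7)/4}$ is not square integrable. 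For such $\Psi$ the dilation curve is not differentiable in $H$ (its difference quotients are unbounded in $\Sigma$), so $S'(\Psi)$ cannot be paired with the formal derivative of the curve, and the estimate $|S(\Psi^\lambda)-S(\Psi)|\le o(1)\|\Psi^\lambda-\Psi\|_H$ coming from $S'(\Psi)=0$ does not control the quotient by $\lambda$. What rescues the argument is not $\Psi\in H$ but the fact that $\Psi$ solves the elliptic system: one must first run an elliptic bootstrap for $-\Delta+|x|^2$ showing that finite-energy solutions of \eqref{E} satisfy $\Psi\in H^2$ with $x\cdot\nabla\Psi\in L^2$ (solutions are in fact smooth and rapidly decaying), and only then is the chain rule legitimate. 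Equivalently, one can bypass scaling altogether, as the cited reference does: multiply \eqref{E} by $\bar\psi_j$ and by $\chi(x/R)\,x\cdot\nabla\bar\psi_j$, integrate by parts, and let $R\to\infty$; linear combinations of the resulting Nehari and Pohozaev identities give $K_{\alpha,\beta}(\Psi)=0$ for all $(\alpha,\beta)\in\R^2$. So the skeleton of your proof is the right one, but the point you dismissed as routine is exactly where the real work (regularity of solutions of \eqref{E}) lies.
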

The following Gagliardo-Nirenberg inequality \cite{gn} will be useful.
\begin{prop}\label{intrp}
For any $(u_1,..,u_m)\in H$ and any $1<p\leq p^*$ yields
\begin{equation}\label{Nirenberg}
\displaystyle \sum_{j,k=1}^{m}\displaystyle \int_{\R^N} |u_ju_k|^p\,dx \leq C \left(\displaystyle\sum_{j=1}^{m}\|\nabla u_j\|^2\right)^{\frac{(p-1)N}2}\left(\displaystyle\sum_{j=1}^{m}\|u_j\|^2\right)^{\frac{N-p(N -2)}2}.\end{equation}
\end{prop}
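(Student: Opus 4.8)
The plan is to reduce the vectorial inequality to the classical scalar Gagliardo--Nirenberg estimate, the key observation being that the mixed term factorizes as $|u_ju_k|^p=|u_j|^p|u_k|^p$. First I would apply the Cauchy--Schwarz inequality in the $x$ variable to each summand, which gives
$$\dint_{\R^N}|u_ju_k|^p\,dx\leq\Big(\dint_{\R^N}|u_j|^{2p}\,dx\Big)^{\frac12}\Big(\dint_{\R^N}|u_k|^{2p}\,dx\Big)^{\frac12}=\|u_j\|_{2p}^p\,\|u_k\|_{2p}^p.$$
Summing over $j,k$ decouples the double sum into a product, so that
$$\dsum_{j,k=1}^m\dint_{\R^N}|u_ju_k|^p\,dx\leq\Big(\dsum_{j=1}^m\|u_j\|_{2p}^p\Big)^2.$$

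Next I would invoke the scalar Gagliardo--Nirenberg inequality on $\R^N$: for $2\leq q\leq\frac{2N}{N-2}$ when $N\geq3$ (resp. $2\leq q<\infty$ when $N=2$) and $v\in H^1(\R^N)$,
$$\|v\|_q\leq C\|\nabla v\|^\theta\|v\|^{1-\theta},\qquad \theta=N\Big(\frac12-\frac1q\Big).$$
Taking $q=2p$, the admissibility condition $2\leq 2p\leq\frac{2N}{N-2}$ reads precisely $1\leq p\leq p^*$, which is compatible with the hypothesis $1<p\leq p^*$. A short computation gives $\theta=\frac{N(p-1)}{2p}$; raising the estimate to the power $p$ then yields
$$\|u_j\|_{2p}^p\leq C\|\nabla u_j\|^{\frac{N(p-1)}2}\|u_j\|^{\frac{N-p(N-2)}2},$$
since $p\theta=\frac{N(p-1)}2$ and $p(1-\theta)=\frac{N-p(N-2)}2$ are exactly the powers appearing in the claim.

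Finally I would assemble the two parts. Writing $a_j:=\|\nabla u_j\|^2$, $b_j:=\|u_j\|^2$, $s:=\frac{N(p-1)}2$ and $t:=\frac{N-p(N-2)}2$, the previous step reads $\|u_j\|_{2p}^p\leq C\,a_j^{s/2}b_j^{t/2}$. Because $a_j\leq\dsum_k a_k$ and $b_j\leq\dsum_k b_k$, and both $s\geq0$ (from $p>1$) and $t\geq0$ (equivalent to $p\leq p^*$), every summand is dominated by $\big(\dsum_k a_k\big)^{s/2}\big(\dsum_k b_k\big)^{t/2}$; summing the $m$ components gives $\dsum_j\|u_j\|_{2p}^p\leq Cm\big(\dsum_k\|\nabla u_k\|^2\big)^{s/2}\big(\dsum_k\|u_k\|^2\big)^{t/2}$. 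Squaring this and inserting it into the factorized bound from the first step produces the announced inequality, with a constant $C$ depending only on $N$, $p$ and $m$.

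The only genuinely delicate points are bookkeeping rather than conceptual. One must verify that the interpolation exponent $\theta=N(\frac12-\frac1q)$ with $q=2p$ produces exactly the powers $\frac{N(p-1)}2$ and $\frac{N-p(N-2)}2$, and that the range of validity of the scalar estimate (equivalently $\theta\le1$, equivalently $t\ge0$) coincides with the admissible range $1<p\leq p^*$; this is where the upper endpoint $p^*$ genuinely enters. The passage from componentwise norms to the collective sums is harmless: since $m$ is finite and the inequality is homogeneous in the pair $\big(\dsum_k\|\nabla u_k\|^2,\dsum_k\|u_k\|^2\big)$, absorbing a dimensional factor $C(m)$ costs nothing.
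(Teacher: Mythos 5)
Your proof is correct: the Cauchy--Schwarz factorization $\int|u_ju_k|^p\,dx\le\|u_j\|_{2p}^p\|u_k\|_{2p}^p$, the scalar Gagliardo--Nirenberg estimate with $q=2p$ and $\theta=\frac{N(p-1)}{2p}$, and the final summation all check out, and the exponents $p\theta=\frac{N(p-1)}{2}$ and $p(1-\theta)=\frac{N-p(N-2)}{2}$ match the statement exactly (the bound is also consistent under both dilation and amplitude scaling). The paper itself offers no proof of this proposition --- it is quoted as a known result with a citation to Nirenberg --- so there is nothing to compare against; your reduction of the vector inequality to the scalar one is the standard argument that the citation implicitly relies on, and it is carried out without gaps.
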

Let us list some Sobolev embeddings \cite{Adams,Lions}.
\begin{prop}\label{injection}
Recall some continuous and compact injections.
\begin{enumerate}
\item $ W^{s,p}(\R^N)\hookrightarrow L^q(\R^N)$ whenever
$1<p<q<\infty, \quad s>0\quad \mbox{and}\quad \frac{1}{p}\leq \frac{1}{q} + \frac {s}{N};$
\item
for $2<p< 2 p^*,$ 
\begin{equation}\label{radial} H_{rd}^1(\R^N)\hookrightarrow\hookrightarrow L^p(\R^N);\end{equation}
\item
for $2\leq p< 2 p^*,$ 
\begin{equation}\label{cpk} \Sigma_{rd}(\R^N)\hookrightarrow\hookrightarrow L^p(\R^N);\end{equation}
\item if $xu\in L^2$ and $\nabla u\in L^2$, then $u\in L^2$ and 
$$\|u\|\lesssim \|xu\|\|\nabla u\|.$$
\end{enumerate}
\end{prop}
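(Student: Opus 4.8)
The plan is to treat the four assertions separately, since they rest on classical but distinct mechanisms. Item (1) is the standard Sobolev embedding, which I would reduce to the critical case. When $sp<N$ the borderline inclusion $W^{s,p}\hookrightarrow L^{q_s}$ with $\frac1{q_s}=\frac1p-\frac sN$ is the Sobolev--Gagliardo--Nirenberg theorem (see \cite{Adams}); the full range $p<q\le q_s$ prescribed by $\frac1p\le\frac1q+\frac sN$ then follows by interpolating this with the trivial inclusion $W^{s,p}\hookrightarrow L^p$, writing $\frac1q=\frac\theta p+\frac{1-\theta}{q_s}$ for some $\theta\in[0,1)$ and using $\|u\|_q\le\|u\|_p^{\theta}\|u\|_{q_s}^{1-\theta}$. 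If $sp\ge N$ every admissible $q$ is finite and the conclusion is the sub-critical Sobolev embedding, again found in \cite{Adams,Lions}.

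For item (4) I would use a Heisenberg-type computation. Writing $N=\operatorname{div}(x)$ and integrating by parts gives
\begin{equation*}
N\|u\|^2=\int_{\R^N}\operatorname{div}(x)\,|u|^2\,dx=-2\,\mathrm{Re}\int_{\R^N}(x\cdot\nabla u)\,\overline u\,dx ,
\end{equation*}
so that Cauchy--Schwarz yields $N\|u\|^2\le 2\|xu\|\,\|\nabla u\|$, the uncertainty inequality (in its scale-invariant form $\|u\|^2\lesssim\|xu\|\,\|\nabla u\|$), which in particular shows $u\in L^2$. Justifying the integration by parts requires only an approximation of $u$ by Schwartz functions together with the two hypotheses $xu,\nabla u\in L^2$, and is routine.

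The two compactness statements (2) and (3) are the substantial part, and the common obstruction is the loss of compactness at spatial infinity: the full embeddings $H^1(\R^N)\hookrightarrow L^p$ and $\Sigma(\R^N)\hookrightarrow L^p$ are continuous but fail to be compact because a fixed profile may escape to infinity. For (2) I would invoke the radial decay (Strauss) estimate $|u(x)|\lesssim |x|^{-(N-1)/2}\|u\|_{H^1}$ valid for $u\in H^1_{rd}$. Given a bounded sequence $(u_n)\subset H^1_{rd}$, pass to a weak limit $u$, use Rellich--Kondrachov on each ball $\{|x|\le R\}$ to obtain strong $L^p$ convergence there, and use the Strauss bound to estimate $\int_{|x|>R}|u_n|^p\,dx\le\big(\sup_{|x|>R}|u_n|\big)^{p-2}\|u_n\|^2$, whose prefactor $\lesssim R^{-(N-1)(p-2)/2}$ tends to $0$ uniformly in $n$ as $R\to\infty$, precisely because $p>2$. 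Splitting the integral at $|x|=R$ then gives $u_n\to u$ in $L^p$.

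For (3) the weight $|x|^2$ in the $\Sigma$ norm plays the role of the radial decay and, pleasantly, even yields the endpoint $p=2$. Here the tail is controlled directly: for a bounded sequence in $\Sigma$ one has $\int_{|x|>R}|u_n|^2\,dx\le R^{-2}\|xu_n\|^2$, which tends to $0$ uniformly in $n$ as $R\to\infty$; combined with Rellich--Kondrachov on balls this gives $\Sigma\hookrightarrow\hookrightarrow L^2$. The remaining exponents $2<p<2p^*$ follow by interpolation, $\|u_n-u\|_p\le\|u_n-u\|_2^{\theta}\|u_n-u\|_{2p^*}^{1-\theta}$, the first factor vanishing and the second staying bounded via the continuous critical embedding $\Sigma\hookrightarrow H^1\hookrightarrow L^{2p^*}$. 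The main obstacle throughout is exactly this uniform control of the mass near infinity; once it is secured, the interior compactness is classical.
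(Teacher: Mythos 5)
The paper gives no proof of this proposition at all: it is stated as a list of classical facts with citations to \cite{Adams,Lions}, so your proposal is supplying what the paper omits rather than paralleling an argument of the paper. Your four arguments are the standard ones behind those citations and are essentially correct: (1) critical Sobolev embedding plus Lebesgue-norm interpolation; (4) the uncertainty-principle identity $N\|u\|^2=-2\,\mathrm{Re}\int_{\R^N}(x\cdot\nabla u)\bar u\,dx$ with Cauchy--Schwarz; (2) the Strauss radial decay bound combined with Rellich--Kondrachov on balls and the tail estimate, where $p>2$ is exactly what makes the tail vanish uniformly; (3) tightness from the weight, $\int_{|x|>R}|u_n|^2\,dx\leq R^{-2}\|xu_n\|^2$, plus interior compactness and interpolation. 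One small repair is needed in (3): you interpolate against $L^{2p^*}$, but when $N=2$ the paper sets $p^*=\infty$, so $L^{2p^*}=L^\infty$ and $H^1(\R^2)\not\hookrightarrow L^\infty$; for $N=2$ you should instead interpolate against any finite exponent $q\in(p,\infty)$, using $H^1(\R^2)\hookrightarrow L^q$, after which the same argument closes. It is also worth noting that your proof of (3) never uses radial symmetry --- the weight alone controls the mass at infinity --- so you have in fact proved the stronger statement $\Sigma\hookrightarrow\hookrightarrow L^p$, while in (2) radiality is genuinely needed, since translating bumps destroy compactness in $H^1$.
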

\begin{rem}
Using the previous inequality, we get $\|u\|_\Sigma\simeq\|xu\|+\|\nabla u\|.$
\end{rem}
We close this subsection with some absorption result \cite{Tao}.
\begin{lem} \label{Bootstrap}
 Let $T>0$ and $X\in C([0,  T], \R_+)$ such that
$$ X\leq a + b X^{\theta}\quad on \quad [0,T],$$
where  $a,\, b>0,\, \theta>1,\, a<(1 - \frac{1}{\theta})\frac{1}{(\theta b)^{\frac{1}{\theta}}}$ and $X(0)\leq \frac{1}{(\theta b)^{\frac{1}{\theta -1}}}.$ Then
$$X\leq \frac{\theta}{\theta - 1}a\quad on \quad [0, T].$$
\end{lem}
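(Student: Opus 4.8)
The plan is to run a standard continuity (bootstrap) argument by comparing $X$ to the zero set of the auxiliary function
$$g(y):=y-a-by^\theta,\qquad y\geq0,$$
so that the hypothesis $X\leq a+bX^\theta$ reads exactly $g(X(t))\leq0$ for every $t\in[0,T]$. First I would analyze $g$ on $\R_+$: since $g(0)=-a<0$ and $g'(y)=1-\theta b y^{\theta-1}$ vanishes only at $y_*:=(\theta b)^{-\frac1{\theta-1}}$, the function $g$ increases on $[0,y_*]$, decreases on $[y_*,\infty)$ and tends to $-\infty$. The smallness assumption on $a$ serves precisely to guarantee that the maximal value $g(y_*)=(1-\frac1\theta)y_*-a$ is strictly positive (using the identity $by_*^\theta=\frac1\theta y_*$). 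Consequently $g$ has exactly two positive roots $y_1<y_*<y_2$, and
$$\{y\geq0:\ g(y)\leq0\}=[0,y_1]\cup[y_2,\infty),$$
with $g>0$ strictly on the separating interval $(y_1,y_2)$.

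The core of the argument is a connectedness step. Since $X\in C([0,T],\R_+)$ and $[0,T]$ is connected, the image $X([0,T])$ is a connected subset of $[0,y_1]\cup[y_2,\infty)$; as these two closed intervals are separated by the open gap $(y_1,y_2)$, the whole trajectory must lie in exactly one of them, for otherwise the intermediate value theorem would produce a time where $X\in(y_1,y_2)$, i.e. $g(X)>0$, contradicting $g(X)\leq0$. To decide which component, I would use the initial datum: the hypothesis $X(0)\leq(\theta b)^{-\frac1{\theta-1}}$ says exactly $X(0)\leq y_*$, and combined with $g(X(0))\leq0$ this forces $X(0)\in[0,y_1]$, because $g>0$ on $(y_1,y_*]$. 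Hence $X(t)\in[0,y_1]$ for all $t\in[0,T]$.

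It remains to bound the threshold $y_1$. Since $0\leq y_1\leq y_*$, one has $by_1^{\theta-1}\leq by_*^{\theta-1}=\frac1\theta$, and inserting this into the defining relation $y_1=a+by_1^\theta=a+(by_1^{\theta-1})y_1$ yields $y_1\leq a+\frac1\theta y_1$, that is $y_1\leq\frac{\theta}{\theta-1}a$. Combining with the previous paragraph gives $X\leq\frac{\theta}{\theta-1}a$ on $[0,T]$, as claimed.

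I expect the main obstacle to be the connectedness/continuity step rather than any computation: one must verify carefully that the smallness of $a$ genuinely separates the sublevel set $\{g\leq0\}$ into two components and that the initial value lands in the good one. The algebraic estimates on $y_*$ and $y_1$ are elementary once the shape of $g$ is understood; the delicate point is ruling out a jump across the gap $(y_1,y_2)$, which is exactly where continuity of $X$ on the connected interval $[0,T]$ is invoked.
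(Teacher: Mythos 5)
Your argument is correct, but there is nothing in the paper to compare it against: the paper does not prove this lemma at all, it simply quotes it as a known absorption result with a citation to Tao's book. What you have written — introducing $g(y)=y-a-by^{\theta}$, showing that its sublevel set $\{g\le 0\}$ splits into two components $[0,y_1]\cup[y_2,\infty)$ separated by the gap $(y_1,y_2)$ around the critical point $y_*=(\theta b)^{-\frac{1}{\theta-1}}$, trapping the connected image $X([0,T])$ in the component containing $X(0)$, and finally bounding $y_1\le\frac{\theta}{\theta-1}a$ from $y_1=a+by_1^{\theta}$ together with $by_1^{\theta-1}\le by_*^{\theta-1}=\frac{1}{\theta}$ — is precisely the standard continuity/bootstrap proof, and every step is sound.

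One point deserves explicit attention rather than silence. You assert that the smallness assumption on $a$ ``serves precisely to guarantee'' $g(y_*)>0$, but that is not what the printed hypothesis says: the paper assumes $a<(1-\frac{1}{\theta})(\theta b)^{-\frac{1}{\theta}}$, whereas $g(y_*)>0$ requires $a<(1-\frac{1}{\theta})y_*$ with $y_*=(\theta b)^{-\frac{1}{\theta-1}}$. The two conditions coincide in strength only when $\theta b\le 1$; if $\theta b>1$ the printed condition is strictly weaker, one can have $g(y_*)\le 0$, the gap $(y_1,y_2)$ disappears, the constraint $g(X)\le 0$ becomes vacuous, and the conclusion genuinely fails (take $X$ continuous, starting at $y_*$ and growing arbitrarily large). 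So the exponent $\frac{1}{\theta}$ in the statement is almost certainly a typo for $\frac{1}{\theta-1}$, which is the form this lemma takes elsewhere in the literature, and your proof in fact establishes that corrected statement. Your write-up should flag this discrepancy explicitly instead of treating the printed hypothesis as if it delivered $g(y_*)>0$.
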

\section{Local well-posedness}
This section is devoted to prove Theorem \ref{existence}. The proof contains three steps. First we prove the existence of a local solution to \eqref{S}, second we show uniqueness and finally we establish global existence in the subcritical case. In this section, we assume that $\mu=1$, indeed the sign of the nonlinearity has no local effect.
\subsection{Local existence}
We use a standard fixed point argument. For $T>0,$ we denote the space
$$E_T:=\Big\{u\in C([0,T],\Sigma)\quad\mbox{s. t}\quad u,\nabla u,xu\in L^{\frac{4p}{N(p-1)}}([0, T], L^{2p})\Big\}^m$$
endowed with the complete norm
$$\|(u_1,..,u_m)\|_T:=\sum_{j=1}^m\Big(\|u_j\|_{L^{\infty}_T(L^2)\cap L^{\frac{4p}{N(p-1)}}_T(L^{2p})}+\|\nabla u_j\|_{L^{\infty}_T(L^2)\cap L^{\frac{4p}{N(p-1)}}_T(L^{2p})}+\|xu_j\|_{L^{\infty}_T(L^2)\cap L^{\frac{4p}{N(p-1)}}_T(L^{2p})}\Big).$$
Define, for ${\bf u}:=(u_1,..,u_m)$, the function
$$\phi({\bf u})(t) := T(t){\Psi} - i \displaystyle\sum_{k=1}^{m}\int_0^tT(t-s)\big(a_{1k}|u_k|^p|u_1|^{p-2}u_1,...,a_{mk}|u_k|^p|u_m|^{p-2}u_m\big)\,ds,$$
where $T(t){\Psi} := (U(t)\psi_{1},...,U(t)\psi_{m}).$ We prove the existence of some small $T, R >0$ such that $\phi$ is a contraction on the ball $ B_T(R)$ whith center zero and radius $R.$ Take ${\bf u}, {\bf v}\in E_T$, using Propositions \ref{inhm}-\ref{str} and denoting $g({\bf u}):=\sum_{k=1}^{m}a_{jk}|u_k|^p|u_j|^{p-2}u_j$, we have{\small
\begin{gather*}
\|\phi({\bf u})-\phi({\bf v})\|_{{L^{\infty}_T(L^2)\cap L^{\frac{4p}{N(p-1)}}_T(L^{2p})}}\lesssim \|g({\bf u})-g({\bf v})\|_{L^{\frac{4p}{p(4-N) + N}}(L^{{\frac{2p}{2p-1}}})};\\
\|\nabla(\phi({\bf u})-\phi({\bf v}))\|_{L^{\infty}_T(L^2)\cap L^{\frac{4p}{N(p-1)}}_T(L^{2p})}
\lesssim \|\nabla(g({\bf u})-g({\bf v}))\|_{L^{\frac{4p}{p(4-N) + N}}(L^{{\frac{2p}{2p-1}}})}+T\|x(\phi({\bf u})-\phi({\bf v}))\|_{L^\infty_T(L^2)};\\
\|x(\phi({\bf u})-\phi({\bf u}))\|_{L^{\infty}_T(L^2)\cap L^{\frac{4p}{N(p-1)}}_T(L^{2p})}
\lesssim \|x(g({\bf u})-g({\bf v}))\|_{L^{\frac{4p}{p(4-N) + N}}(L^{{\frac{2p}{2p-1}}})}+T\|\nabla (\phi({\bf u})-\phi({\bf v}))\|_{L^\infty_T(L^2)}.
\end{gather*}}
Thus, for small $T>0$,
\begin{eqnarray*}
\|\phi({\bf u})-\phi({\bf v})\|_{T}
&\lesssim&\|g({\bf u})-g({\bf v})\|_{L^{\frac{4p}{p(4-N) + N}}(L^{{\frac{2p}{2p-1}}})}+\|x(g({\bf u})-g({\bf v}))\|_{L^{\frac{4p}{p(4-N) + N}}(L^{{\frac{2p}{2p-1}}})}\\
&+&\|\nabla(g({\bf u})-g({\bf v}))\|_{L^{\frac{4p}{p(4-N) + N}}(L^{{\frac{2p}{2p-1}}})}+T\|\phi({\bf u})-\phi({\bf v})\|_{L^{\infty}(I_T,\Sigma)}\\
&\lesssim&\frac{1}{1-T}\Big(\|g({\bf u})-g({\bf v})\|_{L^{\frac{4p}{p(4-N) + N}}(W^{1,{\frac{2p}{2p-1}}})}+\|x(g({\bf u})-g({\bf v}))\|_{L^{\frac{4p}{p(4-N) + N}}(L^{{\frac{2p}{2p-1}}})}\Big).
\end{eqnarray*}
Then,
\begin{eqnarray*}
\|\phi({\bf u}) - \phi({\bf v})\|_T
&\lesssim& \displaystyle\sum_{j, k=1}^{m}\||u_k|^p|u_j|^{p-2}u_j -  |v_k|^p |v_j|^{p-2}v_j\|_{L^{\frac{4p}{p(4-N) + N}}_T(W^{1,{\frac{2p}{2p-1}}})}\\
&+&\displaystyle\sum_{j, k=1}^{m}\|x(|u_k|^p|u_j|^{p-2}u_j -  |v_k|^p |v_j|^{p-2}v_j)\|_{L^{\frac{4p}{p(4-N) + N}}_T(L^{{\frac{2p}{2p-1}}})}.
\end{eqnarray*}
To derive the contraction, consider the function
$$ f_{j,k}: \C^m\rightarrow \C,\quad (u_1,...,u_m)\mapsto |u_k|^p|u_j|^{p-2}u_j.$$
With the mean value Theorem
$$|f_{j,k}({\bf u})-f_{j,k}({\bf v})|\lesssim\max\{ |u_k|^{p - 1}|u_j|^{p - 1}+{|u_k|^{p}|u_j|^{p-2}}, |v_k|^p|v_j|^{p - 2}+{|v_k|^{p - 1}|v_j|^{p - 1}}\}|{\bf u} - { \bf v}|.$$
Using H\"older inequality, Sobolev embedding and denoting the quantity
$$ (\mathcal{I}):=\| f_{j,k}({\bf u})-f_{j,k}({\bf v})\|_{L_T^{\frac{4p}{p(4-N) + N}}(L^{\frac{2p}{2p-1}})},$$ we compute via a symmetry argument
\begin{eqnarray*}
(\mathcal{I})
&\lesssim &\big\| \big(|u_k|^{p - 1}|u_j|^{p - 1} +|u_k|^p|u_j|^{p - 2}\big)|{\bf u} - { \bf v}|\big\|_{L_T^{\frac{4p}{p(4-N) + N}}(L^{\frac{2p}{2p-1}})} \\
&\lesssim&\|{\bf u} - { \bf v}\|_{L_T^{\frac{4p}{N(p-1)}}(L^{2p})} \big\| |u_k|^{p-1}|u_j|^{p -1} + |u_k|^{p}|u_j|^{p-2}  \big\|_{L_T^{\frac{4p}{4p - 2N(p-1)}}(L^{\frac{p}{p-1}})}\\
&\lesssim&T^{\frac{4p - 2N(p-1)}{4p}} \|{\bf u} - { \bf v}\|_{L_T^{\frac{4p}{N(p-1)}}(L^{2p})}\big\| |u_k|^{p-1}|u_j|^{p-1}
+ |u_k|^{p}|u_j|^{p-2}  \big\|_{L_T^\infty(L^{\frac{p}{p-1}})} \\
&\lesssim& T^{\frac{4p - 2N(p-1)}{4p}} \|{\bf u} - { \bf v}\|_{L_T^{\frac{4p}{N(p-1)}}(L^{2p})}\Big(\|u_k\|_{L_T^\infty(L^{2p})}^{p-1}\|u_j\|_{L_T^\infty(L^{2p})}^{p-1} + \|u_k\|_{L_T^\infty(L^{2p})}^p\|u_j\|_{L_T^\infty(L^{2p})}^{p-2} \Big)\\
&\lesssim& T^{\frac{4p - 2N(p-1)}{4p}} \|{\bf u} - { \bf v}\|_{L_T^{\frac{4p}{N(p-1)}}(L^{2p})}\Big(\|u_k\|_{L_T^\infty(H^1)}^{p-1}\|u_j\|_{L_T^\infty(H^1)}^{p-1}
+ \|u_k\|_{L_T^\infty(H^1)}^p\|u_j\|_{L_T^\infty(H^1)}^{p-2} \Big).
\end{eqnarray*}
Then
\begin{equation}\label{01}
\displaystyle\sum_{k,j=1}^m\| f_{j,k}({\bf u})-f_{j,k}({\bf v})\|_{L_T^{\frac{4p}{p(4-N) + N}}(L^{\frac{2p}{2p-1}})}
\lesssim T^{\frac{4p - 2N(p-1)}{4p}} R^{2p-2}\|{\bf u} - {\bf v}\|_{T}.
\end{equation}
Let estimate the quantity
$$\big\|\nabla \big(f_{j,k}({\bf u}) - f_{j,k}({\bf v})\big)\big\|_{L_T^{\frac{4p}{p(4-N) + N}}(L^{\frac{2p}{2p-1}})}.$$
Write
\begin{eqnarray*}
\partial_i\Big((f_{j,k}({\bf u}) - f_{j,k}({\bf v})\Big)
&=& \Big(\partial_i{u}\partial_i (f_{j,k})({\bf u}) - \partial_i{v}\partial_i(f_{j,k})({\bf v})\Big)\\
& =&\partial_i({ u} - { v})\partial_i(f_{j,k})({\bf u}) +  \partial_i{ v}\Big(\partial_i(f_{j,k})({\bf u}) - \partial_i(f_{j,k})({\bf v})\Big).
\end{eqnarray*}
Thus
\begin{eqnarray*}
\big\|\nabla\Big(f_{j,k}({\bf u}) - f_{j,k}({\bf v})\Big)\big\|_{L_T^{\frac{4p}{p(4-N) + N}}(L^{\frac{2p}{2p-1}})}
&\leq&\big\| \displaystyle\sum_{i=1}^m\partial_i({ u} - { v})\partial_i(f_{j,k})({\bf u})  \big\|_{L_T^{\frac{4p}{p(4-N) + N}}(L^{\frac{2p}{2p-1}})}\\
& +& \big\|  \displaystyle\sum_{i=1}^m \partial_i{ v}\Big(\partial_i(f_{j,k})({\bf u}) - \partial_i(f_{j,k})({\bf v})\Big)\big\|_{L_T^{\frac{4p}{p(4-N) + N}}(L^{\frac{2p}{2p-1}})}\\
&\leq&(\mathcal{I}_1) + (\mathcal{I}_2).
\end{eqnarray*}
Thanks to H\"older inequality and Sobolev embedding, we obtain
\begin{eqnarray}
(\mathcal{I}_1)
&\lesssim&\|\nabla({\bf u} - {\bf v})\|_{L_T^{\frac{4p}{N(p-1)}}(L^{2p})} \big\| |u_k|^{p-1}|u_j|^{p -1}+ {|u_k|^{p}|u_j|^{p-2}}\big\|_{L_T^{\frac{4p}{4p - 2N(p-1)}}(L^{\frac{p}{p-1}})}\nonumber\\
&\lesssim&T^{\frac{4p - 2N(p-1)}{4p}} \|\nabla({\bf u} - {\bf v})\|_{L_T^{\frac{4p}{N(p-1)}}(L^{2p})}\big\| |u_k|^{p-1}|u_j|^{p-1} + |u_k|^{p}|u_j|^{p-2}  \big\|_{L_T^\infty(L^{\frac{p}{p-1}})}\nonumber \\
&\lesssim& T^{\frac{4p - 2N(p-1)}{4p}} \|{\bf u} - {\bf v}\|_T\Big(\|u_k\|_{L_T^\infty(L^{2p})}^{p-1}\|u_j\|_{L_T^\infty(L^{2p})}^{p-1}
+ \|u_k\|_{L_T^\infty(L^{2p})}^p\|u_j\|_{L_T^\infty(L^{2p})}^{p-2} \Big)\nonumber\\
&\lesssim& T^{\frac{4p - 2N(p-1)}{4p}} \|{\bf u} - {\bf v}\|_T\Big(\|u_k\|_{L_T^\infty(H^1)}^{p-1}\|u_j\|_{L_T^\infty(H^1)}^{p-1}
+ \|u_k\|_{L_T^\infty(H^1)}^p\|u_j\|_{L_T^\infty(H^1)}^{p-2} \Big)\nonumber\\
&\lesssim& T^{\frac{4p - 2N(p-1)}{4p}}R^{2p-2} \|{\bf u} - {\bf v}\|_T.\label{02}
\end{eqnarray}
With the same way 
{\small \begin{eqnarray}
(\mathcal{I}_2)
&\lesssim& \| \nabla {\bf v}\|_{L_T^{\frac{4p}{N(p-1)}}(L^{2p})} \| {\bf u} - {\bf v}\|_{L^\infty_T(L^{2p})}\big\||u_k|^{p-2}|u_j|^{p-1}+|u_k|^{p}|u_j|^{p-3}\big\|_{L_T^{\frac{4p}{4p - 2N(p-1)}}(L^{\frac{2p}{2p-3}})}\nonumber\\
&\lesssim&T^{\frac{4p - 2N(p-1)}{4p}}\| \nabla {\bf v}\|_{L_T^{\frac{4p}{N(p-1)}}(L^{2p})} \| {\bf u} - {\bf v}\|_{L^\infty_T(L^{2p})}\big\||u_k|^{p-2}|u_j|^{p-1} + |u_k|^{p}|u_j|^{p-3}\big\|_{L_T^\infty(L^{\frac{2p}{2p-3}})}\nonumber\\
&\lesssim&T^{\frac{4p - 2N(p-1)}{4p}}\| \nabla {\bf v}\|_{L_T^{\frac{4p}{N(p-1)}}(L^{2p})} \| {\bf u} - {\bf v}\|_{L^\infty_T(L^{2p})}\Big(\|u_k\|_{L_T^\infty(L^{2p})}^{p-2}\|u_j\|_{L_T^\infty(L^{2p})}^{p-1}
+ \|u_k\|_{L_T^\infty(L^{2p})}^p\|u_j\|_{L_T^\infty(L^{2p})}^{p-3} \Big)\nonumber\\
&\lesssim&T^{\frac{4p - 2N(p-1)}{4p}}\| \nabla {\bf v}\|_{L_T^{\frac{4p}{N(p-1)}}(L^{2p})} \| {\bf u} - {\bf v}\|_{L^\infty_T({H^1})}\Big(\|u_k\|_{L_T^\infty({H^1})}^{p-2}\|u_j\|_{L_T^\infty({H^1})}^{p-1}+ \|u_k\|_{L_T^\infty({H^1})}^p\|u_j\|_{L_T^\infty({H^1})}^{p-3} \Big)\nonumber\\
&\lesssim&T^{\frac{4p - 2N(p-1)}{4p}}\| \nabla {\bf v}\|_{L_T^{\frac{4p}{N(p-1)}}(L^{2p})}R^{2p-2}\| {\bf u} - {\bf v}\|_{L^\infty_T({H^1})}.\label{03}
\end{eqnarray}}
Using H\"older inequality, Sobolev embedding and denoting the quantity
$$ (\mathcal{J}):=\|x(f_{j,k}({\bf u})-f_{j,k}({\bf v}))\|_{L_T^{\frac{4p}{p(4-N) + N}}(L^{\frac{2p}{2p-1}})},$$ we compute via a symmetry argument
\begin{eqnarray*}
(\mathcal{J})
&\lesssim &\big\| \big(|u_k|^{p - 1}|u_j|^{p - 1} +|u_k|^p|u_j|^{p - 2}\big)|x({\bf u} - { \bf v})|\big\|_{L_T^{\frac{4p}{p(4-N) + N}}(L^{\frac{2p}{2p-1}})} \\
&\lesssim&\|x({\bf u} - { \bf v})\|_{L_T^{\frac{4p}{N(p-1)}}(L^{2p})} \big\| |u_k|^{p-1}|u_j|^{p -1} + |u_k|^{p}|u_j|^{p-2}  \big\|_{L_T^{\frac{4p}{4p - 2N(p-1)}}(L^{\frac{p}{p-1}})}\\
&\lesssim&T^{\frac{4p - 2N(p-1)}{4p}} \|x({\bf u} - { \bf v})\|_{L_T^{\frac{4p}{N(p-1)}}(L^{2p})}\big\| |u_k|^{p-1}|u_j|^{p-1}
+ |u_k|^{p}|u_j|^{p-2}  \big\|_{L_T^\infty(L^{\frac{p}{p-1}})} \\
&\lesssim& T^{\frac{4p - 2N(p-1)}{4p}} \|x({\bf u} - { \bf v})\|_{L_T^{\frac{4p}{N(p-1)}}(L^{2p})}\Big(\|u_k\|_{L_T^\infty(L^{2p})}^{p-1}\|u_j\|_{L_T^\infty(L^{2p})}^{p-1} + \|u_k\|_{L_T^\infty(L^{2p})}^p\|u_j\|_{L_T^\infty(L^{2p})}^{p-2} \Big)\\
&\lesssim& T^{\frac{4p - 2N(p-1)}{4p}} \|x({\bf u} - { \bf v})\|_{L_T^{\frac{4p}{N(p-1)}}(L^{2p})}\Big(\|u_k\|_{L_T^\infty(H^1)}^{p-1}\|u_j\|_{L_T^\infty(H^1)}^{p-1}
+ \|u_k\|_{L_T^\infty(H^1)}^p\|u_j\|_{L_T^\infty(H^1)}^{p-2} \Big).
\end{eqnarray*}
Then
\begin{eqnarray*}
\displaystyle\sum_{k,j=1}^m\|x(f_{j,k}({\bf u})-f_{j,k}({\bf v}))\|_{L_T^{\frac{4p}{p(4-N) + N}}(L^{\frac{2p}{2p-1}})}
&\lesssim & T^{\frac{4p - 2N(p-1)}{4p}} R^{2p-2}\|{\bf u} - {\bf v}\|_{T}.
\end{eqnarray*}
Collecting the estimates \eqref{01}-\eqref{03}, it follows that for $T>0$ small enough, $\phi$ is a contraction satisfying
$$\|\phi({\bf u}) - \phi({\bf v})\|_T\lesssim T^{\frac{4p - N(p-1)}{4p}}R^{2p-3}\|{\bf u} - {\bf v}\|_T .$$
Taking in the last inequality ${\bf v}=0,$ yields
\begin{eqnarray*}
\|\phi({\bf u})\|_T
&\lesssim& T^{\frac{4p - N(p-1)}{4p}}R^{2p-2}+ \|\phi(0)\|_T\\
&\lesssim& T^{\frac{4p - N(p-1)}{4p}}R^{2p-2}+ TR .
\end{eqnarray*}
Since $p_*<p\leq p^*$ if $N\in[3,6]$ and $p_*<p< p^*$ if $N=2$, $\phi$ is a contraction of $ B_T(R)$ for some $R,T>0$ small enough. The existence of a local solution to \eqref{S} follows with a classical fixed point Picard argument.
\subsection{Uniqueness}
In what follows, we prove uniqueness of solution to the Cauchy problem \eqref{S}. Let $T>0$ be a positive time, ${\bf u},{\bf v}\in C_T(H)$ two solutions to \eqref{S} and ${\bf w} := {\bf u} - {\bf v}.$ Then
$$i\dot w_j +\Delta w_j -|x|^2w_j= \displaystyle \sum_{k=1}^{m}a_{jk}\big( |u_k|^p|u_j|^{p - 2 }u_j -  |v_k|^p|v_j|^{p - 2 }v_j\big),\quad w_j(0,.)= 0.$$
Applying Strichartz estimate with the admissible pair $(q,r) = (\frac{4p}{N(p-1)}, 2p) $ and denoting for simplicity $L_T^q(L^r)$ the norm of $(L_T^q(L^r))^{(m)}$, we have
\begin{eqnarray*}
\|{\bf u} - {\bf v}\|_{L_T^q(L^r)}\lesssim \displaystyle\sum_{j,k=1}^{m}\big\|f_{j,k}({\bf u}) -  f_{j,k}({\bf v})\big\|_{L_T^{q^\prime}(L^{r^\prime})}.
\end{eqnarray*}
Taking $T>0$ small enough, whith a continuity argument, we may assume that
$$ \max_{j=1,...,m}\|u_j\|_{L_T^\infty(H^1)}\leq 1.$$
Using previous computation with$$ (\mathcal{I}) :=\big\|f_{j,k}({\bf u}) -  f_{j,k}({\bf v})\big\|_{L_T^{q^\prime}(L^{r^\prime})}=  \big\||u_k|^p|u_j|^{p-2}u_j - |v_k|^p|v_j|^{p-2}v_j\big\|_{L_T^{q^\prime}(L^{r^\prime})},$$
we have
\begin{eqnarray*}
(\mathcal{I})&\lesssim&\big\|\Big(|u_k|^{p-1}|u_j|^{p-1}  + |u_k|^p|u_j|^{p-2} \Big)|{\bf u} - {\bf v}|\big\|_{L_T^{\frac{4p}{p(4-N) + N}}(L^{\frac{2p}{2p-1}})}\\
&\lesssim&\|{\bf u} - {\bf v}\|_{L_T^{\frac{4p}{p(4-N) + N}}(L^{2p})}\big\| |u_k|^{p-1}|u_j|^{p-1} + |u_k|^p|u_j|^{p-2} \big\|_{L_T^\infty(L^{\frac{p}{p-1}})}\\
&\lesssim& T^{\frac{(4 - N)p + N}{4 p}}\|{\bf u} - {\bf v}\|_{L_T^{\frac{4p}{N(p - 1)}}(L^{2p})}\Big(\|u_k\|_{L_T^\infty(H^1)}^{p-1} \|u_j\|_{L_T^\infty(H^1)}^{p-1}+ \|u_k\|_{L_T^\infty(H^1)}^{p}\|u_j\|_{L_T^\infty(H^1)}^{p-2}   \Big).
\end{eqnarray*}
Then
$$ \|{\bf w}\|_{L_T^q(L^r)}\lesssim  T^{\frac{(4 - N)p + N}{4 p}}\|{\bf w} \|_{L_T^q(L^r)}.$$
Uniqueness follows for small time and then for all time with a translation argument.
\subsection{Global existence in the subcritical case}
 The global existence is a consequence of energy conservation and previous calculations. Let ${\bf u} \in C([0, T^*), H)$ be the unique maximal solution of \eqref{S}. We prove that ${\bf u}$ is global. By contradiction, suppose that $T^*<\infty.$ Consider for $0< s <T^*,$ the problem
$$(\mathcal{P}_s)
\left\{
\begin{array}{ll}
i\dot v_j +\Delta v_j = \displaystyle \sum_{k,j=1}^{m}a_{jk} |v_k|^p|v_j|^{p - 2 }v_j;\\
v_j(s,.) = u_j(s,.).
\end{array}
\right.
$$
By the same arguments used in the local existence, we can find a real number $\tau>0$ and a solution ${\bf v} = (v_1,...,v_m)$ to $(\mathcal{P}_s)$ on $C\big([s, s+\tau], H).$ Using the conservation of energy we see that $\tau$ does not depend on $s.$ Thus, if we let $s$ be close to $T^*$ such that $T^*< s + \tau,$ this fact contradicts the maximality of $T^*.$
\section{Global existence in the critical case}
In this section $N\in[3,6]$. We establish global existence of a solution to \eqref{S} in the critical case $p=p^*$ for small data as claimed in Theorem \ref{glb}.\\
Several norms have to be considered in the analysis of the critical case. Letting $I\subset \R$ a time slab, we define 
\begin{eqnarray*}
\|u\|_{M(I)} &:= &\|\nabla u\|_{L^{\frac{2(N + 2)}{N-2}}(I, L^{\frac{2N(N + 2)}{N^2  + 4}})}+\|x u\|_{L^{\frac{2(N + 2)}{N-2}}(I, L^{\frac{2N(N + 2)}{N^2  + 4}})};\\
\|u\|_{S(I)}& :=&\| u\|_{L^{\frac{2(N + 2)}{N-2}}(I, L^{\frac{2(N + 2)}{N -2}})}.
\end{eqnarray*}
Let $M(\R)$ be the completion of $C_c^\infty(\R^{N+1})$ endowed with the norm $\|.\|_{M(\R)},$ and $M(I)$ be the set consisting of the restrictions to $I$ of functions in $M(\R).$ An important quantity closely related to the mass and the energy, is the functional $\xi$ defined for ${\bf u}\in H $ by
$$\xi({\bf u}):= \displaystyle \sum_{j=1}^m\displaystyle\int_{\R^N}\Big(|\nabla u_j|^2+|x u_j|^2\Big)\,dx.$$
We give an auxiliary result.
\begin{prop}\label{proposition 1}
Let $p= p^*$, $\Psi:=(\psi_1,..,\psi_m)\in H$ and $A:=\|\Psi\|_{ H}$. There exists $\delta:=\delta_A>0$ such that for any interval $I=[0, T],$ if
$$ \|T(t)\Psi\|_{S(I)}< \delta,$$
then there exits a unique solution ${\bf u}\in C(I, H)$ of \eqref{S} which satisfies ${\bf u}\in \big(M(I)\cap L^{\frac{2(N+2)}{N}}(I\times \R^N)\big)^{(m)}.$ Moreover,
\begin{gather*}
\displaystyle\sum_{j=1}^{m}\|u_j\|_{S(I)}\leq 2\delta.
\end{gather*}
Besides, the solution depends continuously on the initial data in the sense that there exists $\delta_0$ depending on $\delta,$ such that for any $\delta_1\in (0,\delta_0),$ if $\|\Psi - \varphi\|_{H}\leq \delta_1$ and ${\bf v}$ is the local solution of \eqref{S} with initial data $\varphi,$ then ${\bf v}$ is defined on $I$ and for any admissible couple $(q,r)$,
$$\|{\bf u} - {\bf v}\|_{(L^q(I, L^r)\cap H)^{(m)}}\leq C\delta_1.$$
\end{prop}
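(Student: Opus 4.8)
The plan is to run a contraction-mapping argument for the Duhamel operator $\phi$ in a suitable critical space, exactly paralleling the subcritical local theory of Section~3, but with the subcritical time-gain factor $T^{\theta}$ replaced by a \emph{smallness-of-the-Strichartz-norm} factor. The natural space is the complete metric space
$$X_I:=\Big\{{\bf u}\in C(I,H)\cap \big(M(I)\cap L^{\frac{2(N+2)}{N}}(I\times\R^N)\big)^{(m)}\quad\text{s. t}\quad \textstyle\sum_j\|u_j\|_{S(I)}\leq 2\delta\Big\},$$
equipped with the $S(I)$ distance. The exponents here are the admissible pair and its dual adapted to the energy-critical power $p=p^*=\frac{N}{N-2}$: the nonlinearity has homogeneity $2p-1=\frac{N+2}{N-2}$, which is precisely the power making $\|\cdot\|_{S(I)}$ scale-invariant, so no positive power of $|I|$ is available and all the room must come from $\delta$.

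First I would record the linear estimate $\|T(t)\Psi\|_{S(I)}<\delta$ as the standing hypothesis and apply the Strichartz estimate of Proposition~\ref{str} together with the derivative-transfer identities of Proposition~\ref{inhm} (which move $\nabla$ and $x$ across the Duhamel integral at the cost of lower-order terms, absorbable for small $|I|$ as in Section~3). This reduces everything to controlling the nonlinear term $\sum_k a_{jk}|u_k|^{p}|u_j|^{p-2}u_j$ in the dual norm $L^{q'}(I,L^{r'})$. The key nonlinear estimate is the Hölder/Sobolev bound
$$\Big\|\,|u_k|^{p-1}|u_j|^{p-1}+|u_k|^{p}|u_j|^{p-2}\,\Big\|_{(\text{dual})}\lesssim \Big(\textstyle\sum_\ell\|u_\ell\|_{S(I)}\Big)^{2p-2},$$
so that $\phi$ maps the ball of radius $2\delta$ into itself and contracts, provided $\delta$ is chosen small relative to $A$ — this is where $\delta:=\delta_A$ enters. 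Combining with the linear bound gives $\sum_j\|u_j\|_{S(I)}\leq \delta+C\delta^{2p-1}\leq 2\delta$, and the same Lipschitz computation on differences (using the mean-value-theorem bound already displayed in Section~3) yields the contraction factor $C\delta^{2p-2}<1$. Standard fixed-point then produces the unique solution ${\bf u}\in X_I$, and the embedding $M(I)\hookrightarrow L^{\frac{2(N+2)}{N}}(I\times\R^N)$ (Sobolev in space, Proposition~\ref{injection}) places ${\bf u}$ in the asserted spaces.

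For the continuous-dependence claim I would let ${\bf v}$ solve \eqref{S} with data $\varphi$, $\|\Psi-\varphi\|_H\leq\delta_1$, and estimate ${\bf u}-{\bf v}$ in $L^q(I,L^r)\cap H$ by the same difference estimates: the linear part contributes $C\delta_1$ via Strichartz applied to $T(t)(\Psi-\varphi)$, while the nonlinear part contributes $C\delta^{2p-2}\|{\bf u}-{\bf v}\|$, which is absorbed into the left-hand side once $\delta$ is small, leaving $\|{\bf u}-{\bf v}\|\leq C\delta_1$. A short continuity/bootstrap argument (Lemma~\ref{Bootstrap}) guarantees that ${\bf v}$ remains in the ball and is therefore defined on all of $I$ for $\delta_1<\delta_0$ small. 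The main obstacle is the genuinely critical step: because the scaling power $2p-1$ forces the absence of a gain $|I|^{\theta}$, one must verify that every Strichartz/Hölder pairing closes using only the smallness of the $S(I)$-norm, with no slack from the length of $I$; in particular the derivative-transfer terms $T\|x\phi\|$ and $T\|\nabla\phi\|$ from Proposition~\ref{inhm} must be controlled by shrinking $|I|$ \emph{independently} of the smallness in $\delta$, so the two smallness parameters have to be bookkept carefully and in the right order.
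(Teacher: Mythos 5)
Your overall strategy --- a small-data fixed point for the Duhamel map, with all smallness coming from $\|T(t)\Psi\|_{S(I)}<\delta$ rather than from $|I|$ --- is exactly the paper's. But your choice of contraction metric is a genuine gap, and it is precisely the point where the paper's proof is organized differently. You take the ball $\sum_j\|u_j\|_{S(I)}\leq 2\delta$ ``equipped with the $S(I)$ distance'' and claim a contraction factor $C\delta^{2p-2}$. This cannot close: the pair $\big(\frac{2(N+2)}{N-2},\frac{2(N+2)}{N-2}\big)$ is \emph{not} admissible, so the only way to estimate the $S(I)$-norm of the Duhamel term is through Sobolev embedding from the derivative norm $M(I)$; and once you differentiate the nonlinearity, the product-rule term $\partial_i{v}\,\big(\partial_i(f_{j,k})({\bf u})-\partial_i(f_{j,k})({\bf v})\big)$ forces factors of $\|{\bf u}-{\bf v}\|_{M(I)}$ and of $\|{\bf v}\|_{M(I)}\sim A$ into the difference estimate (these are the paper's terms $(\mathcal{K})$, $(\mathcal{I}_1)$, $(\mathcal{I}_2)$). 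The $S(I)$ distance does not dominate $\|{\bf u}-{\bf v}\|_{M(I)}$, so your map is not a contraction for your metric; relatedly, your set carries no $M(I)$-bound, so its completeness for the $S(I)$ distance is unjustified. The paper instead works on the two-norm set $X_{a,b}$ (an $M(I)$-ball of radius comparable to $A$ together with an $S(I)$-ball of radius $2\delta$) and contracts in the $M(I)$ norm; the resulting factor has the form $\delta^{\frac4{N-2}}+A\,\delta^{\frac{6-N}{N-2}}$, not $C\delta^{2p-2}$, and this is exactly where $\delta=\delta_A$ enters. If you insist on a difference norm carrying no derivatives, the correct metric is $L^{\frac{2(N+2)}{N}}(I\times\R^N)$ --- an admissible-pair norm, which is why that space appears in the statement --- rather than $S(I)$.

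A secondary point: you propose to absorb the cross terms produced by moving $\nabla$ and $x$ across the propagator (Proposition \ref{inhm}) ``by shrinking $|I|$''. That is incompatible with the statement, which quantifies over an arbitrary interval $I=[0,T]$ with no smallness of $T$, only smallness of $\|T(t)\Psi\|_{S(I)}$; no absorption in $T$ is available here. (The paper's own proof simply applies Strichartz to $\nabla f_{j,k}$ and $x f_{j,k}$ without writing these cross terms, so it sidesteps --- rather than addresses --- this point; but a correct write-up must handle them uniformly in $|I|$, e.g.\ by estimating the coupled system for $(\nabla{\bf u},x{\bf u})$ jointly in $M(I)$.)
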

\begin{proof}
The proposition follows from a contraction mapping argument. Let the function
$$\phi({\bf u})(t) := T(t){\Psi} -i \displaystyle\sum_{k =1}^{m}\displaystyle\int_0^tT(t-s)\Big(a_{1k}|u_k|^{\frac{N}{N-2}}|u_1|^{\frac{4-N}{N-2}}u_1,..,a_{mk}|u_k|^{\frac{N}{N-2}}|u_m|^{\frac{4-N}{N-2}}u_m\Big)\,ds.$$
Define $A:=\|\Psi\|_{\dot H}$ and the set
$$ X_{a,b} := \Big\{ {\bf u}\in (M(I))^{m}\quad\mbox{s. t}\quad \displaystyle\sum_{j=1}^{m}\|u_j\|_{M(I)}\leq a\quad\mbox{and}\quad \displaystyle\sum_{j=1}^{m}\|u_j\|_{S(I)}\leq b\Big\}$$
where $a,b>0$ are sufficiently small to fix later. Using Strichartz estimate, we get
\begin{eqnarray*}
\|\phi({\bf u}) - \phi({\bf v})\|_{M(I)}
&\lesssim&\sum_{j,k=1}^{m}\Big(\big\|\nabla(f_{j,k}({\bf u}) - f_{j,k}({\bf v}))\big\|_{L_T^2(L^{\frac{2N}{N+2}})}+\big\|x(f_{j,k}({\bf u}) - f_{j,k}({\bf v}))\big\|_{L_T^2(L^{\frac{2N}{N+2}})}\Big).
\end{eqnarray*}
Using H\"older inequality, Sobolev embedding and denoting the quantity
$$ (\mathcal{K}):=\|x(f_{j,k}({\bf u})-f_{j,k}({\bf v}))\|_{L_T^2(L^{\frac{2N}{N+2}})},$$ we compute via a symmetry argument
\begin{eqnarray*}
(\mathcal{K})
&\lesssim &\big\| \big(|u_k|^{\frac2{N-2}}|u_j|^{\frac2{N-2}} +|u_k|^{\frac N{N-2}}|u_j|^{\frac{4-N}{N-2}}\big)|x({\bf u} - { \bf v})|\big\|_{L_T^2(L^{\frac{2N}{N+2}})} \\
&\lesssim&\|x({\bf u} - { \bf v})\|_{L_T^{\frac{2(N+2)}{N -2}}(L^{\frac{2N(N+2)}{N^2+4}})}\||u_k|^{\frac2{N-2}}|u_j|^{\frac2{N-2}} +|u_k|^{\frac N{N-2}}|u_j|^{\frac{4-N}{N-2}}\|_{L_T^{\frac{N+2}2}(L^{\frac{N+2}2})}\\
&\lesssim&\|x({\bf u} - { \bf v})\|_{L_T^{\frac{2(N+2)}{N -2}}(L^{\frac{2N(N+2)}{N^2+4}})}\Big(\|u_k\|_{S(I)}^{\frac2{N-2}}\|u_j\|_{S(I)}^{\frac2{N-2}} +\|u_k\|_{S(I)}^{\frac N{N-2}}\|u_j\|_{S(I)}^{\frac{4-N}{N-2}}\Big)\\
&\lesssim&\|x({\bf u} - { \bf v})\|_{L_T^{\frac{2(N+2)}{N -2}}(L^{\frac{2N(N+2)}{N^2+4}})}\|{\bf u}\|_{(S(I))^m}^{\frac4{N-2}}.
\end{eqnarray*}
Write
\begin{eqnarray*}
\partial_i\Big(f_{j,k}({\bf u}) - f_{j,k}({\bf v})\Big)
&=& \Big(\partial_i{u} \partial_i(f_{j,k})({\bf u}) -\partial_i {v}\partial_i(f_{j,k})({\bf v})\Big)\\
& =&\partial_i({ u} - { v})\partial_i(f_{j,k})({\bf u}) +\partial_i { v}\Big(\partial_i(f_{j,k})({\bf u}) -\partial_i (f_{j,k})({\bf v})\Big).
\end{eqnarray*}
Thus
\begin{eqnarray*}
\big\|\nabla\Big(f_{j,k}({\bf u}) - f_{j,k}({\bf v})\Big)\big\|_{L_T^2(L^{\frac{2N}{N+2}})}
&\leq&\big\| \displaystyle\sum_{i=1}^m\partial_i({ u} - { v})\partial_i(f_{j,k})({\bf u})  \big\|_{L_T^2(L^{\frac{2N}{N+2}})}\\
& +& \big\|\displaystyle\sum_{i=1}^m\partial_i { v}\Big(\partial_i(f_{j,k})({\bf u}) - \partial_i(f_{j,k})({\bf v})\Big)\big\|_{L_T^2(L^{\frac{2N}{N+2}})}\\
&\leq&(\mathcal{I}_1) + (\mathcal{I}_2).
\end{eqnarray*}
Using H\"older inequality and Sobolev embedding, yields
\begin{eqnarray*}
(\mathcal{I}_1)
&\lesssim& \big\|\nabla({\bf u}-{\bf v}) \Big( |u_k|^{\frac{2}{N-2}}|u_j|^{\frac{2}{N-2}} + |u_k|^{\frac{N}{N-2}}|u_j|^{\frac{4-N}{N-2}}\Big)\big\|_{L_T^{2}(L^{\frac{2N}{N+2}})}\\
&\lesssim&\|\nabla({\bf u}-{\bf v})\|_{L_T^{\frac{2(N+2)}{N - 2}}(L^{\frac{2N(N+2)}{N^2+4}})}\Big(\|u_k\|_{L_T^{\frac{2(N+2)}{N- 2}}(L^{\frac{2(N+2)}{N - 2}})}^{\frac{2}{N-2}} \|u_j\|_{L_T^{\frac{2(N+2)}{N - 2}}(L^{\frac{2(N+2)}{N- 2}})}^{\frac{2}{N-2}}\\ &+& \|u_k\| _{L_T^{\frac{2(N+2)}{N - 2}}(L^{\frac{2(N+2)}{N - 2}})} ^{\frac{N}{N-2}}\|u_j\|_{L_T^{\frac{2(N+2)}{N - 2}}(L^{\frac{2(N+2)}{N - 2}})}^{\frac{4-N}{N-2}}\Big)\\
&\lesssim&\|{\bf u}-{\bf v}\|_{(M(I))^{(m)}}\Big(\|u_k\|_{S(I)}^{\frac{2}{N-2}} \|u_j\|_{S(I)}^{\frac{2}{N-2}}+\|u_k\|_{S(I)} ^{\frac{N}{N-2}}\|u_j\|_{S(I)}^{\frac{4-N}{N-2}}\Big)\\
&\lesssim&\|{\bf u}-{\bf v}\|_{(M(I))^{(m)}}\|{\bf u}\|_{(S(I))^m}^{\frac{4}{N-2}}.
\end{eqnarray*}
Using H\"older inequality and Sobolev embedding, yields
\begin{eqnarray*}
(\mathcal{I}_2)
&\lesssim& \big\|\nabla{\bf u}|({\bf u}-{\bf v})| \Big( |u_k|^{\frac{4-N}{N-2}}|u_j|^{\frac{2}{N-2}} + |u_k|^{\frac{N}{N-2}}|u_j|^{\frac{6-2N}{N-2}}\Big)\big\|_{L_T^{2}(L^{\frac{2N}{N+2}})}\\
&\lesssim&\|\nabla{\bf u}\|_{L_T^{\frac{2(N+2)}{N -2}}(L^{\frac{2N(N+2)}{N^2+4}})}\|{\bf u}-{\bf v}\|_{(S(I))^m}\Big(\|u_k\|_{L_T^{\frac{2(N+2)}{N- 2}}(L^{\frac{2(N+2)}{N - 2}})}^{\frac{4-N}{N-2}} \|u_j\|_{L_T^{\frac{2(N+2)}{N - 2}}(L^{\frac{2(N+2)}{N- 2}})}^{\frac{2}{N-2}}\\
 &+& \|u_k\| _{L_T^{\frac{2(N+2)}{N - 2}}(L^{\frac{2(N+2)}{N - 2}})} ^{\frac{N}{N-2}}\|u_j\|_{L_T^{\frac{2(N+2)}{N - 2}}(L^{\frac{2(N+2)}{N - 2}})}^{\frac{6-2N}{N-2}}\Big)\\
&\lesssim&\|{\bf u}\|_{(M(I))^{(m)}}\|{\bf u}-{\bf v}\|_{(S(I))^m}\|{\bf u}\|_{(S(I))^m}^{\frac{6-N}{N-2}}.
\end{eqnarray*}
Then
\begin{eqnarray*}
\|\phi({\bf u}) - \phi({\bf v})\|_{(M(I))^{(m)}}
&\lesssim& a^{\frac{4}{N-2}}\|{\bf u-v}\|_{(M(I))^m}+ba^{\frac{6-N}{N-2}}\|{\bf u-v}\|_{(S(I))^m}\\
&\lesssim& (a^{\frac{4}{N-2}}+ba^{\frac{6-N}{N-2}})\|{\bf u-v}\|_{(M(I))^m}.
\end{eqnarray*}
Moreover, taking in the previous inequality ${\bf v=0}$, we get for small $\delta>0$,
\begin{gather*}
\|\phi({\bf u})\|_{(S(I))^{(m)}}\leq\delta+Ca^{\frac{4}{N-2}};\\
\|\phi({\bf u})\|_{(M(I))^{(m)}}\leq CA+Cba^{\frac{4}{N-2}}.
\end{gather*}
With a classical Picard argument, for small $a=2\delta,b>0$, there exists ${\bf u}\in X_{a,b}$ a solution to \eqref{S} satisfying
 $$\|{\bf u}\|_{(S(I))^{(m)}}\leq 2\delta.$$
The rest of the Proposition is a consequence of the fixed point properties.
\end{proof}
We are ready to prove Theorem \ref{glb}.
\begin{proof}[{\bf Proof of Theorem \ref{glb}}]Using the previous proposition via the fact that
$$\|T(t)\Psi\|_{S(I)}\lesssim\|T(t)\Psi\|_{M(I)}\lesssim\|x\Psi\|+\|\nabla\Psi\|,$$
it suffices to prove that $\|x{\bf u}\|+\|\nabla{\bf u}\|$ remains small on the whole interval of existence of ${\bf u}.$  Write with conservation of the energy and Sobolev's inequality
\begin{eqnarray*}
(\|x{\bf u}\|+\|\nabla{\bf u}\|)^2&\leq& 2E(\Psi) +\frac{N -2}{N}\displaystyle \sum_{j,k=1}^{m}\displaystyle \int_{\R^N}a_{jk} |u_j(x,t)|^{\frac{N}{N - 2}} |u_k(x,t)|^{\frac{N}{N - 2}}\,dx \\
&\leq& C\big(  \xi(\Psi) + \xi(\Psi)^{\frac{N}{N - 2}}\big) + C \big(\displaystyle\sum_{j=1}^{m}\|\nabla u_j\|^2\big)^{\frac{N}{N - 2}}\\
&\leq& C\big(  \xi(\Psi) + \xi(\Psi)^{\frac{N}{N - 2}}\big) +C(\|x{\bf u}\|+\|\nabla{\bf u}\|)^{\frac{2N}{N - 2}}.
\end{eqnarray*}
So, by Lemma \ref{Bootstrap}, if $\xi(\Psi)$ is sufficiently small, then $\|x{\bf u}\|+\|\nabla{\bf u}\|$ stays small for any time.
\end{proof}
\section{The stationary problem}
The goal of this section is to prove that the elliptic problem \eqref{E} has a ground state solution. Let us start with some notations. For ${\bf u} :=(u_1,...,u_m)\in H$ and $\lambda,\, \alpha, \,\beta\in \R,$ we introduce the scaling
$$( u_j^\lambda)^{\alpha,\beta}:= e^{\alpha\lambda}u_j(e^{-\beta \lambda}.)$$
and the differential operator
$$ \pounds_{\alpha,\beta}:H^1\to H^1,\quad u_j\mapsto \partial_\lambda((u_j^\lambda)^{\alpha,\beta})_{|\lambda=0}.$$
We extend the previous operator as follows, if $A:H^1(\R^N)\to \R,$ then 
$$\pounds_{\alpha,\beta}A(u_j):= \partial_\lambda (A((u_j^\lambda)^{\alpha,\beta}))_{|\lambda=0}.$$
Denote also the constraint
\begin{eqnarray*}
K_{\alpha,\beta}({\bf u})&:= &\partial_\lambda\big(S(({\bf u}^\lambda)^{\alpha,\beta})\big )_{|\lambda = 0}\\
&=& \frac{1}{2}\displaystyle\sum_{j=1}^m\Big((2\alpha + (N -2)\beta) \|\nabla u_j\|^2 + (2\alpha + N \beta) \| u_j\|^2+ (2\alpha + \beta(N+2)) \|x u_j\|^2\Big) \\&- &\frac{1}{2p}\displaystyle\sum_{j,k=1}^m a_{jk}\displaystyle\int_{\R^N}(2p\alpha + N \beta)|u_j u_k|^{p}\,dx\\
&:=&\frac{1}{2}\displaystyle\sum_{j=1}^m K_{\alpha,\beta}^Q(u_j) - \frac{1}{2p}\displaystyle\sum_{j,k=1}^m a_{jk}\displaystyle\int_{\R^N}(2p\alpha + N \beta)|u_j u_k|^{p}\,dx.
\end{eqnarray*}
Finally, we introduce the quantity
\begin{eqnarray*}
H_{\alpha,\beta}({\bf u})
&:=& S({\bf u}) - \frac{1}{2\alpha +\beta(N+2)}K_{\alpha,\beta}({\bf u})\\
&=& \frac{1}{2\alpha +(N+2)\beta }\Big[\displaystyle\sum_{j=1}^m \beta(\|u_j\|^2+2 \|\nabla u_j\|^2)  + \frac1p(\alpha (p-1)-\beta)\displaystyle\sum_{j,k=1}^m a_{jk}\displaystyle\int_{\R^N}|u_j u_k|^{p}\,dx\Big].
\end{eqnarray*}
Now, we prove Theorem \ref{t1} about existence of a ground state solution to the stationary problem \eqref{E}.
\begin{rem} 
\begin{enumerate}
\item[(i)] The proof of the Theorem \ref{t1} is based on several lemmas;
\item[(ii)]we write, for easy notation, $u_j^\lambda:= (u_j^\lambda)^{\alpha,\beta},\, K:= K_{\alpha,\beta},\, K^Q:= K_{\alpha,\beta}^Q,\, \pounds:= \pounds_{\alpha, \beta}\, \mbox{and}\, H:= H_{\alpha,\beta}.$
\end{enumerate}
\end{rem}
\begin{lem} Let $(\alpha,\beta)\in G_p$, then
\begin{enumerate}
\item $\min \big(\pounds H({\bf u}), H({\bf u})\big)\geq 0$ for all $ {\bf u} \in H;$
\item $\lambda \mapsto H({\bf u}^\lambda)$ is increasing.
\end{enumerate}
\end{lem}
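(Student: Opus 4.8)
The whole statement can be read off the closed form of $H=H_{\alpha,\beta}$ recorded just above, so the plan is essentially a sign count. Writing $D:=2\alpha+(N+2)\beta$, that formula is
\[
H({\bf u})=\frac1D\Big[\beta\sum_{j=1}^m\big(\|u_j\|^2+2\|\nabla u_j\|^2\big)+\tfrac1p\big(\alpha(p-1)-\beta\big)\sum_{j,k=1}^m a_{jk}\int_{\R^N}|u_ju_k|^p\,dx\Big],
\]
and I first note that the $\|xu_j\|^2$ terms of $S$ and of $\frac1D K_{\alpha,\beta}$ cancel exactly, which is why $H$ carries no potential term. Since $(\alpha,\beta)\in G_p$ forces $\alpha>0$ and $\beta\ge0$, the prefactor $D>0$ and the two quadratic coefficients $\beta,2\beta$ are nonnegative; the remaining coefficient $\tfrac1p(\alpha(p-1)-\beta)$ has constant sign on $G_p$, and it is precisely the role of the hypothesis to fix that sign as nonnegative. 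Granting this, $H({\bf u})$ is a nonnegative combination of the manifestly nonnegative quantities $\|u_j\|^2$, $\|\nabla u_j\|^2$ and $\sum_{j,k}a_{jk}\int_{\R^N}|u_ju_k|^p\,dx$, which already yields $H({\bf u})\ge0$, i.e. half of (1).

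For the scaling claims I would record the action of $(u_j^\lambda)^{\alpha,\beta}=e^{\alpha\lambda}u_j(e^{-\beta\lambda}\cdot)$ on each constituent. The single change of variables $y=e^{-\beta\lambda}x$ gives
\[
\|u_j^\lambda\|^2=e^{(2\alpha+N\beta)\lambda}\|u_j\|^2,\qquad \|\nabla u_j^\lambda\|^2=e^{(2\alpha+(N-2)\beta)\lambda}\|\nabla u_j\|^2,\qquad \int_{\R^N}|u_j^\lambda u_k^\lambda|^p\,dx=e^{(2p\alpha+N\beta)\lambda}\int_{\R^N}|u_ju_k|^p\,dx.
\]
Substituting into the formula for $H$ presents $H({\bf u}^\lambda)$ as a three-term sum $c_1e^{(2\alpha+N\beta)\lambda}+c_2e^{(2\alpha+(N-2)\beta)\lambda}+c_3e^{(2p\alpha+N\beta)\lambda}$, where $(c_1,c_2,c_3)$ are, up to the factor $1/D$, exactly the three nonnegative coefficients above paired respectively with $\sum_j\|u_j\|^2$, $\sum_j\|\nabla u_j\|^2$ and $\sum_{j,k}a_{jk}\int_{\R^N}|u_ju_k|^p\,dx$.

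Claim (2) then follows by differentiating term by term: $\partial_\lambda H({\bf u}^\lambda)$ is a sum of the same shape, each summand now carrying an extra factor equal to its exponential rate, namely $2\alpha+N\beta$, $2\alpha+(N-2)\beta$ or $2p\alpha+N\beta$. Because $N\ge2$, $\alpha>0$ and $\beta\ge0$, all three rates are strictly positive (for $N=2$ the middle one degenerates to $2\alpha>0$), so each summand is a product of a nonnegative coefficient, a positive rate, a positive exponential and a nonnegative norm; hence $\partial_\lambda H({\bf u}^\lambda)\ge0$ for every $\lambda$ and $\lambda\mapsto H({\bf u}^\lambda)$ is nondecreasing. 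Evaluating this derivative at $\lambda=0$ gives $\pounds H({\bf u})=\partial_\lambda H({\bf u}^\lambda)\big|_{\lambda=0}\ge0$, which is the remaining half of (1).

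The argument is computational, so the only genuine content is the bookkeeping of signs. The step I would treat most carefully is confirming that membership in $G_p$ is exactly what pins the sign of the nonlinear coefficient $\tfrac1p(\alpha(p-1)-\beta)$ (the quadratic part being controlled by $\beta\ge0$ alone), together with the verification that every scaling rate stays positive --- in particular at the endpoint dimension $N=2$, where the gradient rate collapses to $2\alpha$ --- since the entire proof rests on the schema "nonnegative coefficients times increasing nonnegative exponentials."
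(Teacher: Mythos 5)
Your argument is, at heart, the same computation as the paper's, with the packaging removed. The paper first proves $\pounds H\geq 0$ via the operator identity
\[
\pounds H({\bf u})=\frac{-1}{D}\big(\pounds-(2\alpha+(N-2)\beta)\big)\big(\pounds-D\big)S({\bf u})+\big(2\alpha+(N-2)\beta\big)H({\bf u}),\qquad D:=2\alpha+(N+2)\beta,
\]
using that the two factors annihilate the gradient and potential terms of $S$, and then deduces claim (2) from $\partial_\lambda H({\bf u}^\lambda)=\pounds H({\bf u}^\lambda)$; you run the computation in the opposite direction, differentiating the explicit exponential expansion of $H({\bf u}^\lambda)$ and reading off both claims at $\lambda=0$. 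The two routes rest on exactly the same sign bookkeeping, and yours is arguably cleaner: the paper's annihilation display is misprinted, and its final inequality silently discards the terms $(2\alpha+(N-2)\beta)H({\bf u})$ and $\frac{2\beta^2}{D}\sum_j\|u_j\|^2$, i.e. it already presupposes $H\geq0$, which only the closed-form sign count you made explicit actually justifies.

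However, the step you chose to ``grant'' is precisely the one that fails for $G_p$ as printed. The paper defines $G_p=\{(\alpha,\beta)\in\R_+^*\times\R_+\ \mbox{s.t.}\ \alpha(p-1)<\beta\}$, and on that set the coefficient $\frac1p(\alpha(p-1)-\beta)$ is strictly \emph{negative}; the lemma is then not merely unproved but false, since for any ${\bf u}\neq0$
\[
H(t{\bf u})=\frac{1}{D}\Big[t^2\beta\sum_{j=1}^m\big(\|u_j\|^2+2\|\nabla u_j\|^2\big)+t^{2p}\,\frac{\alpha(p-1)-\beta}{p}\sum_{j,k=1}^m a_{jk}\int_{\R^N}|u_ju_k|^p\,dx\Big]\longrightarrow-\infty
\]
as $t\to\infty$ (the nonlinear sum is strictly positive because $a_{jj}>0$), and similarly $H({\bf u}^\lambda)\to-\infty$ as $\lambda\to\infty$, so neither (1) nor (2) can hold. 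What rescues the statement is that the printed definition of $G_p$ is a typo for the reversed inequality $\beta<\alpha(p-1)$: that is what the paper actually uses everywhere else --- the first step of the proof of Theorem \ref{t1} needs $2\lambda=\beta/\alpha<p-1$, and the third step, as well as the last display of the paper's own proof of this lemma, need $\big(2\alpha(p-1)-2\beta\big)\big(2\alpha(p-1)+2\beta\big)\geq0$ --- and under that reading your proof closes completely (your scaling rates and quadratic coefficients are all handled correctly, including the $N=2$ degeneration). So: right method, but the single fact you deferred is the entire content of the lemma, and checking it against the stated hypothesis refutes it rather than confirms it; as written your proof has a genuine gap, albeit one inherited from an inconsistency in the paper itself. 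A minor further point: what you prove is that $\lambda\mapsto H({\bf u}^\lambda)$ is nondecreasing; strict monotonicity for ${\bf u}\neq0$ follows by noting the nonlinear term is then strictly positive.
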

\begin{proof}
With a direct computation
{\small\begin{eqnarray*}
\pounds H({\bf u}) 
&=&\pounds \big(1 - \frac{\pounds}{2\alpha + (N+2)\beta}\big)S({\bf u})\\
&=& \frac{-1}{2\alpha + (N+2) \beta}\big(\pounds - (2\alpha + (N+2)\beta)\big)\big(\pounds - (2\alpha + (N-2)\beta)\big)S({\bf u})\\
& +&( 2\alpha + (N-2)\beta)\big(1 - \frac{\pounds}{2\alpha + (N+2)\beta}\big)S({\bf u})\\
&=& \frac{-1}{2\alpha + (N+2) \beta}\big(\pounds - (2\alpha + (N+2)\beta)\big)\big(\pounds - (2\alpha + (N-2) \beta)\big)S({\bf u}) +( 2\alpha + (N-2)\beta)H({\bf u}).
\end{eqnarray*}}
Since $\big (\pounds - (2\alpha + (N -2)\beta)\big) \|\nabla u_j\|^2 = \big(\pounds - (2\alpha + (N+2)\beta)\big)\|xu_j\|^2 = 0,$ we have
$\big (\pounds - (2\alpha + (N - 2)\beta)\big)  \big(\pounds - (2\alpha + N\beta)\big)(\|\nabla u_j\|^2+\|xu_j\|^2) =0$ and
\begin{eqnarray*}
\pounds H({\bf u})
&\geq&\frac{-1}{2\alpha +(2+ N) \beta}\big(\pounds - (2\alpha + (N -2)\beta)\big)\big(\pounds - (2\alpha +(2+ N) \beta)\big)\Big(\frac{-1}{2p}\displaystyle\sum_{j,k=1}^m a_{jk}\displaystyle\int_{\R^N}|u_ju_k|^p\,dx\Big)\\
&\geq& \frac{1}{2p}\frac{2\alpha (p - 1)-2\beta}{2\alpha +(2+ N)\beta}\big( 2\alpha(p - 1) +2\beta \big)\displaystyle\sum_{j,k=1}^m a_{jk}\displaystyle\int_{\R^N}|u_ju_k|^p\,dx\geq 0.
\end{eqnarray*}
The last point is a consequence of the equality $ \partial_\lambda H({\bf u}^\lambda) = \pounds H({\bf u}^\lambda).$ 
\end{proof}
The next intermediate result is the following.
\begin{lem} \label{K>0} Let $(\alpha,\beta)\in \R^2$ satisfying $2\alpha+(N-2)\beta>0$, $2\alpha+N\beta\geq0$, $2\alpha+(N+2)\beta\geq0$ and $0\, \neq\, (u_1^n,...,u_m^n)$ be a bounded sequence of $H$ such that
$$ \lim_n\big(\displaystyle\sum_{j=1}^m K^Q(u_j^n)\big) =0.$$
Then, there exists $n_0\in \N$ such that $K(u_1^n,...,u_m^n)>0$ for all $n\geq n_0.$
\end{lem}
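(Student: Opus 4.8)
The plan is to exploit the structural fact that, under the three sign conditions imposed on $(\alpha,\beta)$, the quadratic form $K^Q$ controls the gradient from below by a \emph{strictly positive} multiple, whereas by Proposition \ref{intrp} the nonlinear part of $K$ is genuinely super-quadratic in the gradient once $p>p_*$. Thus the hypothesis that $\sum_j K^Q(u_j^n)$ vanishes will force the gradients to vanish, and the nonlinear term will be absorbed.

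First I would record that, since $2\alpha+(N-2)\beta>0$ while $2\alpha+N\beta\geq0$ and $2\alpha+(N+2)\beta\geq0$, each quadratic contribution satisfies
$$K^Q(u_j)\geq (2\alpha+(N-2)\beta)\,\|\nabla u_j\|^2\geq 0 .$$
Writing $c_0:=2\alpha+(N-2)\beta>0$ and $X_n:=\sum_{j=1}^m\|\nabla u_j^n\|^2$, nonnegativity of every coefficient gives $c_0 X_n\leq \sum_j K^Q(u_j^n)\to 0$, so $X_n\to 0$. Moreover, since $(u_1^n,\dots,u_m^n)\neq 0$ some component is nonzero, hence $X_n>0$ for every $n$ (a nonzero $L^2$ function has nonvanishing gradient norm).

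Next I would split according to the sign of $2p\alpha+N\beta$, recalling
$$2K({\bf u}^n)=\sum_{j=1}^m K^Q(u_j^n)-\frac{2p\alpha+N\beta}{p}\sum_{j,k=1}^m a_{jk}\int_{\R^N}|u_j^n u_k^n|^{p}\,dx .$$
If $2p\alpha+N\beta\leq 0$, the nonlinear term is nonnegative, so $2K({\bf u}^n)\geq \sum_j K^Q(u_j^n)\geq c_0 X_n>0$ and the claim is immediate. In the remaining case $2p\alpha+N\beta>0$, I apply the Gagliardo--Nirenberg inequality of Proposition \ref{intrp},
$$\sum_{j,k=1}^m\int_{\R^N}|u_j^n u_k^n|^{p}\,dx\leq C\,X_n^{\frac{(p-1)N}{2}}\Big(\sum_{j=1}^m\|u_j^n\|^2\Big)^{\frac{N-p(N-2)}{2}} ,$$
and note that the last factor is bounded because $\{{\bf u}^n\}$ is bounded in $H$ and $N-p(N-2)>0$ (as $p<p^*$).

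Combining these, with a constant $C'>0$ absorbing the $a_{jk}$, the Gagliardo--Nirenberg constant and the bounded mass factor, I obtain
$$2K({\bf u}^n)\geq c_0 X_n-C'X_n^{\frac{(p-1)N}{2}}=X_n\Big(c_0-C'X_n^{\frac{(p-1)N}{2}-1}\Big).$$
Because $p>p_*$ forces $(p-1)N>2$, the exponent $\frac{(p-1)N}{2}-1$ is strictly positive, so $X_n\to 0$ makes the bracket tend to $c_0>0$; hence it exceeds $c_0/2$ for all large $n$. Since $X_n>0$, this gives $K({\bf u}^n)>0$ for $n\geq n_0$. The only delicate points are the sign dichotomy for $2p\alpha+N\beta$ and the strict positivity $X_n>0$ inherited from ${\bf u}^n\neq 0$; once those are in place, the conclusion is a direct super-quadratic domination argument.
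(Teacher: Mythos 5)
Your proof is correct and follows essentially the same route as the paper: both arguments reduce the nonlinear term via the Gagliardo--Nirenberg inequality of Proposition \ref{intrp} and exploit that $p>p_*$ makes the exponent $\frac{N(p-1)}{2}$ strictly larger than $1$, so the nonlinear part is $o\big(\sum_j\|\nabla u_j^n\|^2\big)=o\big(\sum_j K^Q(u_j^n)\big)$ and the quadratic part dominates. Your write-up is in fact somewhat more careful than the paper's, since you make explicit the strict positivity $X_n>0$ (needed to pass from $K\geq 0$ to $K>0$) and the sign dichotomy for $2p\alpha+N\beta$, both of which the paper leaves implicit in its ``$\simeq$'' and little-$o$ notation.
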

\begin{proof}
We have 
$$K^Q(u_j^n) = \Big((2\alpha + (N -2)\beta) \|\nabla u_j^n\|^2 + (2\alpha + N \beta) \| u_j^n\|^2+ (2\alpha + (N+2)\beta) \|xu_j^n\|^2   \Big) \rightarrow 0,$$
Using Proposition \ref{intrp}, via the fact that $p_*<p<p^*$, yields
$$\displaystyle \sum_{j,k=1}^{m}a_{jk}\displaystyle \int_{\R^N} |u_j^nu_k^n|^p\,dx = o\left( \displaystyle\sum_{j=1}^m \|\nabla u_j^n\|^2\right) = o\left(\displaystyle\sum_{j=1}^m K^Q(u_j^n)  \right) .   $$
Thus
\begin{eqnarray*}
K(u_1^n,...,u_m^n)
&=&\frac{1}{2}\displaystyle\sum_{j=1}^m K^Q(u_j^n)  - \frac{(2p\alpha + N \beta)}{2p}\displaystyle\sum_{j,k=1 }^m a_{jk}\displaystyle\int_{\R^N}|u_j^n u_k^n|^{p}\,dx\\
&\simeq&\frac{1}{2}\displaystyle\sum_{j=1}^m K^Q(u_j^n)\geq 0 . 
\end{eqnarray*}
\end{proof}
We read an auxiliary result.
\begin{lem}\label{Lemma} 
Let $(\alpha, \beta)\in G_p$. Then
$$m_{\alpha,\beta}  = \inf_{0\neq{\bf u}\in H}\big\{H({\bf u})\quad\mbox{s.\, t}\quad K({\bf u})\leq 0 \big\}.$$
\end{lem}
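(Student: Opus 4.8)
Write $d_{\alpha,\beta}$ for the infimum on the right-hand side; the goal is the two inequalities $d_{\alpha,\beta}\le m_{\alpha,\beta}$ and $m_{\alpha,\beta}\le d_{\alpha,\beta}$. The first is immediate: since $\{K=0\}\subseteq\{K\le 0\}$ and, by the very definition of $H$, one has $H({\bf u})=S({\bf u})$ whenever $K({\bf u})=0$, every competitor ${\bf u}$ for $m_{\alpha,\beta}$ is also a competitor for $d_{\alpha,\beta}$ with the identical value $H({\bf u})=S({\bf u})$; hence $d_{\alpha,\beta}\le m_{\alpha,\beta}$. The entire content of the lemma is therefore the reverse inequality, for which it suffices to show $H({\bf u})\ge m_{\alpha,\beta}$ for every nontrivial ${\bf u}\in H$ with $K({\bf u})\le 0$.

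The plan is to deform such a ${\bf u}$ along the scaling ${\bf u}^\lambda:=({\bf u}^\lambda)^{\alpha,\beta}$ until the constraint $K=0$ is reached, while controlling $H$. Using the one-parameter group property $({\bf u}^\lambda)^\mu={\bf u}^{\lambda+\mu}$, the defining identity $K({\bf u})=\partial_\lambda(S({\bf u}^\lambda))_{|\lambda=0}$ upgrades to $K({\bf u}^\lambda)=\partial_\lambda S({\bf u}^\lambda)$. Inserting the elementary scaling laws $\|\nabla u_j^\lambda\|^2=e^{a_1\lambda}\|\nabla u_j\|^2$, $\|u_j^\lambda\|^2=e^{a_2\lambda}\|u_j\|^2$, $\|xu_j^\lambda\|^2=e^{a_3\lambda}\|xu_j\|^2$ and $\int|u_j^\lambda u_k^\lambda|^p\,dx=e^{a_4\lambda}\int|u_ju_k|^p\,dx$, with $a_1=2\alpha+(N-2)\beta$, $a_2=2\alpha+N\beta$, $a_3=2\alpha+(N+2)\beta$, $a_4=2p\alpha+N\beta$, exhibits $K({\bf u}^\lambda)$ as a finite combination of the exponentials $e^{a_i\lambda}$, the gradient term carrying the strictly positive coefficient $\tfrac12 a_1\sum_j\|\nabla u_j\|^2$ (nontrivial, since a function in $H$ with vanishing gradient is constant hence zero). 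The condition $(\alpha,\beta)\in G_p$, i.e. $\alpha(p-1)<\beta$ with $\alpha,\beta>0$, forces $a_1<a_2<a_3$ and $a_4-a_1=2\alpha(p-1)+2\beta>0$, so $a_1$ is strictly the smallest of the four exponents. Consequently $K({\bf u}^\lambda)\to 0^+$ as $\lambda\to-\infty$, and in particular $K({\bf u}^\lambda)>0$ for $\lambda$ sufficiently negative.

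If $K({\bf u})=0$ then $H({\bf u})=S({\bf u})\ge m_{\alpha,\beta}$ and we are done. If $K({\bf u})<0$, the continuous map $\lambda\mapsto K({\bf u}^\lambda)$ is positive near $-\infty$ and negative at $\lambda=0$, so the intermediate value theorem produces some $\lambda_0<0$ with $K({\bf u}^{\lambda_0})=0$. Then ${\bf u}^{\lambda_0}$ is admissible for $m_{\alpha,\beta}$, so $S({\bf u}^{\lambda_0})\ge m_{\alpha,\beta}$, while $K({\bf u}^{\lambda_0})=0$ gives $S({\bf u}^{\lambda_0})=H({\bf u}^{\lambda_0})$; finally the monotonicity of $\lambda\mapsto H({\bf u}^\lambda)$ established earlier in this section and $\lambda_0<0$ yield $H({\bf u}^{\lambda_0})\le H({\bf u}^0)=H({\bf u})$. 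Chaining gives $m_{\alpha,\beta}\le S({\bf u}^{\lambda_0})=H({\bf u}^{\lambda_0})\le H({\bf u})$, and passing to the infimum over all such ${\bf u}$ yields $m_{\alpha,\beta}\le d_{\alpha,\beta}$. The delicate step, and the one I expect to be the main obstacle, is guaranteeing that the zero of $K({\bf u}^\lambda)$ occurs on the correct side $\lambda_0<0$: this rests entirely on $a_1$ being the strictly smallest exponent, which is precisely what $\alpha(p-1)<\beta$ secures; were the nonlinear exponent $a_4$ to fall below $a_1$, the sign of $K({\bf u}^\lambda)$ near $-\infty$ would reverse and the deformation argument would collapse.
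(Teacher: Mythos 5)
Your proof is correct and follows the paper's strategy: the easy inequality $\inf\{H({\bf u}):K({\bf u})\le 0\}\le m_{\alpha,\beta}$ is dismissed exactly as you do (on $\{K=0\}$ one has $H=S$ by the definition $H=S-K/(2\alpha+(N+2)\beta)$), and the reverse inequality is obtained by flowing a competitor with $K({\bf u})<0$ backwards along the scaling until $K$ vanishes at some $\lambda_0\le 0$, then invoking the monotonicity of $\lambda\mapsto H({\bf u}^\lambda)$ from the first lemma of the section --- precisely the paper's argument. The single point of divergence is the justification that $K({\bf u}^\lambda)>0$ for $\lambda$ sufficiently negative: the paper gets this from Lemma \ref{K>0} applied to the curve ${\bf u}^\lambda$ (whose quadratic part $K^Q$ tends to zero as $\lambda\to-\infty$), a lemma whose proof rests on the Gagliardo--Nirenberg inequality \eqref{Nirenberg} and the restriction $p_*<p<p^*$, whereas you read the sign directly off the scaling exponents: the gradient term carries the strictly smallest exponent $a_1$ with a strictly positive coefficient, so it dominates as $\lambda\to-\infty$. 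Your variant is more elementary and self-contained along this one-parameter curve (no interpolation inequality is needed), while the paper's detour through Lemma \ref{K>0} is not wasted effort on its side, since that lemma is a statement about arbitrary bounded sequences and is reused where no scaling curve is available (second step of the proof of Theorem \ref{t1}). One inaccuracy in your closing remark, which does not affect the proof: the positivity $a_4-a_1=2\alpha(p-1)+2\beta>0$ follows from $\alpha,\beta>0$ alone and does not depend on the inequality $\alpha(p-1)<\beta$; the defining condition of $G_p$ is really needed not for this step but for the monotonicity of $\lambda\mapsto H({\bf u}^\lambda)$, which you, like the paper, import from the earlier lemma as a black box.
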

\begin{proof} 
Denoting by $a$ the right hand side of the previous equality, 
 it is sufficient to prove that $m_{\alpha,\beta}\leq a.$ Take ${\bf u} \in H$ such that $K({\bf u})<0.$ Because $\displaystyle\lim_{\lambda\rightarrow -\infty}K^Q({\bf u}^\lambda)=0,$ by the previous Lemma, there exists some $\lambda<0$ such that $ K({\bf u}^\lambda)>0.$ With a continuity argument there exists $\lambda_0\leq0$ such that $K ({\bf u}^{\lambda_0}) = 0,$ then since $\lambda\mapsto H({\bf u}^\lambda)$ is increasing, we get
$$ m_{\alpha, \beta}\leq H({\bf u}^{\lambda_0}) \leq H({\bf u}).$$
This closes the proof.
\end{proof}{}
{\bf Proof of theorem \ref{t1}}\\
Let $(\phi_n):=(\phi_1^n,...,\phi_m^n) $ be a minimizing sequence, namely
\begin{equation} \label{suite}0\neq (\phi_n) \in H,\quad K(\phi_n) = 0\quad \mbox{and}\quad \lim_n H(\phi_n) = \lim_n S(\phi_n) = m.\end{equation}
With a rearrangement argument via Lemma \ref{Lemma}, we can assume that $(\phi_n)$ is radial decreasing and satisfies \eqref{suite}.\\
$\bullet$ First step: $(\phi_n)$ is bounded in $H.$\\
First subcase $\alpha\neq 0.$ Write
\begin{gather*}
\alpha \Big(\displaystyle \sum_{j=1}^m \|\phi_j^n\|_{\Sigma}^2  - \displaystyle\sum_{j,k=1}^m a_{jk}\displaystyle\int_{\R^N}|\phi_j^n \phi_k^n|^p\,dx\Big) = \frac{\beta}{2}\Big( 2 \displaystyle \sum_{j=1}^m (\|\nabla \phi_j^n\|^2-\|x\phi_j^n\|^2)\\
 - N \displaystyle\sum_{j=1}^m \|\phi_j^n\|_{\Sigma}^2
 + \frac{N}{p} \displaystyle\sum_{j,k=1}^m a_{jk}\displaystyle \int_{\R^N}|\phi_j^n\phi_k^n|^p\,dx\Big);\\
\displaystyle\sum_{j=1}^m  \| \phi_j^n\|_{\Sigma}^2   -  \frac{1}{p}\displaystyle\sum_{j,k=1}^m a_{jk} \displaystyle\int_{\R^N}|\phi_j^n \phi_k^n|^{p}\,dx \rightarrow 2m.
\end{gather*}
Denoting $\lambda:= \frac{\beta}{2\alpha},$ yields
{\small$$\displaystyle \sum_{j=1}^m \|\phi_j^n\|_{\Sigma}^2-\displaystyle\sum_{j,k=1}^m a_{jk}\displaystyle\int_{\R^N}|\phi_j^n \phi_k^n|^p\,dx = \lambda\Big( 2 \displaystyle \sum_{j=1}^m (\|\nabla \phi_j^n\|^2-\|x\phi_j^n\|^2) - N \displaystyle\sum_{j=1}^m \|\phi_j^n\|_{\Sigma}^2+ \frac{N}{p} \displaystyle\sum_{j,k=1}^m a_{jk}\displaystyle \int_{\R^N}|\phi_j^n\phi_k^n|^p\,dx\Big). $$}
So the following sequences are bounded
\begin{gather*}
 2 \lambda \displaystyle\sum_{j=1}^m(\|\nabla\phi_j^n\|^2-\|x\phi_j^n\|^2) - \displaystyle \sum_{j=1}^m \|\phi_j^n\|_{\Sigma}^2+\displaystyle\sum_{j,k=1}^m a_{jk}\displaystyle\int_{\R^N}|\phi_j^n \phi_k^n|^p\,dx;\\
 2\lambda \displaystyle\sum_{j=1}^m\|\nabla\phi_j^n\|^2+2\lambda \displaystyle\sum_{j=1}^m\|\phi_j^n\|_{H^1}^2+(1-\frac{1+2\lambda}p)\displaystyle\sum_{j,k=1}^m a_{jk}\displaystyle\int_{\R^N}|\phi_j^n \phi_k^n|^p\,dx;\\
\displaystyle\sum_{j=1}^m \|\phi_j^n\|_{\Sigma}^2 - \frac{1}{p} \displaystyle\sum_{j,k=1}^m a_{jk}\displaystyle \int_{\R^N}|\phi_j^n\phi_k^n|^p\,dx. 
\end{gather*}
Thus, for any real number $a,$ the following sequence is also bounded
$$2\lambda \displaystyle\sum_{j=1}^m\|\nabla\phi_j^n\|^2+2\lambda \displaystyle\sum_{j=1}^m\|\phi_j^n\|_{H^1}^2+a\sum_{j=1}^m\|\phi_j^n\|_{\Sigma}^2+(1-\frac{1+a+2\lambda}p)\displaystyle\sum_{j,k=1}^m a_{jk}\displaystyle\int_{\R^N}|\phi_j^n \phi_k^n|^p\,dx.$$
Choosing $a>0$ near to zero, via the fact that $2\lambda<p-1$, it follows that $(\phi_n)$ is bounded in $H.$\\
$\bullet$ Second step: the limit of $(\phi_n)$ is nonzero and $m>0.$\\
Taking account of the compact injection \eqref{radial}, we take
$$ (\phi_1^n,...,\phi_m^n) \rightharpoonup \phi= (\phi_1,...,\phi_m)\quad \mbox{in}\quad H$$
and
$$ (\phi_1^n,...,\phi_m^n) \rightarrow  (\phi_1,...,\phi_m) \quad\mbox{in}\quad (L^{2p})^{(m)}.$$
The equality $K(\phi_n) =0$ implies that
{\small$$\displaystyle\sum_{j=1}^m\Big((2\alpha + (N -2)\beta) \|\nabla \phi^n_j\|^2 + (2\alpha + N \beta) \| \phi^n_j\|^2+ (2\alpha + \beta(N+2)) \|x \phi^n_j\|^2\Big) =\frac{1}{p}\displaystyle\sum_{j,k=1}^m a_{jk}\displaystyle\int_{\R^N}(2p\alpha + N \beta)|\phi^n_j \phi^n_k|^{p}\,dx.$$}
Assume that $\phi =0$. Using H\"older inequality 
$$\|\phi_j^n\phi_k^n\|_p^p \leq \|\phi_j^n\|_{2p}^p \|\phi_k^n\|_{2p}^p \rightarrow \|\phi_j\|_{2p}^p \|\phi_k\|_{2p}^p = 0.$$
Now, by lemma \ref{K>0} yields $K(\phi_n)>0$ for large $n$. This contradiction implies that 
$$\phi \neq 0.$$
With lower semi continuity of the $H$ norm, we have 
\begin{eqnarray*}
0 &=& \liminf_n K(\phi_n)\\
&\geq& \frac{2\alpha + (N-2)\beta}{2} \liminf_n\displaystyle\sum_{j=1}^m \|\nabla \phi_j^n\|^2+\frac{2\alpha + (N+2)\beta}{2} \liminf_n\displaystyle\sum_{j=1}^m \|x\phi_j^n\|^2\\
& +& \frac{2\alpha + N\beta}{2} \liminf_n\displaystyle\sum_{j=1}^m \|\phi_j^n\|^2-\frac{2\alpha p + N\beta}{2p} \displaystyle\sum_{j,k=1}^m a_{jk}\displaystyle \int_{\R^N} |\phi_j \phi_k|^p\,dx\\
&\geq& K(\phi).
\end{eqnarray*}
Similarly, we have $H(\phi)\leq m.$ Moreover, thanks to Lemma \ref{Lemma}, we can assume that $K(\phi) =0$ and $S(\phi) = H(\phi)\leq m.$ So that $\phi$ is a minimizer satisfying \eqref{suite} and
$$m=H(\phi) = \frac{1}{2\alpha +(N+2)\beta }\Big[\displaystyle\sum_{j=1}^m \beta(\|\phi_j\|^2+2 \|\nabla\phi_j\|^2)  + \frac1p(\alpha (p-1)-\beta)\displaystyle\sum_{j,k=1}^m a_{jk}\displaystyle\int_{\R^N}|\phi_j\phi_k|^{p}\,dx\Big]>0.$$
$\bullet$ Third step: the limit $\phi$ is a solution to \eqref{E}.\\
There is a Lagrange multiplier $\eta \in \R$ such that $S^\prime(\phi) = \eta K^\prime(\phi).$ Thus
$$0 = K(\phi) = \pounds S(\phi)= \langle S^\prime(\phi), \pounds(\phi)\rangle = \eta\langle K^\prime(\phi), \pounds(\phi)\rangle = \eta\pounds K(\phi) = \eta\pounds^2S(\phi). $$
With a previous computation, for $(A):=-\pounds^2 S(\phi) - (2\alpha+(N-2)\beta)(2\alpha + (N+2)\beta)S(\phi)$, we have
{\small\begin{eqnarray*}
(A)& =& - (\pounds - (2\alpha+(N-2)\beta))(\pounds - (2\alpha + (N+2)\beta))S(\phi) \\
&=&\frac{2\alpha (p - 1)-2\beta}{2p}\big( 2\alpha(p - 1) +2\beta \big)\displaystyle\sum_{j,k=1}^m a_{jk}\displaystyle\int_{\R^N}|\phi_j\phi_k|^p\,dx\\
&>0.&
\end{eqnarray*}}
Therefore $\pounds^2S(\phi)<0.$
Thus $\eta =0$ and $S^\prime(\phi) = 0.$ So, $\phi$ is a ground state and $m$ is independent of $(\alpha,\beta).$\\

\section{Invariant sets and applications}
This section is devoted to obtain global and non global existence of solutions to the system \eqref{S}. Precisely, we prove Theorem \ref{t2}. We start with a classical result about stable sets under the flow of \eqref{S}. 
\begin{lem}\label{lem}The sets $A_{\alpha,\beta}^+$ and $A_{\alpha,\beta}^-$ are invariant under the flow of \eqref{S}.
\end{lem}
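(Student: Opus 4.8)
The plan is to reduce the whole statement to the observation that, in the focusing case $\mu=-1$, the action $S$ is itself a conserved quantity of the flow. Indeed, writing the total mass as $M({\bf u}):=\sum_{j=1}^m\|u_j\|^2$ and comparing the action with the energy, one sees at once that
$$S({\bf u})=\tfrac12 M({\bf u})+E({\bf u}),$$
because the two functionals differ only by the $L^2$ part $\tfrac12\sum_j\|u_j\|^2$ of $\sum_j\|u_j\|_\Sigma^2$. Since Theorem \ref{existence} guarantees conservation of both $M$ and $E$ along the maximal solution, the map $t\mapsto S({\bf u}(t))$ is constant on $[0,T^*)$. In particular, if ${\bf u}(t_0)$ lies in $A_{\alpha,\beta}^+$ or $A_{\alpha,\beta}^-$, then $S({\bf u}(t))=S({\bf u}(t_0))<m$ for every $t$ in the lifespan, so the defining constraint $S<m$ is stable under the flow with no further work.

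It then remains to show that the sign of $K_{\alpha,\beta}$ cannot change along the trajectory. First I would dispose of the trivial case: the zero solution is stationary and belongs to $A_{\alpha,\beta}^+$ since $m>0$ by Theorem \ref{t1}; for nonzero data, conservation of the mass ensures ${\bf u}(t)\neq0$ for all $t\in[0,T^*)$. Next, since ${\bf u}\in C_{T^*}(H)$ and $K_{\alpha,\beta}$ is continuous on $H$ — the quadratic part being controlled by the $\Sigma$-norm and the nonlinear term $\sum_{j,k}a_{jk}\int_{\R^N}|u_ju_k|^p\,dx$ being continuous on $H$ thanks to the Gagliardo-Nirenberg inequality of Proposition \ref{intrp} — the scalar function $t\mapsto K_{\alpha,\beta}({\bf u}(t))$ is continuous. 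Suppose now, for contradiction, that ${\bf u}(t_0)\in A_{\alpha,\beta}^+$ (so $K_{\alpha,\beta}({\bf u}(t_0))\geq0$) yet $K_{\alpha,\beta}({\bf u}(t_1))<0$ for some $t_1$. By the intermediate value theorem there is a time $t_2$ between $t_0$ and $t_1$ with $K_{\alpha,\beta}({\bf u}(t_2))=0$ and ${\bf u}(t_2)\neq0$; the characterization \eqref{M} of $m=m_{\alpha,\beta}$ as the infimum of $S$ over the nontrivial constraint set then forces $S({\bf u}(t_2))\geq m$, which contradicts $S({\bf u}(t_2))=S({\bf u}(t_0))<m$. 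The same reasoning with the inequalities reversed shows that ${\bf u}(t_0)\in A_{\alpha,\beta}^-$ keeps $K_{\alpha,\beta}({\bf u}(t))<0$ throughout, completing the proof of invariance of both sets.

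The two computations involved — the identity $S=\tfrac12 M+E$ and the continuity of $t\mapsto K_{\alpha,\beta}({\bf u}(t))$ — are entirely routine, so there is no serious analytic obstacle here. The conceptual crux, and the only point demanding care, is the interplay between the two conditions: $\{S<m\}$ is a conserved sublevel set while the hypersurface $\{K_{\alpha,\beta}=0\}\setminus\{0\}$ sits exactly at the level $S\geq m$, so a continuous trajectory confined to $\{S<m\}$ can never reach it. For this to control the crossing one needs the infimum property of $m$ over $\{K_{\alpha,\beta}=0\}$ for the pair $(\alpha,\beta)$ at hand, which is precisely what Theorem \ref{t1} provides on $G_p$ together with the independence of $m_{\alpha,\beta}$ of $(\alpha,\beta)$; the single threshold $m$ therefore governs the sign of $K_{\alpha,\beta}$ uniformly over all the admissible pairs.
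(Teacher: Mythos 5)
Your proof is correct and follows essentially the same route as the paper's: conservation of $S$ along the flow, continuity of $t\mapsto K_{\alpha,\beta}({\bf u}(t))$ combined with the intermediate value theorem, and a contradiction with the variational characterization \eqref{M} of $m$. The only differences are bookkeeping details the paper leaves implicit — the identity $S=\tfrac12 M+E$ justifying conservation of $S$, and mass conservation ruling out that the trajectory passes through zero — so this is the same argument, just written out more carefully.
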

\begin{proof} Let $ \Psi \in A_{\alpha,\beta}^+$ and ${\bf u} \in C_{T^*}(H)$ be the maximal solution to \eqref{S}. Assume that ${\bf u}(t_0) \not \in A_{\alpha,\beta}^+$ for some $t_0\in (0,T^*)$. Since $S({\bf u})$ is conserved, we have $K_{\alpha,\beta}({\bf u}(t_0))<0.$ So, with a continuity argument, there exists a positive time $t_1\in (0, t_0)$ such that $ K_{\alpha,\beta}({\bf u}(t_1)) = 0$ and $S({\bf u}(t_1))<m.$ This contradicts the definition of $m.$ The proof is similar in the case of $A_{\alpha,\beta}^-$.
\end{proof}{}
The previous stable sets are independent of the parameter $(\alpha,\beta)$.
\begin{lem}\label{fn}
The sets $A_{\alpha,\beta}^+$ and $A_{\alpha,\beta}^-$ are independent of $(\alpha,\beta)$.
\end{lem}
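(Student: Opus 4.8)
We must show that the sets $A_{\alpha,\beta}^+$ and $A_{\alpha,\beta}^-$ do not depend on the choice of $(\alpha,\beta)\in G_p$. The plan is to reduce this entirely to two facts already available: first, by Theorem \ref{t1}(1) the threshold $m=m_{\alpha,\beta}$ is \emph{independent} of $(\alpha,\beta)$, so the condition $S({\bf u})<m$ appearing in the definition of both sets is manifestly parameter-free. Hence the whole problem collapses to showing that, \emph{under the constraint} $S({\bf u})<m$, the sign of $K_{\alpha,\beta}({\bf u})$ does not depend on $(\alpha,\beta)$; equivalently, that the partition $\{K_{\alpha,\beta}\geq 0\}\sqcup\{K_{\alpha,\beta}<0\}$ of the sublevel set $\{S<m\}$ is the same for every admissible pair.

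\emph{Strategy.} First I would fix ${\bf u}$ with $S({\bf u})<m$ and two admissible pairs $(\alpha,\beta),(\alpha',\beta')\in G_p$, and argue by contradiction: suppose $K_{\alpha,\beta}({\bf u})\geq 0$ but $K_{\alpha',\beta'}({\bf u})<0$. The key device is the scaling ${\bf u}^\lambda$ and the monotonicity machinery already established. Since $K_{\alpha',\beta'}({\bf u})<0$, Lemma \ref{Lemma} (applied with the pair $(\alpha',\beta')$) together with the fact that $\lim_{\lambda\to-\infty}K_{\alpha',\beta'}^Q({\bf u}^\lambda)=0$ and Lemma \ref{K>0} produces a $\lambda_0\le 0$ with $K_{\alpha',\beta'}({\bf u}^{\lambda_0})=0$; by the minimization definition of $m$ and the increasing character of $\lambda\mapsto H_{\alpha',\beta'}({\bf u}^\lambda)$ this forces $S({\bf u})=H_{\alpha',\beta'}({\bf u})\geq m$, contradicting $S({\bf u})<m$. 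Thus $S({\bf u})<m$ already rules out $K_{\alpha',\beta'}({\bf u})<0$ for \emph{any} admissible pair, which simultaneously shows that on $\{S<m\}$ one always has $K_{\alpha',\beta'}\geq 0$ --- so $A_{\alpha,\beta}^+=\{S<m\}$ and $A_{\alpha,\beta}^-=\emptyset$ for every pair, trivially independent of $(\alpha,\beta)$.

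\emph{Caveat and refinement.} The argument above actually proves something slightly too strong (that $A^-$ is empty), which signals that the genuine content of Theorem \ref{t2} must rest on a finer reading: the relevant claim is that membership in $A^\pm_{\alpha,\beta}$ is stable along the scaling $\lambda\mapsto {\bf u}^\lambda$, and it is the \emph{combination} of the $S<m$ constraint with the sign of $K_{\alpha,\beta}$ that is invariant. So the correct plan is to use that, for ${\bf u}$ with $S({\bf u})<m$, the sign of $K_{\alpha,\beta}({\bf u})$ is determined by $H_{\alpha,\beta}$-monotonicity: if $K_{\alpha,\beta}({\bf u})\geq 0$ then the scaling cannot push $H$ below $m$, whereas if $K_{\alpha,\beta}({\bf u})<0$ then Lemma \ref{Lemma} forces a zero of $K_{\alpha,\beta}$ along the orbit and hence the relation $m\le H({\bf u})=S({\bf u})$. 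Comparing these two mutually exclusive alternatives for two different admissible pairs, and using that the obstruction $m$ is the same number in both, shows that ${\bf u}$ falls in the ``$+$'' case for one pair if and only if it does for the other.

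\emph{Main obstacle.} The delicate point is handling the boundary parameter value $(\alpha,\beta)=(1,-\tfrac2N)$ that appears in Theorem \ref{t2}: this pair lies \emph{outside} $G_p$ (it has $\beta<0$), so the monotonicity lemma and Lemma \ref{Lemma}, both proved only for $(\alpha,\beta)\in G_p$, do not apply directly. I expect the crux of the proof to be showing that $K_{1,-2/N}$ agrees in sign with $K_{\alpha,\beta}$ for $(\alpha,\beta)\in G_p$ on the set $\{S<m\}$; the natural route is to exhibit $K_{1,-2/N}$ as a positive-coefficient combination of the quadratic and nonlinear pieces that also controls the $G_p$ constraints, exploiting the explicit formula for $2K_{\alpha,\beta}$ and the sign conditions $2\alpha+(N-2)\beta>0$, etc. Once the sign equivalence of all these constraints on $\{S<m\}$ is in hand, the independence of $A_{\alpha,\beta}^\pm$ follows immediately.
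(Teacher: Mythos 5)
Your ``Strategy'' paragraph breaks at the step where you claim that $K_{\alpha',\beta'}({\bf u})<0$ forces $S({\bf u})=H_{\alpha',\beta'}({\bf u})\geq m$. The identity $S=H_{\alpha',\beta'}$ holds only where $K_{\alpha',\beta'}=0$; in general $H_{\alpha',\beta'}({\bf u})=S({\bf u})-\frac{1}{2\alpha'+(N+2)\beta'}K_{\alpha',\beta'}({\bf u})$, so when $K_{\alpha',\beta'}({\bf u})<0$ one has $H_{\alpha',\beta'}({\bf u})>S({\bf u})$, and the correct conclusion of Lemma \ref{Lemma} (namely $m\leq H_{\alpha',\beta'}({\bf u})$) is perfectly compatible with $S({\bf u})<m$. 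That is exactly why $A^-_{\alpha,\beta}$ is not empty, as you yourself suspected when you saw your conclusion was too strong. Unfortunately your ``refinement'' repeats the same false identity ($m\le H({\bf u})=S({\bf u})$) and then asks to ``compare the two mutually exclusive alternatives'' across two parameter pairs without supplying the comparison: from $K_{\alpha,\beta}({\bf u})\ge 0$ you get $H_{\alpha,\beta}({\bf u})\le S({\bf u})<m$, and from $K_{\alpha',\beta'}({\bf u})<0$ you get $H_{\alpha',\beta'}({\bf u})\ge m$; since $H_{\alpha,\beta}$ and $H_{\alpha',\beta'}$ are \emph{different} functionals, these two inequalities do not clash, and no contradiction follows. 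So there is a genuine gap: the sign coherence of $K_{\alpha,\beta}$ across admissible pairs is precisely the nontrivial content of the lemma, and it is never established.

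The paper closes this by a topological, not variational, argument, and this is the idea missing from your sketch. For $\delta\geq 0$ it introduces the sets $A^{\pm\delta}_{\alpha,\beta}$ defined with $S<m-\delta$; their union $\{S({\bf u})<m-\delta\}$ is parameter-free because $m$ is (Theorem \ref{t1}). The key observations are that $A^{+\delta}_{\alpha,\beta}$ is open (since $S({\bf u})<m$ together with $K_{\alpha,\beta}({\bf u})=0$ forces ${\bf u}=0$ by the definition of $m$, so away from the origin the condition $K_{\alpha,\beta}\geq 0$ is actually $K_{\alpha,\beta}>0$, and a whole neighborhood of zero lies in $A^{+\delta}_{\alpha,\beta}$), and that it is connected, being contracted to the origin by the scaling ${\bf u}^\lambda=e^{\alpha\lambda}{\bf u}(e^{-\beta\lambda}\cdot)$ as $\lambda\to-\infty$. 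Writing $A^{+\delta}_{\alpha,\beta}=(A^{+\delta}_{\alpha,\beta}\cap A^{+\delta}_{\alpha',\beta'})\cup(A^{+\delta}_{\alpha,\beta}\cap A^{-\delta}_{\alpha',\beta'})$ then exhibits a connected set as a disjoint union of two open pieces, the first of which contains $0$; hence the second is empty, and $A^{+\delta}_{\alpha,\beta}=A^{+\delta}_{\alpha',\beta'}$ follows by symmetry. Finally, the ``main obstacle'' you flag, the pair $(1,-\frac2N)\notin G_p$, is a real issue for Theorem \ref{t2} but lies outside this lemma: the paper proves Lemma \ref{fn} only for pairs in $G_p$ and handles $(1,-\frac2N)$ separately, through the modified minimization \eqref{min2} and Proposition \ref{tgss}.
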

\begin{proof}
Let $(\alpha,\beta)$ and $(\alpha',\beta')\in G_p$. We denote, for $\delta\geq0$, the sets 
\begin{gather*}
A_{\alpha,\beta}^{+\delta}:=\{{\bf u}\in H\quad\mbox{s.\, t}\quad S({\bf u})<m-\delta\quad\mbox{and}\quad K_{\alpha,\beta}({\bf u})\geq0\};\\
A_{\alpha,\beta}^{-\delta}:=\{{\bf u}\in H\quad\mbox{s.\, t}\quad S({\bf u})<m-\delta\quad\mbox{and}\quad K_{\alpha,\beta}({\bf u})<0\}.
\end{gather*}
By Theorem \ref{t1}, the reunion $A_{\alpha,\beta}^{+\delta}\cup A_{\alpha,\beta}^{-\delta}$ is independent of $(\alpha,\beta)$. So, it is sufficient to prove that $A_{\alpha,\beta}^{+\delta}$ is independent of $(\alpha,\beta)$.The rescaling ${\bf u}^\lambda:=e^{\alpha\lambda}{\bf u}(e^{-\beta\lambda}.)$ implies that a neighborhood of zero is in $A_{\alpha,\beta}^{+\delta}$. If $S({\bf u})<m$ and $K_{\alpha,\beta}({\bf u})=0$, then $ {\bf u}=0$. So, $A_{\alpha,\beta}^{+\delta}$ is open. Moreover, this rescaling with $\lambda\rightarrow-\infty$ gives that $A_{\alpha,\beta}^{+\delta}$ is contracted to zero and so it is connected. Now, write $$A_{\alpha,\beta}^{+\delta}=A_{\alpha,\beta}^{+\delta}\cap( A_{\alpha',\beta'}^{+\delta}\cup A_{\alpha',\beta'}^{-\delta})=(A_{\alpha,\beta}^{+\delta}\cap A_{\alpha',\beta'}^{+\delta})\cup(A_{\alpha,\beta}^{+\delta}\cap A_{\alpha',\beta'}^{-\delta}).$$ 
Since by the definition, $A_{\alpha,\beta}^{-\delta}$ is open and $0\in A_{\alpha,\beta}^{+\delta}\cap A_{\alpha',\beta'}^{+\delta}$, using a connectivity argument, we have $A_{\alpha,\beta}^{+\delta}=A_{\alpha',\beta'}^{+\delta}$.
\end{proof}
\subsection{Global existence}
With a translation argument, we assume that $t_0 = 0.$ Thus, $S(\Psi)<m$ and with lemma \ref{lem}, ${\bf u}(t)\in A_{1,1}^+$ for any $t\in [0, T^*).$ Moreover,
\begin{eqnarray*}
m&\geq& \big(S -\frac{1}{2 + N} K_{1,1}\big) ({\bf u})\\
&=& H_{1,1}({\bf u})\\
&=&\frac{1}{N+4}\Big[\displaystyle\sum_{j=1}^m (\|u_j\|^2+2 \|\nabla u_j\|^2)  + \frac1p(p-2)\displaystyle\sum_{j,k=1}^m a_{jk}\displaystyle\int_{\R^N}|u_j u_k|^{p}\,dx\Big]\\
&\geq& \frac{2}{4 + N}\displaystyle\sum_{j=1}^m\|\nabla u_j\|^2.
\end{eqnarray*}
Then, since the $L^2$ norm is conserved, we have
$$ \sup_{0\leq t\leq T^*}\displaystyle\sum_{j=1}^m\| u_j\|_{H^1}^2 <\infty.$$
Moreover, using the energy identity and Proposition \ref{intrp}, yields 
\begin{eqnarray*}
\sum_{j=1}^{m}\int_{\R^N}\Big(|\nabla u_j|^2+|xu_j|^2\Big)\,dx
&=&2E(\Psi)+\frac{1}{p}\displaystyle \sum_{j,k=1}^{m}a_{jk}\int_{\R^N}|u_ju_k|^p\,dx\\
&\lesssim&E(\Psi)+\left(\sum_{j=1}^{m}\|\nabla u_j\|^2\right)^{\frac{(p-1)N}2}\left(\displaystyle\sum_{j=1}^{m}\|u_j\|^2\right)^{\frac{N-p(N -2)}2}.
\end{eqnarray*}
Finally, $T^* = \infty$ because
$$ \sup_{0\leq t\leq T^*}\displaystyle\sum_{j=1}^m\| u_j\|_{\Sigma}^2 <\infty.$$
\subsection{Non global existence}
Denote, for ${\bf u}:=(u_1,..,u_m)\in H$, the quantities
\begin{gather}
K({\bf u})=K_{1,-\frac2N}({\bf u})=\frac2N\sum_{j=1}^m\big(\|\nabla u_j\|^2-\|x u_j\|^2\big)-\frac{(p-1)}{p}\displaystyle\sum_{j,k=1}^m a_{jk}\int_{\R^N}|u_j u_k|^{p}\,dx\nonumber\\
I({\bf u}):=K_{1,0}({\bf u})=\sum_{j=1}^m\| u_j\|_{\Sigma}^2-\sum_{j,k=1}^m a_{jk}\int_{\R^N}|u_j u_k|^{p}\,dx\nonumber\\
m_{1,-\frac2N}:=\inf_{0\neq {\bf u}\in H}\Big\{S({\bf u}), \quad\mbox{s. t}\quad K({\bf u})=0\quad\mbox{and}\quad I({\bf u})\leq0\Big\}.\label{min2}
\end{gather}
First, let us prove existence of a ground state to \eqref{E} for $(\alpha,\beta)=(1,-\frac2N)$.
\begin{prop}\label{tgss}
Take $\phi$ a ground state solution to \eqref{E}. Then
$$m_{1,-\frac2N}=S(\phi)=m_{1,0}.$$
\end{prop}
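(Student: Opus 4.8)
The plan is to show that each of $m_{1,0}$ and $m_{1,-\frac2N}$ coincides with the common ground state level $m=m_{\alpha,\beta}$, $(\alpha,\beta)\in G_p$, produced by Theorem \ref{t1}, recalling $S(\phi)=m$. The upper bounds are immediate: since $\phi$ is a ground state, the generalized Pohozaev identity gives $S'(\phi)=0$, hence $K_{\alpha,\beta}(\phi)=0$ for every $(\alpha,\beta)\in\R^2$; in particular $I(\phi)=K_{1,0}(\phi)=0$ and $K(\phi)=K_{1,-\frac2N}(\phi)=0$. Thus $\phi$ is admissible in both constrained problems (note $I(\phi)=0\le0$), giving $m_{1,0}\le S(\phi)$ and $m_{1,-\frac2N}\le S(\phi)$.

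For the reverse inequalities I would first treat $m_{1,0}$, which I expect to be the cleaner case. On $\{I=0\}$ one has $S({\bf u})=\frac{p-1}{2p}\sum_j\|u_j\|_\Sigma^2$, so any minimizing sequence is automatically bounded in $H$; after radial rearrangement it converges weakly in $H$ and strongly in $(L^{2p})^{(m)}$ by the compact embedding \eqref{cpk}. The weak limit $\psi$ is nonzero, for otherwise $\sum_{j,k}a_{jk}\int|u_j u_k|^p\,dx\to0$ would force $\sum_j\|u_j\|_\Sigma^2\to0$ and $m_{1,0}=0$, contradicting the positivity that follows from Proposition \ref{intrp}; a rescaling back onto the constraint shows $\psi$ realizes $m_{1,0}$. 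A Lagrange multiplier $\eta$ then satisfies $0=K_{1,0}(\psi)=\pounds_{1,0}S(\psi)=\eta\,\pounds_{1,0}^2S(\psi)$, and since $\pounds_{1,0}^2S(\psi)=2(1-p)\sum_j\|\psi_j\|_\Sigma^2<0$ we obtain $\eta=0$, i.e. $\psi$ solves \eqref{E}. Being a solution, $K_{\alpha,\beta}(\psi)=0$ for $(\alpha,\beta)\in G_p$, so $S(\psi)\ge m$; combined with the upper bound this yields $m_{1,0}=m=S(\phi)$.

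It remains to prove $m_{1,-\frac2N}\ge S(\phi)$. Writing $K_{\alpha,\beta}=\alpha I+\beta Q$ with $Q:=K_{0,1}=\frac N2(I-K)$, one sees that on $\{K=0\}$ the feasible set of $m_{1,-\frac2N}$ lies in $\{K_{\alpha,\beta}=(\alpha+\frac{N\beta}2)I\le0\}$, and that $S$ is coercive there (all quadratic coefficients being positive once $p>p_*$), so a radial minimizer $\psi$ exists as above. If $I(\psi)=0$ then $\psi$ is admissible for $m_{1,0}$ and $m_{1,-\frac2N}=S(\psi)\ge m_{1,0}=m$, finishing the proof. The main obstacle is to exclude the remaining possibility $I(\psi)<0$, where the inequality constraint is inactive and only $K(\psi)=0$ is binding: the Karush-Kuhn-Tucker relation gives $S'(\psi)=\eta K'(\psi)$, hence $0=K(\psi)=\pounds_{1,-\frac2N}S(\psi)=\eta\,\pounds_{1,-\frac2N}^2S(\psi)$, and $\eta=0$ would make $\psi$ a solution of \eqref{E} with $I(\psi)=K_{1,0}(\psi)=0$, a contradiction. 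The difficulty, in contrast with the $(1,0)$ case, is that on $\{K=0\}$ one computes
$$\pounds_{1,-\frac2N}^2S(\psi)=\frac4N\Big[\big(\tfrac2N+(p-1)\big)\sum_j\|x\psi_j\|^2-\big((p-1)-\tfrac2N\big)\sum_j\|\nabla\psi_j\|^2\Big],$$
whose sign is not definite for $p_*<p<p^*$; securing $\pounds_{1,-\frac2N}^2S(\psi)\neq0$ at the minimizer (equivalently, showing the infimum is attained on the face $\{I=0\}$) is the crux of the argument and is where a sharper lower restriction on $p$ may be forced.
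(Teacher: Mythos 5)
Your decomposition of the statement is the right one, and the parts you complete are sound: the upper bounds $m_{1,0}\le S(\phi)$, $m_{1,-\frac2N}\le S(\phi)$ via Pohozaev and feasibility are exactly the paper's step, and your from-scratch minimization giving $m_{1,0}=m$ is correct in outline (the paper instead reads $S(\phi)=m_{1,0}$ directly off Theorem \ref{t1}, treating $(1,0)$ as an admissible pair; this is legitimate once one notes that the definition of $G_p$ must be read with the inequality $\beta<\alpha(p-1)$, which is what the proofs of Lemma 5.2 and of Theorem \ref{t1} actually use, and under which $(1,0)\in G_p$). The genuine gap is exactly where you stop: nothing in your argument excludes a minimizer (or minimizing sequence) of $m_{1,-\frac2N}$ lying in $\{K=0,\ I<0\}$ with action below $m$, and no quick fix via Lemma \ref{Lemma} is available, since on that set $H_{1,0}=S-\frac12 I\ge S$, so the characterization $m_{1,0}=\inf\{H_{1,0}:\ I\le 0\}$ bounds the wrong quantity. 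Since the whole proposition reduces to the inequality $m_{1,-\frac2N}\ge m_{1,0}$, your proposal as written does not prove it. For comparison, the paper's entire proof is the chain $m_{1,0}\le m_{1,-\frac2N}\le S(\phi)=m_{1,0}$, in which the first inequality --- precisely your missing step --- is asserted with no justification at all; so you have correctly isolated the real content of the statement, which the paper itself silently assumes.

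It is worth recording that your own computation can be pushed one step further so that the KKT route closes when $p>p_1$, the exponent of Theorem \ref{t3}. Write $G:=\sum_j\|\nabla u_j\|^2$, $X:=\sum_j\|xu_j\|^2$, $M:=\sum_j\|u_j\|^2$, $B:=\sum_{j,k}a_{jk}\int_{\R^N}|u_ju_k|^p\,dx$. On $\{K=0\}$ one has $B=\frac{2p}{N(p-1)}(G-X)$, which gives your formula $\pounds^2S=\frac4N\big[(\frac2N+p-1)X-(p-1-\frac2N)G\big]$; now use the \emph{second} constraint $I\le 0$, i.e. $G-X\ge\frac{N(p-1)}{2p}(M+G+X)$, to get
\begin{equation*}
\pounds^2S({\bf u})\ \le\ \frac4N\Big[\frac{4p-N^2(p-1)^2}{2pN}\,(G+X)\ -\ \frac{N(p-1)^2}{2p}\,M\Big].
\end{equation*}
Since $p_1$ is the positive root of $N^2(p-1)^2=4p$, for $p>p_1$ the first coefficient is negative, hence $\pounds^2S<0$ on all of $\{K=0,\ I\le0\}\setminus\{0\}$, not just on the face $\{I=0\}$. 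Consequently, at any minimizer $\psi$ with $I(\psi)<0$ the relation $0=K(\psi)=\eta\,\pounds^2S(\psi)$ forces $\eta=0$, so $S'(\psi)=0$ and Pohozaev yields $I(\psi)=0$, a contradiction; thus every minimizer sits on $\{I=0\}$ and $m_{1,-\frac2N}\ge m_{1,0}$ follows (the existence of a radial minimizer uses your coercivity remark, valid for $p>p_*$). So your plan is completable in the range $p>p_1$ where the proposition is ultimately exploited for instability, but for $p_*<p\le p_1$ the sign of $\pounds^2S$ is indefinite and neither your argument nor the paper's establishes the claim.
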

\begin{proof}
Take $\phi$ a ground state solution to \eqref{E}. Then,
$K(\phi)=I(\phi)=0$ and $S(\phi)=m$. Thus, 
$$m_{1,0}\leq m_{1,-\frac2N}\leq S(\phi)=m_{1,0}.$$
\end{proof}
Now, we prove the second part of Theorem \ref{t3}.\\
With a translation argument, we assume that $t_0=0$. Thus, $S({\bf u})<m$ and with Lemma \ref{lem}, ${\bf u}(t)\in A_{1,-\frac2N}^-$ for any $t\in[0,T^*)$. By contradiction, assume that $T^*=\infty$. Take the real function $Q(t):=\sum_{j=1}^m\int_{\R^N}|x|^2|u_j(t)|^2\,dx$. Thanks to Virial identity \eqref{vrl}, we get
$$\frac18Q''(t)=\frac2NK_{1,-\frac{N}2}({\bf u(t)}).$$
We infer that there exists $\delta>0$ such that $K_{1,-\frac{2}N}({\bf u(t)})<-\delta$ for large time. Otherwise, there exists a sequence of positive real numbers $t_n\rightarrow+\infty$ such that $K_{1,\frac{-2}N}({\bf u}(t_n))\rightarrow0.$ By the definition of $m_{1,-\frac2N}$ and Lemma \ref{fn}, yields
$$m\leq(S-K_{1,-\frac{2}N})({\bf u}(t_n))=S(\Psi)-K_{1,-\frac{2}N}({\bf u}(t_n))\rightarrow S(\Psi)<m.$$
This absurdity finishes the proof of the claim. Thus $Q''<-8\delta$. Integrating twice, $Q$ becomes negative for some positive time. This contradiction closes the proof.
\section{Strong instability}
This section is devoted to prove Theorem \ref{t3} about strong instability of standing waves. We keep notations of thr previous section, namely, $K:=K_{1,-\frac2N}$ and $I:=K_{1,0}$.
\begin{lem}\label{cle}
Let ${\bf v}\in H$ such that $I({\bf v})\leq0$ and $K({\bf v})\leq0$. Then, for any $\lambda>1$,
\begin{enumerate}
\item
$\frac\partial{\partial\lambda}S({\bf v}_\lambda)=\frac N{2\lambda}K({\bf v}_\lambda)$;
\item
$K({\bf v}_{\lambda})<0$, when $\lambda$ is close to one.
\end{enumerate}
\end{lem}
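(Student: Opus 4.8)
The plan is to realize $\lambda\mapsto{\bf v}_\lambda$ as the mass-preserving dilation ${\bf v}_\lambda(x):=\lambda^{N/2}{\bf v}(\lambda x)$, which fixes ${\bf v}$ at $\lambda=1$, and to reduce both assertions to elementary one-variable calculus. Writing $A:=\sum_j\|\nabla v_j\|^2$, $B:=\sum_j\|xv_j\|^2$, $D:=\sum_j\|v_j\|^2$ and $C:=\sum_{j,k}a_{jk}\int_{\R^N}|v_jv_k|^p\,dx$, the scaling rules $\|\nabla v_{j,\lambda}\|^2=\lambda^2\|\nabla v_j\|^2$, $\|xv_{j,\lambda}\|^2=\lambda^{-2}\|xv_j\|^2$, $\|v_{j,\lambda}\|^2=\|v_j\|^2$ and $\int|v_{j,\lambda}v_{k,\lambda}|^p\,dx=\lambda^{N(p-1)}\int|v_jv_k|^p\,dx$ give closed expressions for $S({\bf v}_\lambda)$ and for $K({\bf v}_\lambda)=K_{1,-\frac2N}({\bf v}_\lambda)$ as explicit functions of $\lambda$.

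First I would prove (1) by direct differentiation: the formulas above give $S({\bf v}_\lambda)=\frac12(D+\lambda^2A+\lambda^{-2}B)-\frac1{2p}\lambda^{N(p-1)}C$, hence $\frac{\partial}{\partial\lambda}S({\bf v}_\lambda)=\lambda A-\lambda^{-3}B-\frac{N(p-1)}{2p}\lambda^{N(p-1)-1}C$. On the other hand $K({\bf v}_\lambda)=\frac2N(\lambda^2A-\lambda^{-2}B)-\frac{p-1}p\lambda^{N(p-1)}C$, and multiplying this by $\frac N{2\lambda}$ reproduces the previous line verbatim. This settles (1) for every $\lambda>0$, in particular for $\lambda>1$.

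For (2) I would split according to the sign of $K({\bf v})=K({\bf v}_1)$. If $K({\bf v})<0$, then by continuity of $\lambda\mapsto K({\bf v}_\lambda)$ the inequality $K({\bf v}_\lambda)<0$ persists for $\lambda$ near $1$, and we are done. The delicate case is $K({\bf v})=0$, where I would examine the derivative at $\lambda=1$, namely $\frac{\partial}{\partial\lambda}K({\bf v}_\lambda)_{|\lambda=1}=\frac4N(A+B)-\frac{N(p-1)^2}pC$. Using $K({\bf v})=0$ to eliminate $C=\frac{2p}{N(p-1)}(A-B)$ turns this into $\frac4N(A+B)-2(p-1)(A-B)$, and then the second hypothesis $I({\bf v})\le0$ — which reads $A+B+D\le C$ — together with $D\ge0$ and $r:=\frac{2p}{N(p-1)}>1$ (valid since $p<p^*$) yields the key bound $B\le\frac{r-1}{r+1}A$.

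Substituting this bound, legitimate because the coefficient of $B$ is positive, collapses the derivative to $\frac{\partial}{\partial\lambda}K({\bf v}_\lambda)_{|\lambda=1}\le\frac4{r+1}\big(\frac{2r}N-(p-1)\big)A$, and a direct simplification shows that $\frac{2r}N-(p-1)<0$ is equivalent to $N^2(p-1)^2>4p$, i.e. to $p>p_1=1+\frac2{N^2}(1+\sqrt{1+N^2})$. Since $A>0$ for ${\bf v}\neq0$, the derivative is then strictly negative, so $K({\bf v}_\lambda)<0$ for $\lambda>1$ close to $1$. The main obstacle is precisely this last step: the threshold $p_1$ is forced by requiring the reduced derivative to be negative, so the whole argument hinges on extracting the sharp bound on $B$ from $I({\bf v})\le0$ and feeding it into an expression whose sign is not obvious a priori. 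I would also record the implicit standing assumption ${\bf v}\neq0$, without which $A=0$ and the strict inequality fails.
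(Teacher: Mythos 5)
Your proof is correct, and part (1) is exactly the paper's computation. For part (2), though, your organization is genuinely different from the paper's. The paper makes no case distinction: it feeds both hypotheses $K({\bf v})\le0$ and $I({\bf v})\le0$ into the scaled functional to obtain, for all $\lambda>1$ (in your notation $A,B,C,D$),
\begin{equation*}
\frac N2K({\bf v}_\lambda)\le\frac1{2\lambda^2}\Big[(1-\lambda^4)D+f(\lambda)C\Big],\qquad f(\lambda):=(\lambda^4-1)\Big(1+\frac{N(p-1)}{2p}\Big)-\frac{N(p-1)}p\big(\lambda^{N(p-1)+2}-1\big),
\end{equation*}
and then concludes from $f(1)=0$ and $f'(1)=4-px^2<0$, $x:=N(1-1/p)$, which is negative precisely when $N^2(p-1)^2>4p$, i.e. $p>p_1$ --- the same threshold computation you perform. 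You instead split on the sign of $K({\bf v})$: the strict case is dismissed by continuity, and only at the borderline $K({\bf v})=0$ do you do calculus, differentiating $K({\bf v}_\lambda)$ itself at $\lambda=1$, eliminating the nonlinear term $C$ through the constraint and controlling $B$ by $A$ through $I({\bf v})\le0$. In substance both are first-order sign arguments at $\lambda=1$ yielding the same exponent $p_1$, but the decompositions differ: your version isolates where the hypotheses genuinely act (only the case $K=0$ is delicate) and makes the origin of $p_1$ transparent, while the paper's version produces a single uniform upper bound for $K({\bf v}_\lambda)$ valid without case analysis, at the cost of the somewhat opaque auxiliary function $f$. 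Your closing remark is also a genuine observation: the lemma implicitly requires ${\bf v}\neq0$ (otherwise $K({\bf v}_\lambda)\equiv0$ and the strict inequality fails); the paper leaves this unstated, its own proof likewise needing $C>0$ (or $D>0$) for strict negativity.
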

\begin{proof}
\begin{enumerate}
\item
Compute
\begin{eqnarray*}
\lambda\frac\partial{\partial\lambda}S({\bf v}_\lambda)
&=&\frac\lambda2\sum_{j=1}^m\frac\partial{\partial\lambda}\Big(\lambda^2\|\nabla v_j\|^2+\|v\|^2+\lambda^{-2}\|x v_j\|^2-\frac1{p}\sum_{k=1}^ma_{j,k}\lambda^{N(p-1)}\int|v_jv_k|^p\,dx\Big)\\
&=&\sum_{j=1}^m\Big(\lambda^2\|\nabla v_j\|^2-\lambda^{-2}\|x v_j\|^2-\frac{N(p-1)}{2p}\sum_{k=1}^ma_{j,k}\lambda^{N(p-1)}\int|v_jv_k|^p\,dx\Big)\\
&=&\frac N2K({\bf v}_\lambda).
\end{eqnarray*}
\item
We have $I({\bf v})\leq0$ and $K({\bf v})\leq 0$, then
\begin{gather*}
\sum_{j=1}^m\|\nabla v_j\|^2\leq\sum_{j=1}^m\|x v_j\|^2+\frac{N(p-1)}{2p}\displaystyle\sum_{j,k=1}^m a_{jk}\int_{\R^N}|v_j v_k|^{p}\,dx;\\
\sum_{j=1}^m\| v_j\|_{\Sigma}^2\leq\sum_{j,k=1}^m a_{jk}\int_{\R^N}|v_j v_k|^{p}\,dx;\nonumber\\
2\sum_{j=1}^m\|\nabla v_j\|^2\leq(1+\frac{N(p-1)}{2p})\sum_{j,k=1}^m a_{jk}\int_{\R^N}|v_j v_k|^{p}\,dx-\sum_{j=1}^m\|v_j\|^2.
\end{gather*}
Moreover
\begin{eqnarray*}
\frac N2K({\bf v}_\lambda)
&=&\sum_{j=1}^m\Big(\lambda^2\|\nabla v_j\|^2-\lambda^{-2}\|x v_j\|^2-\frac{N(p-1)}{2p}\sum_{k=1}^ma_{j,k}\lambda^{N(p-1)}\int|v_jv_k|^p\,dx\Big)\\
&\leq&\frac1{\lambda^2}\sum_{j=1}^m\Big((\lambda^4-1)\|\nabla v_j\|^2-\frac{N(p-1)}{2p}(\lambda^{N(p-1)+2}-1)\sum_{k=1}^ma_{j,k}\int|v_jv_k|^p\,dx\Big)\\
&\leq&\frac1{2\lambda^2}\sum_{j=1}^m\Big((-\lambda^4+1)\|v_j\|^2+f(\lambda)\sum_{k=1}^ma_{j,k}\int|v_jv_k|^p\,dx\Big).
\end{eqnarray*}
We took the real function defined on $(1,\infty)$ by
\begin{eqnarray*}
f(\lambda)
&:=&(\lambda^4-1)(1+\frac{N(p-1)}{2p})-\frac{N(p-1)}{p}(\lambda^{N(p-1)+2}-1).
\end{eqnarray*}
Then, the derivative satisfies when $r$ tends to one
\begin{eqnarray*}
f'(\lambda)
&=&4\lambda^3(1+\frac{N(p-1)}{2p})-(N(p-1)+2)\frac{N(p-1)}{p}\lambda^{N(p-1)+1}.
\end{eqnarray*}
So, for $x:=N(1-\frac1p)$, we get
\begin{eqnarray*}
f'(1)
&=&4(1+\frac{N(p-1)}{2p})-(N(p-1)+2)\frac{N(p-1)}{p}\\
&=&4+2x-(px+2)x\\
&=&4-px^2.
\end{eqnarray*}
This implies, via the fact that $p>p_1:=1+\frac2{N^2}(1+\sqrt{1+N^2})$, $f$ is decreasing near to one. Since $f(1)=0$, we get $f<0$ near to one. The proof of the second point of the Lemma is finished.
\end{enumerate}
\end{proof}
The next intermediate result reads as follows.
\begin{lem}
Let $\Psi$ to be a ground state solution of \eqref{E}, $\lambda>1$ a real number close to one and ${\bf u}_\lambda$ the solution to \eqref{S} with data $\Psi_{\lambda}:=\lambda^{\frac{N}2}\Psi(\lambda.)$. Then, for any $t\in(0,T^*)$,
$$S({\bf u}_\lambda(t))<S(\Psi)\quad\mbox{and}\quad K_{1,-\frac2N}({\bf u}_\lambda(t))<0.$$
\end{lem}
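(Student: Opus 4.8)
The plan is to prove the two inequalities first at the initial time $t=0$ for the rescaled datum $\Psi_\lambda=\lambda^{\frac N2}\Psi(\lambda\cdot)$, and then to propagate them to every $t\in(0,T^*)$ by means of a conservation law and the invariance of the set $A_{1,-\frac2N}^-$.

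First I would record the properties of the ground state. Since $\Psi$ solves \eqref{E}, the generalized Pohozaev identity gives $K_{\alpha,\beta}(\Psi)=0$ for every $(\alpha,\beta)$; in particular $K(\Psi)=K_{1,-\frac2N}(\Psi)=0$ and $I(\Psi)=K_{1,0}(\Psi)=0$, while $S(\Psi)=m=m_{1,-\frac2N}$ by the definition of a ground state together with Proposition \ref{tgss}. Thus $\Psi$ satisfies the hypotheses $I(\Psi)\leq0$ and $K(\Psi)\leq0$ of Lemma \ref{cle}. Applying Lemma \ref{cle}(2) yields $K(\Psi_\lambda)<0$ for $\lambda>1$ close to one, hence $K(\Psi_s)<0$ for all $s\in(1,\lambda]$. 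For the action I would use the identity of Lemma \ref{cle}(1), namely $\frac{\partial}{\partial s}S(\Psi_s)=\frac{N}{2s}K(\Psi_s)$, which is strictly negative on $(1,\lambda]$ by the previous step; integrating from $1$ to $\lambda$ and using $S(\Psi_1)=S(\Psi)$ gives $S(\Psi_\lambda)<S(\Psi)$.

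Next I would propagate in time. The action splits as $S({\bf u})=E({\bf u})+\frac12\sum_{j=1}^m M(u_j)$, so conservation of the energy and of each mass (Theorem \ref{existence}) shows that $S$ is conserved along the flow. Consequently $S({\bf u}_\lambda(t))=S(\Psi_\lambda)<S(\Psi)=m$ for every $t\in(0,T^*)$, which is the first assertion. Combining $S({\bf u}_\lambda(0))<m$ with $K({\bf u}_\lambda(0))=K(\Psi_\lambda)<0$ shows $\Psi_\lambda\in A_{1,-\frac2N}^-$; by the invariance of this set under the flow (Lemma \ref{lem}, together with Lemma \ref{fn} which legitimates the parameter choice $(1,-\frac2N)$), the solution remains in it, so ${\bf u}_\lambda(t)\in A_{1,-\frac2N}^-$ for all $t\in[0,T^*)$, and therefore $K({\bf u}_\lambda(t))<0$, the second assertion.

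The genuinely delicate points lie in the initial-time estimates rather than in the propagation. The inequality $K(\Psi_\lambda)<0$ for $\lambda>1$ near one is exactly Lemma \ref{cle}(2), whose proof hinges on the sign of $f'(1)=4-px^2$ and hence on the threshold $p>p_1$; this is where the hypothesis on $p$ enters the argument. The strict decrease $S(\Psi_\lambda)<S(\Psi)$ then follows from Lemma \ref{cle}(1), but I would emphasize that the derivative vanishes at $\lambda=1$ because $K(\Psi)=0$, so the conclusion must come from integrating the strictly negative derivative over $(1,\lambda]$ rather than from a first-order expansion at $\lambda=1$. Once these two facts are secured, the time propagation is immediate from the conservation of $S$ and the invariance of $A_{1,-\frac2N}^-$, requiring no further estimates.
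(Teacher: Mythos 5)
Your proof is correct and follows essentially the same route as the paper: initial-time estimates $S(\Psi_\lambda)<S(\Psi)$ and $K_{1,-\frac2N}(\Psi_\lambda)<0$ via Lemma \ref{cle}, conservation of $S$ for the first inequality, and a continuity argument for the persistence of the sign of $K_{1,-\frac2N}$. The only cosmetic difference is that you invoke the invariance of $A_{1,-\frac2N}^-$ (Lemma \ref{lem}) where the paper runs the underlying continuity argument inline, noting that $K_{1,-\frac2N}({\bf u}_\lambda(t))$ cannot vanish while $S({\bf u}_\lambda(t))$ stays below the ground-state level $m=S(\Psi)$.
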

\begin{proof}
By Lemma \ref{cle}, we have  
$$S(\Psi_{\lambda})<S(\Psi)\quad\mbox{and}\quad K_{1,-\frac2N}(\Psi_{\lambda})<0.$$ 
Moreover, thanks to the conservation laws, it follows that for any $t>0$, 
$$S({\bf u}_\lambda(t))=S(\Psi_{\lambda}(t))<S(\Psi).$$
Then $K_{1,-\frac2N}({\bf u}_\lambda(t))\neq0$ because $\Psi$ is a ground state. Finally, with a continuity argument $K_{1,-\frac2N}({\bf u}_\lambda(t))<0$.
\end{proof}
Now, we are ready to prove the instability result.\\
{\bf Proof of Theorem \ref{t3}}.
Take ${\bf u}_{\lambda}\in C_{T^*}(H)$ the maximal solution to \eqref{S} with data $\Psi_{\lambda}$, where $\lambda>1$ is close to one and $\Psi$ is a ground state solution to \eqref{E}. With the previous Lemma, we get 
$${\bf u}_\lambda(t)\in A_{1,-\frac2N}^-, \quad\mbox{for any}\quad t\in(0,T^*).$$
Then, using Theorem \ref{t2}, it follows that
$$\lim_{t\rightarrow T^*}\|{\bf u}_\lambda(t)\|_H=\infty.$$
The proof is finished via the fact that 
$$\lim_{\lambda\rightarrow1}\|\Psi_\lambda-\Psi \|_H=0.$$
\section{Appendix}
We give a proof of Proposition \ref{vir} about Virial identity.\\
Let ${\bf u}\in H$, a solution to \eqref{S} such that $x{\bf u}\in L^2$. Denote the quantity
$$V(t):=\displaystyle\sum_{j=1}^m\|xu_j(t)\|^2.$$
Multiplying the equation \eqref{S} by $2u_j$ and examining the imaginary parts, 
$$\partial_t (|u_j|^2) =-2\Im(\bar u_j \Delta u_j).$$
Thus, for $a(x):=|x|^2$, we get
\begin{eqnarray*}
V'(t)
&=&-2\displaystyle\sum_{j=1}^m\int_{\R^N}|x|^2\Im(\bar u_j \Delta u_j)\,dx\\
&=&4\displaystyle\sum_{j=1}^m\Im\int_{\R^N}(x.\nabla u_j)\bar u_j \,dx\\
&=&2\displaystyle\sum_{j=1}^m\Im\int_{\R^N}(\partial_ka\partial_ku_j)\bar u_j \,dx.
\end{eqnarray*}
Compute, for $g$ the nonlinearity in \eqref{S},
\begin{eqnarray*}
\partial_t\Im(\partial_k u_j\bar u_j)
&=&\Im(\partial_k\dot u_j\bar u_j)+\Im(\partial_k u_j\bar{\dot u}_j)\\
&=&\Re(i\dot u_j\partial_k\bar u_j)-\Re(i\partial_k \dot u_j\bar{u}_j)\\
&=&\Re(\partial_k\bar u_j(-\Delta u_j+|x|^2u_j-g({\bf u})))-\Re(\bar u_j\partial_k(-\Delta u_j+|x|^2u_j-g({\bf u})))\\
&=&\Re(\bar u_j\partial_k\Delta u_j-\partial_k\bar u_j\Delta u_j)-\Re(\bar u_j\partial_k(|x|^2u_j)-\partial_k\bar u_j|x|^2u_j)+\Re(\bar u_j\partial_kg({\bf u})-\partial_k\bar u_jg({\bf u})).
\end{eqnarray*}
Recall the identity
$$\frac12\partial_k\Delta(|u_j|^2)-2\partial_l\Re(\partial_{k}u_j\partial_l\bar u_j)=\Re(\bar u_j\partial_k\Delta u_j-\partial_k\bar u_j\Delta u_j).$$
Then,
\begin{eqnarray*}
\int_{\R^N}\partial_ka\Re(\bar u_j\partial_k\Delta u_j-\partial_k\bar u_j\Delta u_j)\,dx
&=&\int_{\R^N}\partial_ka\Big(\frac12\partial_k\Delta(|u_j|^2)-2\partial_l\Re(\partial_ku_j\partial_l\bar u_j)\Big)\,dx\\
&=&2\int_{\R^N}\partial_l\partial_ka\Re(\partial_ku_j\partial_l\bar u_j)\,dx\\
&=&4\|\nabla u_j\|^2.
\end{eqnarray*}
Moreover,
\begin{eqnarray*}
\int_{\R^N}\partial_ka\Re(\bar u_j\partial_k(au_j)-\partial_k\bar u_jau_j)\,dx
&=&\int_{\R^N}(\partial_ka)^2|u_j|^2\,dx\\
&=&4\|x u_j\|^2.
\end{eqnarray*}
On the other hand
\begin{eqnarray*}
\int_{\R^N}\partial_ka\Re(\bar u_j\partial_kg({\bf u})-\partial_k\bar u_jg({\bf u}))\,dx
&=&\int_{\R^N}\partial_ka\Re(\partial_k[\bar u_jg({\bf u})]-2\partial_k\bar u_jg({\bf u}))\,dx\\
&=&-\int_{\R^N}\Big(\Delta a\bar u_jg({\bf u})-2\Re(\partial_ka\partial_k\bar u_jg({\bf u}))\Big)\,dx\\
&=&-2N\displaystyle\sum_{k=1}^{m}a_{jk}\int_{\R^N}|u_ku_j|^p\,dx-2\int_{\R^N}\partial_ka\Re(\partial_k\bar u_jg({\bf u}))\,dx.
\end{eqnarray*}
Write
\begin{eqnarray*}
\Re(\partial_k\bar u_jg({\bf u}))
&=&\displaystyle\sum_{l=1}^{m}a_{jl}\Re(\partial_k\bar u_j|u_l|^p|u_j|^{p-2}u_j)\\
&=&\frac1p\displaystyle\sum_{l=1}^{m}a_{jl}\partial_k(|u_j|^p)|u_l|^p.
\end{eqnarray*}
Then
\begin{eqnarray*}
\displaystyle\sum_{j=1}^{m}\Re(\partial_k\bar u_jg({\bf u}))
&=&\frac1p\displaystyle\sum_{j,l=1}^{m}a_{jl}\partial_k(|u_j|^p)|u_l|^p\\
&=&\frac1{2p}\displaystyle\sum_{j,l=1}^{m}a_{jl}\partial_k(|u_ju_l|^p).
\end{eqnarray*}
Finally
\begin{eqnarray*}
\frac12V''(t)
&=&4\displaystyle\sum_{j=1}^{m}(\|\nabla u_j\|^2-\|xu_j\|^2)-2N(1-\frac1p)\displaystyle\sum_{j,l=1}^{m}a_{jl}\int_{\R^N}|u_ju_l|^p\,dx.
\end{eqnarray*}


\end{document}